\documentclass[twoside, a4paper,12pt]{amsart}

\usepackage{tikz}
\usepackage{amsfonts}
\usepackage{amssymb}
\usepackage{array}
\newcolumntype{P}[1]{>{\centering\arraybackslash}p{#1}}
\usepackage[hmarginratio=1:1]{geometry}

\theoremstyle{plain}
\newtheorem*{claim*}{Claim}
\newtheorem{thm}{Theorem}[section]
\newtheorem{corollary}[thm]{Corollary}
\newtheorem{lemma}[thm]{Lemma}
\newtheorem{prop}[thm]{Proposition}
\theoremstyle{definition}
\newtheorem{defn}[thm]{Definition}
\newtheorem{ex}[thm]{Example}

\newtheorem{remark}[thm]{Remark}

\newtheorem{prob}[thm]{Open Problem}
\setlength{\parindent}{1em}
\pagestyle{plain}

\begin{document}
\subjclass[2010]{20M10, 20M30}
\title{\large{Semigroups for which every right congruence of finite index is finitely generated}}
\author{Craig Miller}
\address{School of Mathematics and Statistics, St Andrews, Scotland, UK, KY16 NSS}
\email{cm380@st-andrews.ac.uk}

\begin{abstract}
We call a semigroup $S$ {\em f-noetherian} if every right congruence of finite index on $S$ is finitely generated.
We prove that every finitely generated semigroup is f-noetherian, 
and investigate whether the properties of being f-noetherian and being finitely generated coincide for various semigroup classes.
\end{abstract}

\maketitle 
\section{\large{Introduction}\nopunct}

A {\em finiteness condition} for a class of universal algebras is a property that is satisfied by at least all finite members of that class.
The study of algebras via their finiteness conditions has been of significant importance in understanding the structure and behaviour of various kinds of algebraic structures.
In recent decades there has been a considerable amount of interest in finiteness conditions on semigroups concerning their lattices of one-sided congruences.
One-sided congruences play a key role in the theory of monoid acts (or, equivalently, representations of monoids by transformations of sets).  
Indeed, the cyclic right (resp. left) acts of a monoid $M$ correspond to the right (resp. left) congruences on $M.$
This means that finiteness conditions on monoids relating to their actions usually have equivalent formulations in terms of one-sided congruences; 
for example, the notions of {\em coherency} \cite{Gould} and being {\em noetherian} \cite{Normak}.\par
Recently, in \cite{Dandan}, Dandan, Gould, Quinn-Gregson and Zenab considered the property that the universal right congruence on a semigroup is finitely generated; 
we refer to such semigroups as {\em finitely connected} (see Remark \ref{fc remark} for the motivation behind using this term).
Semigroups for which every right congruence is finitely generated, called {\em right noetherian} semigroups, 
have received a fair amount of attention (see \cite{Hotzel, Kozhukhov1, Kozhukhov2, Miller3}).
Both of these properties have an interesting relationship to finite generation.
It was proved in \cite{Dandan} that every finitely generated semigroup is finitely connected.
The authors also prove there that finitely connected completely simple semigroups are finitely generated \cite[Cor. 3.6]{Dandan}.
However, an important strand of that paper is demostrating that, for general semigroups, the property of being finitely connected is substantially weaker than being finitely generated.
For example, every monoid with a zero is finitely connected \cite[Corollary 2.15]{Dandan}.\par
The situation is rather different for the property of being right noetherian.
It is easy to find finitely generated semigroups that are not right noetherian; e.g. the free monoid on two generators.
It is well known that a commutative semigroup is (right) noetherian if and only if it is finitely generated.
An intriguing open problem is whether every right noetherian semigroup $S$ is finitely generated.
This question was originally asked by Hotzel in \cite{Hotzel}, and has been considered in several subsequent papers \cite{Bergman, Kozhukhov2, Miller3}.
It has been answered in the affirmative in the following cases:
if $S$ is weakly periodic (which inludes semisimple, and hence regular, semigroups) \cite[Theorem 3.2]{Hotzel};
if every $\mathcal{J}$-class of $S$ is a class of some congruence on $S$ \cite[Theorem 3.3]{Hotzel};
if $S$ is also left noetherian \cite[Theorem 5]{Kozhukhov2}.\par
In this paper we consider the property that every right congruence of finite index on a semigroup is finitely generated, which we call being {\em f-noetherian}.
We will focus on its relationship with finite generation.
Our condition of being f-noetherian is clearly `between' the properties of being finitely connected and being right noetherian:
right noetherian semigroups are f-noetherian, and f-noetherian semigroups are finitely connected.
It will be shown that every finitely generated semigroup is f-noetherian.
The main purpose of the article is to investigate, for various semigroup classes, whether the property of being f-noetherian coincides with finite generation,
and to see how the situation differs for each of the properties of being finitely connected and being right noetherian.
Specifically, we consider the following classes of semigroups: 
inverse, completely regular, idempotent, commutative, cancellative, nilpotent, archimedean, complete, ($0$-)simple and completely ($0$-)simple.
Our findings are summarised in Table 1.
Particularly noteworthy results are obtained regarding commutative semigroups.  
In particular, we exhibit two examples of f-noetherian commutative semigroups that are not finitely generated, one of which is cancellative, idempotent-free and countable, and the other uncountable.
Furthermore, we prove that being countable is a necessary condition for a cancellative commutative semigroup to be f-noetherian.\par 
The paper is structured as follows.  
In Section 2 we introduce the foundational material required for the rest of the paper.
This section is divided into two subsections.  
In the first of these we make some basic definitions and observations, and provide alternative formulations of the property of being f-noetherian for groups and monoids.
In the second subsection we establish some useful algebraic properties of f-noetherian semigroups.
In Section 3 we prove that certain infinite transformation semigroups are f-noetherian.
We then turn to the main theme of the paper in Section 4.  Here we prove that every finitely generated semigroup is f-noetherian.
In Sections 5-7 we investigate whether the converse holds for semigroups within various `standard' classes.
Section 5 is concerned with completely regular semigroups, Section 6 is devoted to commutative semigroups, and simple and $0$-simple semigroups are considered in Section 7.
Finally, in Section 8, we pose some open problems and directions for future research.

\begin{table}
\footnotesize
\caption{Summary of results\label{long}}
\begin{tabular}{|P{2.5cm}||P{2.5cm}|P{2.5cm}|P{2.5cm}|P{2.5cm}|}
 \hline
 \textbf{Class of semigroup $S$} & \textbf{$S$ finitely connected $\Leftrightarrow S$ f.g.} & \textbf{$S$ f-noetherian $\Leftrightarrow S$ f.g.} & 
 \textbf{$S$ right noetherian $\Rightarrow S$ f.g.} & \textbf{$S$ f.g $\Rightarrow S$ right noetherian}\\
 \hline\hline
 \textbf{Inverse} & No & No & Yes & No\\
  & \cite[Cor. 5.7]{Dandan} & (Example \ref{symmetric inverse}) & \cite[Thm. 3.2]{Hotzel} & (Remark \ref{noetherian group})\\
 \hline
 \textbf{Completely} & No & ??? & Yes & No\\
 \textbf{regular} & \cite[Cor. 5.7]{Dandan} & & \cite[Thm. 3.2]{Hotzel} & (Remark \ref{noetherian group})\\
 \hline
 \textbf{Cryptogroup} & No & Yes & Yes & No\\
  & \cite[Cor. 5.7]{Dandan} & (Thm. \ref{cryptogroup}) & \cite[Thm. 3.2]{Hotzel} & (Remark \ref{noetherian group})\\
 \hline
 \textbf{Band} & No & Yes & Yes & Yes\\
  & \cite[Cor. 5.7]{Dandan} & (Cor. \ref{band}) & \cite[Thm. 3.2]{Hotzel} & (f.g. $\Rightarrow$ finite)\\
 \hline
 \textbf{Commutative} & No & No & Yes & Yes\\
  & \cite[Cor. 5.7]{Dandan} & (Example \ref{commutative ex}) & \cite{Redei} & \cite{Budach}\\
 \hline
 \textbf{Cancellative} & No & No & ??? & No\\
  & (Example \ref{commutative ex}) & (Example \ref{commutative ex}) & & (Remark \ref{noetherian group})\\
 \hline 
 \textbf{Nilpotent} & Yes & Yes & Yes & Yes\\
  & (Lemma \ref{nilpotent}) & (Lemma \ref{nilpotent}) & \cite[Thm. 3.2]{Hotzel} & (f.g. $\Rightarrow$ finite)\\
 \hline
 \textbf{Archimedean} & Yes & Yes & Yes & Yes\\
  & (Prop. \ref{archimedean}) & (Prop. \ref{archimedean}) & \cite{Redei} & \cite{Budach}\\
 \hline
 \textbf{Complete} & No & Yes & Yes & Yes\\
  & \cite[Cor. 5.7]{Dandan} & (Prop. \ref{complete}) & \cite{Redei} & \cite{Budach}\\
 \hline
 \textbf{Left simple} & Yes & Yes & Yes & No\\
  & (Remark \ref{left simple}) & (Remark \ref{left simple}) & \cite[Thm. 3.2]{Hotzel} & (Remark \ref{noetherian group})\\
 \hline
 \textbf{Right simple} & No & No & Yes & No\\
  & (Thm. \ref{Baer-Levi}) & (Thm. \ref{Baer-Levi}) & \cite[Thm. 3.2]{Hotzel} & (Remark \ref{noetherian group})\\
 \hline
 \textbf{Left} & No & Yes & Yes & No\\
 \textbf{$0$-simple} & (Remark \ref{0-simple}(2)) & (Prop. \ref{L-classes}) & \cite[Thm. 3.2]{Hotzel} & (Remark \ref{0-simple}(3))\\
 \hline
 \textbf{Right} & No & No & Yes & No\\
 \textbf{$0$-simple} & (Remark \ref{0-simple}(1)) & (Remark \ref{0-simple}(1)) & \cite[Thm. 3.2]{Hotzel} & (Remark \ref{0-simple}(3))\\
 \hline
 \textbf{Completely} & Yes & Yes & Yes & No\\
 \textbf{simple} & \cite[Cor. 3.6]{Dandan} & (Thm. \ref{completely simple}) & \cite[Thm. 3.2]{Hotzel} & (Remark \ref{noetherian group})\\
 \hline
 \textbf{Completely} & No & Yes & Yes & No\\
 \textbf{$0$-simple} & \cite[Cor. 6.10]{Dandan} & (Thm. \ref{completely 0-simple}) & \cite[Thm. 3.2]{Hotzel} & (Remark \ref{0-simple}(3))\\
 \hline
\end{tabular}
\end{table}

\section{\large{Fundamentals}\nopunct}

\subsection{Definition and equivalent formulations}\hfill\par

The {\em index} of an equivalence relation $\rho$ is the cardinality of the set of equivalence classes of $\rho.$
We call a semigroup $S$ {\em f-noetherian} if every right congruence of finite index on $S$ is finitely generated.\par

Now, let $S$ be a semigroup and let $X\subseteq S\times S$.  We introduce the notation
$$\overline{X}=X\cup\{(x, y) : (y, x)\in X\}.$$
For $a, b\in S$, an {\em $X$-sequence connecting} $a$ and $b$ is any sequence
$$a=x_1s_1, \; y_1s_1=x_2s_2, \; y_2s_2=x_3s_3, \; \dots, \; y_ks_k=b,$$
where $(x_i, y_i)\in\overline{X}$ and $s_i\in S^1$ for $1\leq i\leq k.$\par
We now make the following definition.

\begin{defn}
Let $S$ be a semigroup, let $X\subseteq S\times S,$ and let $s, t\in S.$
We say that $(s, t)$ is a {\em consequence} of $X$ if either $s=t$ or there exists an $X$-sequence connecting $s$ and $t.$
\end{defn}

We have the following basic lemma.

\begin{lemma}
Let $S$ be a semigroup, let $X\subseteq S\times S,$ let $\rho$ be a right congruence on $S$ generated by $X,$ and let $s, t\in S$.
Then $s\,\rho\,t$ if and only if $(s, t)$ is a consequence of $X$.
\end{lemma}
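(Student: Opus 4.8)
The plan is to fix the relation
$\sigma=\{(s,t)\in S\times S : (s,t)\text{ is a consequence of }X\}$
and to prove the lemma by establishing that $\sigma=\rho$. I would do this through the two inclusions $\rho\subseteq\sigma$ and $\sigma\subseteq\rho$, exploiting the fact that $\rho$, being the right congruence generated by $X$, is the smallest right congruence on $S$ containing $X$.

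For $\rho\subseteq\sigma$ it suffices to check that $\sigma$ is itself a right congruence containing $X$; minimality of $\rho$ then forces $\rho\subseteq\sigma$, which is precisely the implication $s\,\rho\,t\Rightarrow(s,t)\text{ is a consequence of }X$. Reflexivity of $\sigma$ is immediate from the clause $s=t$ in the definition of consequence, and $X\subseteq\sigma$ follows by taking, for $(x,y)\in X$, the length-one $X$-sequence with $x_1=x$, $y_1=y$ and $s_1=1$. Right compatibility is obtained by right-multiplying each $s_i$ in an $X$-sequence by a fixed $w\in S$: since $s_iw\in S\subseteq S^1$, the resulting sequence connecting $sw$ and $tw$ is again an $X$-sequence. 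Transitivity is handled by concatenating two $X$-sequences, the cases $s=t$ or $t=u$ being trivial. The one step that genuinely requires the passage to the symmetric closure is symmetry: given an $X$-sequence connecting $s$ and $t$, reading it in reverse and replacing each pair $(x_i,y_i)$ by $(y_i,x_i)$—which again lies in $\overline{X}$ by the very definition of $\overline{X}$—yields an $X$-sequence connecting $t$ and $s$.

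For the reverse inclusion $\sigma\subseteq\rho$, giving $(s,t)\text{ is a consequence of }X\Rightarrow s\,\rho\,t$, I would argue directly. If $s=t$ then $s\,\rho\,t$ by reflexivity of $\rho$. Otherwise fix an $X$-sequence connecting $s$ and $t$. Each pair $(x_i,y_i)\in\overline{X}$ satisfies $x_i\,\rho\,y_i$, since $X\subseteq\rho$ and $\rho$ is symmetric; right compatibility of $\rho$ then gives $x_is_i\,\rho\,y_is_i$ for every $i$. Chaining these relations along the sequence $s=x_1s_1\,\rho\,y_1s_1=x_2s_2\,\rho\,\cdots=y_ks_k=t$ and appealing to transitivity of $\rho$ yields $s\,\rho\,t$. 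The entire argument is routine bookkeeping; the only load-bearing observation is the symmetry of $\sigma$, which is exactly why the definition of an $X$-sequence is framed in terms of $\overline{X}$ rather than $X$.
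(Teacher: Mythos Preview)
Your argument is correct and is the standard one. Note, however, that the paper does not actually supply a proof of this lemma: it is stated as a ``basic lemma'' and left unproved, the characterisation of generated right congruences via $X$-sequences being well known. So there is nothing in the paper to compare against beyond the implicit appeal to this standard fact, which your write-up spells out in full.
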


\begin{defn}
Let $S$ be a semigroup.  We say that $S$ is {\em finitely connected} if the universal relation is finitely generated as a right congruence.
\end{defn}

\begin{remark}
\label{fc remark}
The motivation for using the term {\em finitely connected} is \cite[Proposition 3.6]{Dandan}, 
which states that the universal right congruence on a semigroup $S$ is finitely generated if and only if 
there exists a finite set $X\subseteq S$ such that $S=XS^1$ and the undirected left Cayley graph of $S^1$ with respect to $X$ is connected; see \cite[Definition 3.5]{Dandan} for more information.
\end{remark}

It is obvious that every f-noetherian semigroup is finitely connected.  
In fact, the existence of any finitely generated right congruence of finite index implies that the universal right congruence is finitely generated. 

\begin{lemma}
Let $S$ be a semigroup, and let $\rho$ and $\sigma$ be two right congruences of finite index on $S$ with $\rho\subseteq\sigma.$
If $\rho$ is finitely generated, then so is $\sigma.$
\end{lemma}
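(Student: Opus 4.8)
The plan is to exploit the fact that, since $\rho \subseteq \sigma$, each $\sigma$-class is a union of $\rho$-classes; thus $\sigma$ is obtained from $\rho$ by gluing together certain $\rho$-classes, and only finitely many such gluings are needed because $\rho$ has finite index. Concretely, I would take a finite generating set $Y$ for $\rho$ and enlarge it by a finite set $Z$ of pairs that identifies the $\rho$-classes lying in a common $\sigma$-class, then show that $Y \cup Z$ generates $\sigma$.

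To build $Z$, first I would use the finite index of $\rho$ to choose representatives $c_1, \dots, c_n$, one from each $\rho$-class, and set
$$Z = \{(c_i, c_j) : 1 \le i, j \le n, \ c_i \,\sigma\, c_j\}.$$
Since there are only finitely many representatives, $Z$ is finite, so $Y \cup Z$ is finite. Let $\tau$ denote the right congruence generated by $Y \cup Z$; the goal is to prove $\tau = \sigma$.

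The inclusion $\tau \subseteq \sigma$ is immediate: $Y \subseteq \rho \subseteq \sigma$ since $Y$ generates $\rho$, and $Z \subseteq \sigma$ by construction, so $\sigma$ is a right congruence containing $Y \cup Z$ and hence contains the smallest such, namely $\tau$. For the reverse inclusion, I would take $s \,\sigma\, t$ and pick the representatives $c_i, c_j$ of the $\rho$-classes of $s$ and $t$ respectively. Because $Y$ generates $\rho$ we have $\rho \subseteq \tau$, whence $s \,\tau\, c_i$ and $t \,\tau\, c_j$. Since $\rho \subseteq \sigma$, the relations $s \,\sigma\, c_i$ and $t \,\sigma\, c_j$ hold, and combined with $s \,\sigma\, t$ they give $c_i \,\sigma\, c_j$; thus $(c_i, c_j) \in Z \subseteq \tau$. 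Chaining $s \,\tau\, c_i \,\tau\, c_j \,\tau\, t$ through transitivity yields $s \,\tau\, t$, so $\sigma \subseteq \tau$. This establishes $\sigma = \tau$, and hence that $\sigma$ is finitely generated.

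This argument is essentially bookkeeping, so I do not anticipate a serious obstacle; the one point requiring care is the finiteness of $Z$, which is exactly where the hypothesis that $\rho$ has finite index is used. (Note in fact that $\rho \subseteq \sigma$ already forces $\sigma$ to have finite index, so that hypothesis is not separately needed.) The underlying structural observation worth isolating is that passing to a coarser right congruence of finite index costs only finitely many extra generators, which is what will make the f-noetherian property well-behaved under enlarging right congruences.
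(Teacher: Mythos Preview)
Your proof is correct and follows essentially the same approach as the paper: choose representatives of the finitely many $\rho$-classes, add the finitely many pairs of representatives that are $\sigma$-related to a finite generating set for $\rho$, and verify this generates $\sigma$ by routing any $\sigma$-related pair through their representatives. Your write-up is slightly more careful in that you explicitly check the containment $\tau\subseteq\sigma$, and your parenthetical remark that the finite-index hypothesis on $\sigma$ is redundant is a valid observation the paper does not make.
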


\begin{proof}
Let $C_i, i\in I,$ be the $\rho$-classes, and let $X$ be a finite generating set for $\rho.$
For each $i\in I,$ fix $\alpha_i\in C_i,$ and define a set 
$$H=\{(\alpha_i, \alpha_j) : i\neq j, \alpha_i,\sigma\,\alpha_j\}.$$
We claim that $\sigma$ is generated by $X\cup H.$
Indeed, let $a\,\sigma\,b$ with $a\neq b.$
There exist $i, j\in I$ such that $a\in C_i$ and $b\in C_j.$
Since $a\,\rho\,\alpha_i$ and $b\,\rho\,\alpha_j,$ we have that $(a, \alpha_i)$ and $(b, \alpha_j)$ are consequences of $X.$
Now, either $i=j$ or $(\alpha_i, \alpha_j)\in H,$ so $(a, b)$ is a consequence of $X\cup H.$
\end{proof}

\begin{corollary}
Let $S$ be a semigroup.  If there exists a right congruence of finite index on $S$ which is finitely generated, then $S$ is finitely connected.
\end{corollary}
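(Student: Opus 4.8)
The plan is to deduce this directly from the preceding Lemma by taking the larger congruence to be the universal relation. First I would observe that $S\times S$ is itself a right congruence on $S$: it is certainly an equivalence relation, and it is trivially right-compatible. Moreover it has a single equivalence class, namely the whole of $S$, so it has index $1$ and is in particular a right congruence of finite index.

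With that in place, I would apply the Lemma with $\sigma=S\times S$. By hypothesis there is a finitely generated right congruence $\rho$ of finite index on $S$, and clearly $\rho\subseteq S\times S$ since $S\times S$ contains every pair. Thus both $\rho$ and $S\times S$ are right congruences of finite index, $\rho$ is contained in $S\times S$, and $\rho$ is finitely generated, so all hypotheses of the Lemma are satisfied. The Lemma then gives that $S\times S$ is finitely generated as a right congruence, which is exactly the statement that $S$ is finitely connected.

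I do not expect any genuine obstacle here: the result is an immediate corollary, and the only substantive point is the initial observation that the universal relation qualifies as a right congruence of finite index, so that the Lemma can be invoked with $\sigma$ equal to it.
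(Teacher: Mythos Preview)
Your proposal is correct and is exactly the intended deduction: the paper states this result as an immediate corollary of the preceding lemma, and applying that lemma with $\sigma=S\times S$ is precisely how it is meant to follow.
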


For a group $G,$ there is an isomorphism between the lattice of right congruences on $G$ and the lattice of subgroups of $G.$
In particular, right congruences of finite index correspond to subgroups of finite index.
It is well known that in a finitely generated group $G,$ every subgroup of $G$ of finite index is also finitely generated, so we have the following fact.

\begin{lemma}
\label{group}
The following are equivalent for a group $G$:
\begin{enumerate}
\item $G$ is f-noetherian;
\item $G$ is finitely connected;
\item $G$ is finitely generated.
\end{enumerate}
\end{lemma}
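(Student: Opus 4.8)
The plan is to prove the cycle $(1)\Rightarrow(2)\Rightarrow(3)\Rightarrow(1)$, exploiting throughout the order-isomorphism between the lattice of right congruences on $G$ and the lattice of subgroups of $G$. Under this correspondence a subgroup $H$ is matched with the right congruence $\rho_H$ defined by $a\,\rho_H\,b\iff ab^{-1}\in H$, whose classes are the right cosets of $H$; in particular the index of $\rho_H$ equals the index of $H$, the universal right congruence corresponds to $H=G$, and $\rho_H\subseteq\rho_K$ precisely when $H\subseteq K$. The implication $(1)\Rightarrow(2)$ is immediate, since the universal right congruence has index $1$ and is therefore among the finite-index right congruences that f-noetherianness requires to be finitely generated. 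So the substance lies in $(2)\Rightarrow(3)$ and $(3)\Rightarrow(1)$.

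The key step, which I expect to be the main obstacle, is to show that this order-isomorphism also respects finite generation: that $H$ is finitely generated as a subgroup if and only if $\rho_H$ is finitely generated as a right congruence. Both directions I would verify using the characterisation of generated right congruences by consequences (the basic lemma above). For the backward direction, if $\rho_H$ is generated by a finite set $X=\{(x_i,y_i)\}$, I would put $K=\langle x_iy_i^{-1}\rangle$ and argue $K=H$: each $x_iy_i^{-1}\in H$ since $(x_i,y_i)\in\rho_H$, giving $K\subseteq H$, while $x_i\,\rho_K\,y_i$ shows $X\subseteq\rho_K$, whence $\rho_H\subseteq\rho_K$ and so $H\subseteq K$. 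For the forward direction, if $H=\langle h_1,\dots,h_m\rangle$ I would take $X=\{(h_j,1):1\leq j\leq m\}$ and show it generates $\rho_H$: clearly $X\subseteq\rho_H$, and conversely, given $ab^{-1}\in H$ expressed as a word in the $h_j^{\pm1}$, successive left multiplication realises an $X$-sequence connecting $a$ and $b$.

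With this bridge in hand the remaining implications fall out. For $(2)\Rightarrow(3)$, finite connectedness says the universal right congruence $\rho_G$ is finitely generated, so by the backward direction of the bridge $G$ itself is finitely generated. For $(3)\Rightarrow(1)$, let $\rho=\rho_H$ be an arbitrary right congruence of finite index; then $H$ has finite index in the finitely generated group $G$, so $H$ is finitely generated by the classical fact recalled before the statement, and the forward direction of the bridge then makes $\rho_H$ finitely generated. As $\rho$ was arbitrary, $G$ is f-noetherian. The only genuinely external input is the classical theorem that finite-index subgroups of finitely generated groups are finitely generated; everything else reduces to routine manipulation of $X$-sequences under the subgroup correspondence.
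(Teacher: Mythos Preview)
Your proof is correct and follows essentially the same approach as the paper, which merely notes (without a formal proof) the correspondence between right congruences on $G$ and subgroups of $G$ together with the classical fact that finite-index subgroups of finitely generated groups are finitely generated. Your explicit verification that this correspondence preserves finite generation fills in precisely the detail the paper leaves implicit.
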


\begin{remark}
\label{noetherian group}
For groups, being right noetherian is equivalent to every subgroup being finitely generated \cite[Prop. 2.14]{Miller3}.
There exist finitely generated groups that are not right noetherian; for example, the free group on two generators.
\end{remark}

For a monoid $M,$ there is a one-to-one correspondence between the set of right congruences on $M$ and the set of cyclic $M$-acts.
In the following, we provide some basic definitions about monoid acts; see \cite{Kilp} for more information.\par
Let $M$ be a monoid.  A {\em (right) $M$-act} is a non-empty set $A$ together with a map $$A\times M\to A, (a, m) \mapsto am$$
such that $a(mn)=(am)n$ and $a1=a$ for all $a\in A$ and $m, n\in M.$\par 
For instance, $M$ itself is an $M$-act via right multiplication.\par 
An equivalence relation $\rho$ on an $M$-act $A$ is an {\em ($M$-act) congruence} on $A$ if $(a, b)\in\rho$ implies $(am, bm)\in\rho$ for all $a, b\in A$ and $m\in M.$
Note that the congruences on the $M$-act $M$ are precisely the right congruences on $M.$\par
An $M$-act $A$ is {\em finitely generated} if there exists a finite subset $X\subseteq A$ such that $A=XS^1,$
and $A$ is {\em finitely presented} if it is isomorphic to a quotient of a finitely generated free $M$-act by a finitely generated congruence.
One may consult \cite[Section 1.5]{Kilp} for further details.
A more systematic study of finite presentability of monoid acts was developed in \cite{Miller1, Miller2}.\par
The following result characterises f-noetherian monoids in terms of their acts.

\begin{prop}
The following are equivalent for a monoid $M$:
\begin{enumerate}
 \item $M$ is f-noetherian;
 \item every finite cyclic $M$-act is finitely presented;
 \item every finite $M$-act is finitely presented.
\end{enumerate}
\end{prop}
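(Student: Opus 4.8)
The plan is to prove the cycle $(1)\Rightarrow(3)\Rightarrow(2)\Rightarrow(1)$. The implication $(3)\Rightarrow(2)$ is immediate, since a finite cyclic $M$-act is in particular a finite $M$-act. For $(2)\Rightarrow(1)$ I would use the correspondence between right congruences on $M$ and cyclic $M$-acts: a right congruence $\rho$ of finite index yields the finite cyclic act $M/\rho$, the free cyclic $M$-act is $M$ itself, and the canonical surjection $M\to M/\rho$ has kernel $\rho$. Finite presentability of $M/\rho$, together with the standard fact that finite presentability of an act is independent of the chosen finite generating set (see \cite{Kilp}, or \cite{Miller1, Miller2}), then forces $\rho$ to be finitely generated, so that $M$ is f-noetherian.

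The substance of the proposition lies in $(1)\Rightarrow(3)$. Let $A$ be a finite $M$-act, choose an enumeration of its elements to obtain a surjection $\pi\colon F\to A$, where $F$ is the free $M$-act on finitely many generators $x_1,\dots,x_n$, and set $\theta=\ker\pi$. Since $A$ is finite, $\theta$ has finite index, so it suffices to prove that $\theta$ is finitely generated. I would exploit that $F$ is the coproduct of the cyclic free acts $x_iM\cong M$. For each $i$, the relation $\rho_i=\{(a,b)\in M\times M:(x_ia,x_ib)\in\theta\}$ is a right congruence on $M$ whose index equals $|\pi(x_iM)|\le|A|$, hence is finite; by $(1)$ each $\rho_i$ is finitely generated. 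Lifting a finite generating set $X_i$ of each $\rho_i$ to $\widetilde{X}_i=\{(x_ia,x_ib):(a,b)\in X_i\}$, and letting $\theta'$ be the congruence on $F$ generated by $\bigcup_i\widetilde{X}_i$, one checks that an $\overline{X}$-sequence built from within-component pairs cannot pass between distinct components; hence $F/\theta'\cong\bigsqcup_i M/\rho_i$ is a coproduct of finite cyclic acts and $\theta'$ has finite index.

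To conclude, observe that $\theta'\subseteq\theta$, that both are of finite index, and that $\theta'$ is finitely generated by construction. The earlier lemma of the paper, asserting that a finite-index right congruence containing a finitely generated finite-index right congruence is itself finitely generated, transfers verbatim to congruences on an $M$-act: its proof rests solely on the $\overline{X}$-sequence characterisation of generated congruences, which is identical in the act setting, with multipliers $s_i\in M$ and pairs drawn from the act in place of the semigroup. Applying this act-version to $\theta'\subseteq\theta$ shows that $\theta$ is finitely generated, whence $A=F/\theta$ is finitely presented.

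I expect the main obstacle to be the bookkeeping in $(1)\Rightarrow(3)$: verifying that $\theta'$ introduces no cross-component identifications, so that the coproduct description of $F/\theta'$ is correct and its index is finite, and confirming that the nested-congruence lemma genuinely carries over to $M$-acts. Both points reduce to the observation that the notion of consequence via $\overline{X}$-sequences makes sense unchanged for congruences on an arbitrary $M$-act, but it seems worth recording this act analogue explicitly before invoking it.
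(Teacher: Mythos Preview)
Your argument is correct, but it follows a genuinely different route from the paper. The paper proves the cycle $(1)\Rightarrow(2)\Rightarrow(3)\Rightarrow(1)$: the step $(1)\Rightarrow(2)$ is immediate from the correspondence $A\cong M/\rho$, and the substantial step is $(2)\Rightarrow(3)$, proved by induction on $|A|$ using the Rees quotient $A/B$ by a nontrivial cyclic subact $B$ and invoking closure results from \cite{Miller2} (that a coproduct of finitely presented acts is finitely presented, and that an act is finitely presented whenever a subact and the corresponding Rees quotient are). Your route instead attacks $(1)\Rightarrow(3)$ directly: you decompose the free act $F$ into its cyclic summands $x_iM\cong M$, apply f-noetherian to the finite-index congruences $\rho_i$ on $M$, and then upgrade the resulting finitely generated $\theta'\subseteq\theta$ to $\theta$ via the act-analogue of the paper's earlier lemma on nested finite-index congruences. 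This is more self-contained, since it avoids the external Rees-quotient and coproduct machinery of \cite{Miller2} and rests only on a verbatim transfer of an argument already in the paper; the paper's route, by contrast, is more modular and showcases how $(2)$ alone suffices to get $(3)$, which your cycle does not isolate. Your caveat about recording the act-version of the nested-congruence lemma is well taken: the transfer is indeed immediate, since the proof uses only $\overline{X}$-sequences with right multipliers, but it is not stated in the paper for acts and deserves a sentence.
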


\begin{proof}
$(1)\Rightarrow (2)$.  Let $A$ be a finite cyclic $M$-act.
By \cite[Proposition 5.17]{Kilp}, we have that $A\cong M/\rho$ where $\rho$ is a right congruence of finite index on $M.$  
Since $M$ is f-noetherian, the right congruence $\rho$ is finitely generated, so $A$ is finitely presented.\par
$(2)\Rightarrow (3)$.  We prove by induction on the number of elements.
The trivial $M$-act $\{0\}$ is finitely presented by assumption.
Now assume that every $M$-act with at most $n$ elements is finitely presented, and let $A$ be an $M$-act with $n+1$ elements.  
If $A$ is the disjoint union of $n+1$ copies of the trivial act, then $A$ is finitely presented by \cite[Corollary 5.9]{Miller2}.
Suppose then that $A$ contains a non-zero element $a,$ and consider the subact $B=\langle a\rangle$ of $A.$  Since $B$ is cyclic, it is finitely presented by assumption.
The Rees quotient $A/B$ is the set $(A\!\setminus\!B)\cup\{0\}$ with action given by
$$a\cdot m=
  \begin{cases} 
   am & \text{if }am\in A\\
   0 & \text{if }am\in B
  \end{cases}$$
and $0\cdot m=0$ for all $a\in A\!\setminus\!B$ and $m\in M.$ 
The number of elements in $A/B$ is $$|A|-|B|+1=(n+2)-|B|\leq(n+2)-2=n,$$ so $A/B$ is finitely presented by the inductive hypothesis.
Since $B$ and $A/B$ are finitely presented, it follows from \cite[Corollary 4.4]{Miller2} that $A$ is finitely presented.\par
$(3)\Rightarrow (1)$.  For any right congruence $\rho$ of finite index on $M,$ we have that the finite cyclic $M$-act $A=M/\rho$ is finitely presented, 
so $\rho$ is finitely generated by \cite[Proposition 3.9]{Miller2}.
\end{proof}

\begin{remark}
It was observed in \cite[Proposition 3.1]{Dandan} that a monoid $M$ being finitely connected is equivalent to the trivial $M$-act being finitely presented.
\end{remark}

\subsection{Algebraic properties}\hfill\par

In the following we establish various algebraic properties of f-noetherian semigroups that will be useful in the remainder of the paper.\par
The following result shows that the property of being f-noetherian is closed under homomorphic images (or equivalently quotients).

\begin{lemma}
\label{quotient}
Let $S$ be a semigroup and let $T$ be a homomorphic image of $S.$ 
If $S$ is f-noetherian, then $T$ is f-noetherian.
\end{lemma}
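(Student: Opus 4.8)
The plan is to pull the congruence back along the surjection, apply the hypothesis on $S$, and then push a finite generating set forward to $T$. Fix a surjective homomorphism $\phi\colon S\to T$ and let $\sigma$ be an arbitrary right congruence of finite index on $T$; the goal is to show that $\sigma$ is finitely generated.

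First I would introduce the relation $\rho=\{(a,b)\in S\times S : (\phi(a),\phi(b))\in\sigma\}$, the preimage of $\sigma$ under $\phi\times\phi$. A routine check shows that $\rho$ is a right congruence: it is an equivalence relation as the preimage of one, and if $(a,b)\in\rho$ and $c\in S$ then $\phi(ac)=\phi(a)\phi(c)\,\sigma\,\phi(b)\phi(c)=\phi(bc)$ since $\sigma$ is a right congruence, so $(ac,bc)\in\rho$. Its index is finite: because $\phi$ is surjective, the $\rho$-classes are exactly the preimages $\phi^{-1}(C)$ of the $\sigma$-classes $C$, and there are only finitely many of these. Since $S$ is f-noetherian, $\rho$ is generated by some finite set $X\subseteq S\times S$, and by definition of $\rho$ we have $(\phi(x),\phi(y))\in\sigma$ for every $(x,y)\in X$.

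The candidate generating set for $\sigma$ is then the finite set $\phi(X)=\{(\phi(x),\phi(y)) : (x,y)\in X\}\subseteq\sigma$, and the substantive step is to show that every pair in $\sigma$ is a consequence of $\phi(X)$. Given $(s,t)\in\sigma$, I would use surjectivity to choose $a,b\in S$ with $\phi(a)=s$ and $\phi(b)=t$; then $(a,b)\in\rho$, so by the basic lemma $(a,b)$ is a consequence of $X$, witnessed by an $X$-sequence $a=x_1u_1,\ y_1u_1=x_2u_2,\ \dots,\ y_ku_k=b$ with $(x_i,y_i)\in\overline{X}$ and $u_i\in S^1$. Extending $\phi$ to a homomorphism $S^1\to T^1$ (sending the adjoined identity to $1$) and applying it termwise converts this into $s=\phi(x_1)\phi(u_1),\ \phi(y_1)\phi(u_1)=\phi(x_2)\phi(u_2),\ \dots,\ \phi(y_k)\phi(u_k)=t$, which is a $\phi(X)$-sequence connecting $s$ and $t$ since $(\phi(x_i),\phi(y_i))\in\overline{\phi(X)}$. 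Hence $(s,t)$ is a consequence of $\phi(X)$.

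The one point requiring care — the main, though mild, obstacle — is precisely this translation of sequences: one must verify that $\phi$ respects the $S^1$/$T^1$ multipliers and that membership in $\overline{X}$ is carried to membership in $\overline{\phi(X)}$, so that a genuine $X$-sequence in $S$ maps to a genuine $\phi(X)$-sequence in $T$. Once this is established, $\phi(X)$ generates $\sigma$, so $\sigma$ is finitely generated; as $\sigma$ was an arbitrary right congruence of finite index on $T$, it follows that $T$ is f-noetherian.
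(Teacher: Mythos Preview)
Your proof is correct and follows essentially the same approach as the paper: pull back the right congruence along the surjection, use the f-noetherian hypothesis to obtain a finite generating set, and push it forward via the homomorphism, transporting $X$-sequences to $\phi(X)$-sequences. The only difference is that you spell out a few details (the verification that the pullback is a right congruence of finite index, and the extension $S^1\to T^1$) that the paper leaves implicit.
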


\begin{proof}
Let $\rho$ be a right congruence of finite index on $T$.
Let $\theta : S\to T$ be a surjective homomorphism, and define a right congruence $\rho^{\prime}$ on $S$ by 
$$s\,\rho^{\prime}\,t\iff s\theta\,\rho\,t\theta.$$
Clearly $\rho^{\prime}$ has findex index.
Since $S$ is f-noetherian, $\rho^{\prime}$ is generated by a finite set $X.$
We claim that $\rho$ is generated by the finite set 
$$Y=\{(x\theta, y\theta) : (x, y)\in X\}.$$
Indeed, let $a\,\rho\,b$ with $a\neq b.$
There exist $s, t\in S$ such that $a=s\theta$ and $b=t\theta.$
Since $s\,\rho^{\prime}\,t$ and $s\neq t,$ there exists an $X$-sequence connecting $s$ and $t.$
Applying $\theta$ to every term of this sequence yields a $Y$-sequence connecting $a$ and $b.$
\end{proof}

\begin{remark}
\label{quotient remark}
Analogues of Lemma \ref{quotient} for the properties of being finitely connected and being right noetherian are proved by essentially the same argument.
\end{remark}

The next result states that semigroups and their subsemigroups of finite complement behave in the same way with regard to being f-noetherian.

\begin{prop}
\label{large}
Let $S$ be a semigroup with a subsemigroup $T$ such that $S\!\setminus\!T$ is finite.  Then $S$ is f-noetherian if and only if $T$ is f-noetherian.
\end{prop}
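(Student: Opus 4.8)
Write $F = S \setminus T$, a finite set, and assume $T \neq \emptyset$ (otherwise $S = F$ is finite, and a finite semigroup has only finitely many pairs, hence is trivially f-noetherian). I would prove the two implications separately, using throughout that any $X$-sequence over $T$, whose multipliers lie in $T^1 \subseteq S^1$, is in particular an $X$-sequence over $S$.

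For the implication that $T$ f-noetherian implies $S$ f-noetherian, let $\rho$ be a right congruence of finite index on $S$. The restriction $\rho_T = \rho \cap (T \times T)$ is a right congruence on $T$, and each of its classes is the intersection of a $\rho$-class with $T$; hence $\rho_T$ has finite index, and so by hypothesis is generated by a finite set $X$. I would then show that $\rho$ is generated over $S$ by $X$ together with the finite set consisting of: (i) all pairs $(c,d)$ with $c,d \in F$ and $c\,\rho\,d$; and (ii) for each $c \in F$ whose $\rho$-class meets $T$, a single pair $(c, t_c)$ with $t_c \in T$ and $c\,\rho\,t_c$. Indeed, a pair $(s,t)\in\rho$ with both entries in $T$ is a consequence of $X$; a pair with both entries in $F$ lies in (i); and a pair with $s\in F,\ t\in T$ is obtained by routing through $t_s$, using (ii) and then the $X$-part. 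This direction is routine.

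The substantial direction is that $S$ f-noetherian implies $T$ f-noetherian. Let $\sigma$ be a right congruence of finite index on $T$, with classes $B_1,\dots,B_n$, and let $\sigma^S$ be the right congruence on $S$ generated by $\sigma$, viewed as a subset of $S\times S$. Since $\sigma^S$ contains $\sigma$ and the diagonal, it contains the equivalence relation whose classes are the $B_i$ and the singletons $\{c\}$ $(c\in F)$; hence every $\sigma^S$-class is a union of these $n+|F|$ blocks, and so $\sigma^S$ has finite index. As $S$ is f-noetherian, $\sigma^S$ is finitely generated; and because $\sigma$ generates $\sigma^S$, a standard argument (collecting the finitely many pairs of $\sigma$ used to derive a finite generating set) lets me take the generating set to be a finite subset $\sigma_0 \subseteq \sigma$. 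It then remains to manufacture from $\sigma_0$ a finite generating set for $\sigma$ itself as a right congruence on $T$.

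This last step is where the real work lies, and I expect it to be the main obstacle. Given $(a,b)\in\sigma$, I have an $S$-sequence over $\sigma_0$ connecting $a$ and $b$, and I must convert it into a $T$-sequence over a finite subset of $\sigma$. Each step uses a multiplier in $S^1$: a step with multiplier in $T^1$ is a genuine $\sigma$-step inside $T$, whereas a step with multiplier $c \in F$ joins two elements of the finite set $\{xc : x \text{ an entry of a pair in } \sigma_0,\ c\in F\}$ and may leave $T$ altogether. The plan is to exploit the finiteness of $F$ and of $\sigma_0$ to bound the excursions into $F$, replacing each maximal excursion by a single bridging pair of $T$-elements and so obtaining a $T$-sequence over $\sigma_0$ enlarged by finitely many bridges. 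The crux is that $\sigma^S \cap (T\times T)$ can be \emph{strictly} larger than $\sigma$ — multiplication by an element of $F$ may merge $\sigma$-classes — so a bridging pair produced this way need not a priori lie in $\sigma$, and one cannot simply restrict $\sigma^S$. The argument must combine the finite index of $\sigma$ with the fact that the original sequence connects the genuinely $\sigma$-related pair $(a,b)$ to ensure that a finite, $\sigma$-valued set of bridges suffices; controlling these excursions is precisely the point at which finiteness of the complement $F$ is indispensable.
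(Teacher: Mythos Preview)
The paper does not actually give a proof of this proposition; it simply asserts that the argument is ``essentially identical'' to that of \cite[Theorem~3.2]{Miller3} for right noetherian semigroups. So there is nothing detailed in the paper to compare against, and your proposal must be judged on its own merits.

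Your easy direction ($T$ f-noetherian $\Rightarrow$ $S$ f-noetherian) is correct and routine.

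For the hard direction your framework is sensible: taking $\sigma^S$, observing it has finite index, extracting a finite $\sigma_0\subseteq\sigma$ generating $\sigma^S$, and then trying to rewrite $S$-sequences as $T$-sequences. You also correctly isolate the genuine obstacle, namely that a maximal ``bad'' segment (where multipliers lie in $F$) connects two elements $u,u'$ of the finite set $W\cap T$ but with $(u,u')$ lying only in $\sigma^S$, not necessarily in $\sigma$; so you cannot simply replace each excursion by a single bridging pair from $\sigma$.

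However, you do not close this gap. Your final paragraph is a description of what a proof \emph{must} accomplish rather than a proof: you say the argument ``must combine the finite index of $\sigma$ with the fact that the original sequence connects the genuinely $\sigma$-related pair $(a,b)$,'' but you give no mechanism for doing so. The difficulty is real: the bad segments can change the $\sigma$-class, and although the net effect over the whole sequence is trivial (since $a\,\sigma\,b$), one needs an explicit finite set of $\sigma$-pairs that lets you re-route \emph{every} such sequence inside $T$. Producing that set requires a genuine rewriting argument (this is exactly the technical content of \cite[Theorem~3.2]{Miller3}), and nothing in your outline supplies it. As written, the hard direction is an incomplete plan rather than a proof.
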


\begin{proof}
The proof of this result is essentially identical to the prove of the corresponding result for right noetherian semigroups; see \cite[Theorem 3.2]{Miller3}.
\end{proof}

\begin{corollary}
\label{adjoin identity}
A semigroup $S$ is f-noetherian if and only if $S^1$ is f-noetherian.
\end{corollary}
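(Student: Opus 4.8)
The plan is to deduce this immediately from Proposition \ref{large}, of which it is a direct corollary; essentially no new work is required beyond instantiating that proposition correctly. The only thing to get right is the role assignment, since Proposition \ref{large} is phrased with an ambient semigroup and a subsemigroup of finite complement, and here the ambient semigroup will be $S^1$ while the distinguished subsemigroup will be $S$ itself.

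First I would observe that $S$ is always a subsemigroup of $S^1$. Next I would check that the complement $S^1\!\setminus\!S$ is finite: either $S$ already possesses an identity element, in which case $S^1=S$ and the complement is empty, or $S$ has no identity, in which case $S^1=S\cup\{1\}$ and the complement is the single element $\{1\}$. In both cases $|S^1\!\setminus\!S|\leq 1$, so the complement is finite. Applying Proposition \ref{large} with the ambient semigroup taken to be $S^1$ and the subsemigroup taken to be $S$ then gives that $S^1$ is f-noetherian if and only if $S$ is f-noetherian, which is precisely the assertion of the corollary.

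I do not anticipate any genuine obstacle here, as all the substantive content has already been established in Proposition \ref{large}; the work of that proposition is in turn delegated to the analogous result for right noetherian semigroups in \cite[Theorem 3.2]{Miller3}. The only point demanding any care is the bookkeeping described above, namely ensuring that the hypotheses of Proposition \ref{large} are verified for the pair $(S^1, S)$ rather than inadvertently for $(S, S^1)$.
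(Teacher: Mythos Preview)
Your proposal is correct and is precisely the intended argument: the corollary is an immediate instantiation of Proposition \ref{large} with ambient semigroup $S^1$ and subsemigroup $S$, whose complement has at most one element. The paper offers no separate proof for this corollary, so your write-up matches the paper's approach exactly.
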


\begin{corollary}
\label{adjoin zero}
A semigroup $S$ is f-noetherian if and only if $S^0$ is f-noetherian.
\end{corollary}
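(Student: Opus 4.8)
The plan is to deduce this immediately from Proposition \ref{large}, exactly as one expects Corollary \ref{adjoin identity} to be handled. The entire content is carried by that proposition; the only work is to exhibit $S$ as a subsemigroup of $S^0$ whose complement is finite. First I would dispose of the degenerate case: if $S$ already possesses a zero, then under the usual convention $S^0=S$ and there is nothing to prove. So I may assume $S^0=S\cup\{0\}$, where $0\notin S$ is a newly adjoined zero.

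Next I would verify that $T=S$ is genuinely a subsemigroup of $S^0$. This is routine: for $s,t\in S$ the product $st$ is the same whether computed in $S$ or in $S^0$, and in particular it lies in $S$, so $S$ is closed under the multiplication of $S^0$. The complement $S^0\!\setminus\!S=\{0\}$ is a single element, hence finite. Applying Proposition \ref{large} to the pair $T=S\subseteq S^0$ then yields at once that $S^0$ is f-noetherian if and only if $S$ is f-noetherian, which is precisely the desired equivalence.

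I do not anticipate any real obstacle here, since the result is a direct specialisation of Proposition \ref{large}; the only point requiring (minor) care is the convention for $S^0$ and the observation that adjoining a zero enlarges the semigroup by a finite set while leaving the original as a subsemigroup. Indeed, the same argument with $1$ in place of $0$ establishes Corollary \ref{adjoin identity}.
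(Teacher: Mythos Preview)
Your proposal is correct and follows exactly the approach intended by the paper: Corollary~\ref{adjoin zero} is stated without proof immediately after Proposition~\ref{large} (alongside Corollary~\ref{adjoin identity}), and is meant to be read as a direct specialisation of that proposition with $T=S$ and $S^0\!\setminus\!S=\{0\}$ finite. Your handling of the degenerate case and the verification that $S$ sits as a subsemigroup of $S^0$ are entirely appropriate, if slightly more explicit than the paper bothers to be.
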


\begin{remark}
An analogue of Proposition \ref{large} holds for the property of being right noetherian.
However, it does not hold for the property of being finitely connected.  
Indeed, adjoining a zero to any monoid yields a finitely connected monoid.
\end{remark}

The next situation we consider is where the complement of a subsemigroup is an ideal.

\begin{lemma}
\label{complement ideal}
Let $S$ be a semigroup with a subsemigroup $T$ such that $S\!\setminus\!T$ is an ideal of $S.$  If $S$ is f-noetherian, then $T$ is f-noetherian.
\end{lemma}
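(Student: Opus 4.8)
The plan is to realise $T$ via a Rees quotient of $S$ and then invoke the two results already established about homomorphic images and adjoined zeros. Set $I = S \setminus T$, which by hypothesis is an ideal of $S$; we may assume $I \neq \emptyset$, since otherwise $T = S$ and there is nothing to prove. First I would form the Rees quotient $S/I$, whose underlying set is $T \cup \{0\}$ with $0$ denoting the collapsed ideal, and whose multiplication agrees with that of $S$ on any pair of elements whose product lies outside $I$, while sending every other product to $0$.

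The key observation is that $S/I$ is isomorphic to $T^0$. Since $T$ is a subsemigroup of $S$, the product $ab$ of any two elements $a, b \in T$ again lies in $T$, hence outside $I$; thus the product of $a$ and $b$ computed in $S/I$ coincides with their product in $T$. As $I$ is an ideal, its class acts as an absorbing zero. Consequently the multiplication of $S/I$ restricted to $T$ is precisely that of $T$, and the only additional element is a zero, giving $S/I \cong T^0$. I expect this identification to be the one point requiring care, as it is exactly here that the hypothesis that $T$ is a subsemigroup (rather than merely a subset) is used.

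With this in hand the conclusion is immediate. The Rees quotient $S/I$ is a homomorphic image of $S$, so by Lemma \ref{quotient} it is f-noetherian; that is, $T^0$ is f-noetherian. Finally, Corollary \ref{adjoin zero} states that $T$ is f-noetherian if and only if $T^0$ is, and therefore $T$ is f-noetherian, as required.
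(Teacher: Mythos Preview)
Your proposal is correct and follows essentially the same route as the paper's proof: form the Rees quotient $S/I$ with $I=S\setminus T$, identify it with $T\cup\{0\}$, apply Lemma~\ref{quotient} to see it is f-noetherian, and then invoke Corollary~\ref{adjoin zero}. The paper is more terse, but your added justification that $S/I\cong T^0$ (using that $T$ is a subsemigroup) and the handling of the trivial case $I=\emptyset$ simply make explicit what the paper leaves implicit.
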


\begin{proof}
Let $I=S\!\setminus\!T.$
Since $S$ is f-noetherian, we have that the Rees quotient $S/I=T\cup\{0\}$ is f-noetherian, and hence $T$ is f-noetherian by Corollary \ref{adjoin zero}.
\end{proof}

Ideals of f-noetherian semigroups are not in general f-noetherian; see Example \ref{com reg ex}, for instance.
Monoid ideals, however, do inherit the property of being f-noetherian.

\begin{lemma}
\label{monoid ideal}
Let $S$ be a semigroup with an ideal $I$ that has an identity $e.$  If $S$ is f-noetherian, then so is $I.$
\end{lemma}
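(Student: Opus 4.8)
The plan is to realise $I$ as a homomorphic image of $S$ and then appeal to Lemma~\ref{quotient}. The crucial preliminary observation is that the identity $e$ of $I$ is a \emph{central} idempotent of $S$. Indeed, $e$ is idempotent since $ee=e$, and for any $s\in S$ both $se$ and $es$ lie in $I$ because $I$ is an ideal. Since $e$ acts as a two-sided identity on $I$, applying it on the left to $se\in I$ gives $ese=se$, while applying it on the right to $es\in I$ gives $ese=es$; hence $se=ese=es$ for every $s\in S$.

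With centrality in hand, I would define $\pi\colon S\to I$ by $\pi(s)=es$. This is well defined into $I$ because $es\in IS\subseteq I$. To see that $\pi$ is a homomorphism, note that $es\in I$ gives $(es)e=es$, so $\pi(s)\pi(t)=(es)(et)=(ese)t=(es)t=est=\pi(st)$. Moreover $\pi$ fixes every element of $I$, since $\pi(x)=ex=x$ for $x\in I$; in particular $\pi$ is surjective. Thus $\pi$ is a retraction of $S$ onto $I$, exhibiting $I$ as a homomorphic image of $S$.

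The proof then concludes immediately: since $S$ is f-noetherian and $I$ is a homomorphic image of $S$, Lemma~\ref{quotient} guarantees that $I$ is f-noetherian. The only real content lies in the first step: once one notices that an ideal possessing an identity forces that identity to be central, and hence yields the retraction $s\mapsto es$, everything reduces to the already-established closure of the class of f-noetherian semigroups under homomorphic images. I therefore expect the main (indeed essentially the only) obstacle to be spotting the centrality of $e$; after that the argument is entirely routine.
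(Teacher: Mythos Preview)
Your proof is correct. The paper's argument is in the same spirit but more hands-on: it pulls back a finite-index right congruence $\rho$ on $I$ to a right congruence $\rho'$ on $S$ via $s\,\rho'\,t \iff es\,\rho\,et$, takes a finite generating set $X$ for $\rho'$, and then checks directly that $\{(ex,ey):(x,y)\in X\}$ generates $\rho$. Your version packages this more cleanly by recognising $s\mapsto es$ as a surjective homomorphism (indeed a retraction onto $I$) and invoking Lemma~\ref{quotient}, so that the congruence manipulation is absorbed into that earlier lemma. The centrality observation is a pleasant structural bonus---strictly speaking only $(es)e=es$ is needed to verify that $\pi$ is a homomorphism, but noting that $e$ is actually central in $S$ makes the picture entirely transparent.
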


\begin{proof}
Let $\rho$ be a right congruence of finite index on $I.$  We define a right congruence $\rho^{\prime}$ on $S$ by $$s\,\rho^{\prime}\,t\iff es\,\rho\,et.$$
Since $\rho$ partitions $I$ into finitely many classes and $es\in I$ for all $s\in S,$ therefore $\rho^{\prime}$ has finite index.
Since $S$ is f-noetherian, $\rho^{\prime}$ is generated by a finite set $X.$  We claim that $\rho$ is generated by the finite set $$Y=\{(ex, ey) : (x, y)\in X\}.$$
Indeed, let $a\,\rho\,b$ with $a\neq b.$  We have that $ea=a\,\rho\,b=eb,$ so $a\,\rho^{\prime}\,b.$ 
Therefore, there exists an $X$-sequence $$a=x_1s_1, y_1s_1=x_2s_2, \dots, y_ks_k=b.$$
We now have a $Y$-sequence $$a=(ex_1)(s_1e), (ey_1)(s_1e)=(ex_2)(s_2e), \dots, (ey_k)(s_ke)=b,$$
so $(a, b)$ is a consequence of $Y.$
\end{proof}

\begin{remark}
\label{monoid ideal remark}
Analogues of Lemma \ref{monoid ideal} hold for the properties of being finitely connected and being right noetherian, and the proofs are essentially the same.
\end{remark}

It is well known that maximal subgroups of semigroups coincide with the group $\mathcal{H}$-classes. 
Sch{\"u}tzenberger showed in \cite{Schutzenberger} how one can assign a group to an arbitrary $\mathcal{H}$-class,
and we now briefly describe his construction; one may consult \cite{Lallement} for more details and basic properties of Sch{\"u}tzenberger groups.\par
Let $S$ be a semigroup and let $H$ be an $\mathcal{H}$-class of $S.$
The {\em right stabiliser} of $H$ is the set $$\text{Stab}(H)=\{s\in S^1 : Hs=H\}.$$
Clearly $\text{Stab}(H)$ is a submonoid of $S^1.$
Define a relation $\sigma(H)$ on $\text{Stab}(H)$ by $$(s, t)\in\sigma(H)\iff hs=ht\text{ for all }h\in H.$$
It is easy to see that $\sigma(H)$ is a congruence on $\text{Stab}(H),$ 
and it turns out that the quotient $\Gamma(H)=\text{Stab}(H)/\sigma(H)$ is a group.
We call $\Gamma(H)$ the {\em Sch{\"u}tzenberger group} of $H$.
Note that $|\Gamma(H)|=|H|,$ and if $H$ is a group $\mathcal{H}$-class, then it is isomorphic to $\Gamma(H).$\par
The following result states that within an $\mathcal{R}$-class with only finitely many $\mathcal{H}$-classes, the Sch{\"u}tzenberger group of each of those $\mathcal{H}$-classes is finitely generated.

\begin{prop}
\label{Schutzenberger}
Let $S$ be a semigroup with an $\mathcal{R}$-class $R$ such that $R$ is a finite union of $\mathcal{H}$-classes, and let $H$ be an $\mathcal{H}$-class of $R.$
If $S$ is f-noetherian, then the Sch{\"u}tzenberger group $\Gamma(H)$ is finitely generated.
\end{prop}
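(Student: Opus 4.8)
The plan is to exploit the f-noetherian hypothesis through a single, carefully chosen right congruence of finite index, and then to read off a finite generating set for $\Gamma(H)$ from a finite generating set of that congruence. Since $|\Gamma(H)|=|H|$ and finite groups are finitely generated, I may assume $H$ (hence $R$ and $\Gamma(H)$) is infinite. By Corollary \ref{adjoin identity} I may also assume $S$ is a monoid, as adjoining an identity changes neither Green's relations on $S$ nor $\Gamma(H)$. Fix $h_0\in H$ and let $H=H_1,\ldots,H_n$ be the $\mathcal{H}$-classes of $R$. The map $\psi:a\mapsto h_0 a$ sends each element into $R$ or strictly $\mathcal{R}$-below $R$ (never above), so it realises $R\cup\{0\}$ as a cyclic $S$-act; this act has infinite index, which is the first obstacle. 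To remedy this I collapse each $\mathcal{H}$-class of $R$ to a point and send everything $\mathcal{R}$-below $R$ to $0$. The resulting set $B=\{[H_1],\ldots,[H_n],0\}$ is a quotient act — well-definedness rests on the uniformity claim that for $x\,\mathcal{H}\,y$ in $R$ and any $m$, the products $xm$ and $ym$ lie in a common $\mathcal{H}$-class of $R$ or both strictly below $R$ (proved using that $\mathcal{L}$ is a right congruence together with a left-multiplier witnessing $x\,\mathcal{L}\,y$). Hence the kernel $\tau$ of $S\to B$ is a right congruence of index at most $n+1$, and since $S$ is f-noetherian, $\tau$ is generated by a finite set $X$.

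The heart of the argument is extracting group generators from $X$. Fix the Sch\"utzenberger bijection $\beta:\Gamma(H)\to H$, $[s]\mapsto h_0 s$, and, using Green's Lemma, for each $j$ choose $d_j\in S^1$ with $w\mapsto wd_j$ a bijection $H_j\to H$ (take $d_1=1$). These assemble into a coordinate $\Phi:R\to\Gamma(H)$, $\Phi(w)=\beta^{-1}(wd_j)$ for $w\in H_j$. The key computation, again via Green's Lemma, is that for $w\in H_j$ and $t\in S^1$ with $wt\in H_{j'}$ one has $\Phi(wt)=\Phi(w)\,[\eta]$, where $\eta\in\text{Stab}(H)$ depends only on $(j,t,j')$ and not on $w$. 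Now take any $s\in\text{Stab}(H)$; then $1\,\tau\,s$, so there is an $X$-sequence $1=x_1t_1,\,y_1t_1=x_2t_2,\,\ldots,\,y_kt_k=s$ whose successive terms $z_0=1,z_1,\ldots,z_k=s$ all lie in the single $\tau$-class containing $1$. Applying $\Phi\circ\psi$ and the key computation to the step $z_{i-1}=x_it_i$, $z_i=y_it_i$, the common factor $[\eta]$ cancels and yields $\Phi(\psi(z_i))=g_i\,\Phi(\psi(z_{i-1}))$ with $g_i=\Phi(h_0 y_i)\Phi(h_0 x_i)^{-1}$ depending only on the pair $(x_i,y_i)\in\overline{X}$.

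Telescoping from $\Phi(\psi(z_0))=\beta^{-1}(h_0)=e$ gives $[s]=\Phi(\psi(z_k))=g_k\cdots g_1$, so $[s]$ lies in the subgroup generated by the finite set $\mathcal{G}=\{\Phi(h_0 y)\Phi(h_0 x)^{-1}:(x,y)\in\overline{X}\}$. (Pairs of $X$ lying in the bottom class contribute nothing, since a step between two terms of the sequence forces $h_0 x_i\in R$.) As $s$ ranges over $\text{Stab}(H)$ the elements $[s]$ exhaust $\Gamma(H)$, so $\Gamma(H)=\langle\mathcal{G}\rangle$ is finitely generated.

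The main obstacle, as flagged, is twofold: the congruence that faithfully records $\Gamma(H)$ has infinite index and so is invisible to the hypothesis, forcing the collapse to $B$; and then one must show that a finite generating set of the \emph{collapsed} congruence still controls $\Gamma(H)$. The decisive point making the latter work is the $w$-independence of the increments $g_i$, which in turn rests on the uniformity claim and on Green's Lemma providing bijections $H_j\to H_1$ by right multiplication. I expect verifying these two facts, and checking that the $X$-sequence stays within one $\tau$-class, to be the most delicate parts.
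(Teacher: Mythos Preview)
Your approach is correct and is essentially the same as the paper's: your congruence $\tau$ coincides with the paper's $\rho$ on $S^1$ (defined by $s\,\rho\,t$ iff $Hs=Ht\subseteq R$ or $Hs,Ht\subseteq S\setminus R$), and both arguments read off generators for $\Gamma(H)$ by analysing an $X$-sequence from $1$ to an arbitrary $s\in\mathrm{Stab}(H)$. The paper's bookkeeping is slightly more direct---it picks $\alpha(x,y)\in\mathrm{Stab}(H)$ with $hx=h\,\alpha(x,y)\,y$ for each relevant $(x,y)\in\overline{X}$ and telescopes using left multipliers $\beta_i$ with $h\alpha_i=\beta_i h$---whereas you route the same telescoping through the coordinate map $\Phi$ and the right multipliers $d_j$, but the substance is identical.
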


\begin{proof}
Let $\rho$ be the right congruence of finite index on $S^1$ given by 
$$s\,\rho\,t\iff 
\begin{cases} 
   Hs=Ht\subseteq R,\\
   Hs\cup Ht\subseteq S\!\setminus\!R.
\end{cases}$$
Now $S^1$ is f-noetherian by Corollary \ref{adjoin identity}, so $\rho$ is generated by a finite set $X.$
Fix $h\in H.$  For each $(x, y)\in\overline{X}$ such that $Hx\subseteq R$ (so $Hx=Hy$), 
choose $\alpha(x, y)\in\text{Stab}(H)$ such that $hx=h\alpha(x, y)y,$
and let $$A=\{[\alpha(x, y)]_{\sigma} : (x, y)\in\overline{X}, Hx\subseteq R\},$$
where $\sigma=\sigma(H).$  We claim that $\Gamma(H)$ is generated by the finite set $A.$\par 
Indeed, let $s\in\text{Stab}(H).$  Then $s\,\rho\,1,$ so there exists an $X$-sequence
$$s=x_1s_1, y_1s_1=x_2s_2, \dots y_ks_k=1.$$
Since $H=Hs=(Hx_1)s_1,$ we have that $Hx_1\subseteq R.$
Therefore, we have that 
$$H=(Hx_1)s_1=(Hy_1)s_1=(Hx_2)s_2,$$ 
so $Hx_2\subseteq R.$  Continuing in this way, we have that $Hx_i\subseteq R$ for all $i\in\{1, \dots, k\}.$
Let $\alpha_i=\alpha(x_i, y_i),$ and choose $\beta_i\in S$ such that $h\alpha_i=\beta_ih.$
We now have that $$hs=hx_1s_1=h\alpha_1y_1s_1=\beta_1hy_1s_1=\dots=\beta_1\dots\beta_khy_ks_k=\beta_1\dots\beta_kh=h\alpha_1\dots\alpha_k.$$
Therefore, we have that $s\,\sigma\,\alpha_1\dots\alpha_k,$ and hence 
$$[s]_{\sigma}=[\alpha_1\dots\alpha_k]_{\sigma}=[\alpha_1]_{\sigma}\dots[\alpha_k]_{\sigma}\in\langle A\rangle,$$
as required.
\end{proof}

\begin{corollary}
\label{maximal subgroup}
Let $S$ be a semigroup with a maximal subgroup $G$ whose $\mathcal{R}$-class is a finite union of $\mathcal{H}$-classes.  If $S$ is f-noetherian, then $G$ is finitely generated.
\end{corollary}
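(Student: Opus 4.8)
The plan is to apply Proposition \ref{Schutzenberger} almost directly, since this corollary is precisely the specialisation of that proposition to the case where the chosen $\mathcal{H}$-class happens to be a group. Recall, as noted just before the construction of the Sch\"utzenberger group, that the maximal subgroups of a semigroup coincide with the group $\mathcal{H}$-classes. Hence the maximal subgroup $G$ is a group $\mathcal{H}$-class; call it $H$, and let $R$ denote its $\mathcal{R}$-class.

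First I would record that, by hypothesis, $R$ is a finite union of $\mathcal{H}$-classes, so that the conditions of Proposition \ref{Schutzenberger} are met with this $R$ and this $H$. Applying that proposition, together with the assumption that $S$ is f-noetherian, immediately yields that the Sch\"utzenberger group $\Gamma(H)$ is finitely generated. Finally I would invoke the observation made during the construction that, when $H$ is a group $\mathcal{H}$-class, one has $H\cong\Gamma(H)$; since $G=H$, this gives $G\cong\Gamma(H)$, and finite generation transfers across the isomorphism, so $G$ is finitely generated.

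There is essentially no obstacle here: the entire content lives in Proposition \ref{Schutzenberger}, and the only thing to check is that a group $\mathcal{H}$-class is a legitimate choice of $H$ and that its Sch\"utzenberger group recovers the group itself. The sole point requiring a word of care is the identification $G\cong\Gamma(H)$, which is exactly the remark that $|\Gamma(H)|=|H|$ and that for a group $\mathcal{H}$-class the two are isomorphic; once this is cited, the corollary follows in a single line.
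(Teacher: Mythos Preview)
Your proposal is correct and matches the paper's intent: the corollary is stated without proof precisely because it is an immediate specialisation of Proposition~\ref{Schutzenberger} via the identification $G=H\cong\Gamma(H)$ for a group $\mathcal{H}$-class. There is nothing to add.
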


\begin{remark}
Maximal subgroups of f-noetherian semigroups are not in general finitely generated.
For example, the full transformation semigroup $\mathcal{T}_X$ on an infinite set $X$ is f-noetherian by Proposition \ref{cyclic diagonal act}, 
but its maximal subgroup $S_X$ is not finitely generated.
\end{remark}
 
It is an open problem whether the direct product of two right noetherian monoids is right noetherian \cite[Open Problem 4.8]{Miller3}.
On the other hand, it was shown in \cite[Proposition 4.2]{Dandan} that the property of being finitely connected {\em is} preserved under direct products of monoids.
We now state and prove the analogue of this result for the property of being f-noetherian.

\begin{thm}
\label{dp}
Let $M$ and $N$ be two monoids.  Then $M\times N$ is f-noetherian if and only if both $M$ and $N$ are f-noetherian.
\end{thm}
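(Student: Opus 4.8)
The plan is to treat the two directions separately. The forward implication—if $M\times N$ is f-noetherian then so are $M$ and $N$—is immediate: since $M$ and $N$ are monoids, the coordinate projections $\pi_M\colon M\times N\to M$ and $\pi_N\colon M\times N\to N$ are surjective homomorphisms, so $M$ and $N$ are homomorphic images of $M\times N$ and Lemma \ref{quotient} applies. All the work lies in the converse.

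For the converse, suppose $M$ and $N$ are f-noetherian and let $\rho$ be a right congruence of finite index on $M\times N$; the goal is to show $\rho$ is finitely generated. My strategy is not to construct a generating set for $\rho$ directly (the interaction between the two coordinates makes this awkward), but instead to squeeze $\rho$ between a manageable finitely generated finite-index right congruence and itself, and then invoke the lemma above asserting that a finite-index right congruence containing a finitely generated finite-index right congruence is itself finitely generated. Concretely, I would define relations on $M$ and $N$ by
$$m_1\,\rho_M\,m_2\iff (m_1,1_N)\,\rho\,(m_2,1_N),\qquad n_1\,\rho_N\,n_2\iff (1_M,n_1)\,\rho\,(1_M,n_2).$$
A routine check (multiplying the defining pairs by $(m',1_N)$ and $(1_M,n')$ respectively) shows these are right congruences, and each has finite index because $[m]_{\rho_M}\mapsto[(m,1_N)]_\rho$ embeds $M/\rho_M$ into the finite set $(M\times N)/\rho$, and symmetrically for $\rho_N$. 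Since $M$ and $N$ are f-noetherian, $\rho_M$ and $\rho_N$ are then generated by finite sets $X_M$ and $X_N$.

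The heart of the argument is the product right congruence $\rho_M\times\rho_N$ on $M\times N$. I would verify that it is contained in $\rho$: if $m_1\,\rho_M\,m_2$ and $n_1\,\rho_N\,n_2$ then, after multiplying the defining pairs by suitable elements, $(m_1,n_1)\,\rho\,(m_2,n_1)\,\rho\,(m_2,n_2)$. It plainly has finite index, namely the product of the indices of $\rho_M$ and $\rho_N$. Finally I would show it is generated by the finite set
$$G=\{((x,1_N),(y,1_N)):(x,y)\in X_M\}\cup\{((1_M,x),(1_M,y)):(x,y)\in X_N\}.$$
The key observation is that, because multiplication in $M\times N$ acts coordinatewise, a $G$-sequence built from the first family of pairs leaves the second coordinate fixed and traces out an $X_M$-sequence in the first coordinate, and symmetrically for the second family; hence $G$ connects $(m_1,n_1)$ to $(m_2,n_1)$ whenever $m_1\,\rho_M\,m_2$, and $(m_2,n_1)$ to $(m_2,n_2)$ whenever $n_1\,\rho_N\,n_2$. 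Concatenating these connects any $\rho_M\times\rho_N$-related pair, so $G$ generates $\rho_M\times\rho_N$. With $\rho_M\times\rho_N$ established as a finitely generated finite-index right congruence contained in $\rho$, the containment lemma yields at once that $\rho$ is finitely generated.

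I expect the main obstacle to be the verification that $G$ generates $\rho_M\times\rho_N$, which hinges on correctly tracking the ``fixed coordinate'' behaviour of $G$-sequences; once that is pinned down the remainder is bookkeeping. The genuine conceptual step, however, is recognising that one should pass to the sub-congruence $\rho_M\times\rho_N$ and exploit the containment lemma, rather than attempting to generate $\rho$ itself.
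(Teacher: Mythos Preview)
Your argument is correct, and it takes a genuinely different route from the paper. The paper builds a finite generating set for $\rho$ directly: it uses the right congruence $\rho_{1_M}^N$ on $N$ (your $\rho_N$), but rather than the single congruence $\rho_M$ on $M$, it employs one right congruence $\rho_{d_j}^M$ on $M$ for \emph{each} chosen representative $d_j$ of a $\rho_N$-class, together with an explicit finite ``gluing'' set $H$ of pairs linking the distinguished elements $(\alpha_{i,j},d_j)$ across different $j$ within the same $\rho$-class. Your approach sidesteps this bookkeeping entirely: by passing to the product sub-congruence $\rho_M\times\rho_N$, which is visibly finitely generated and of finite index, you offload the gluing step to the containment lemma proved earlier in the paper. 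This is cleaner and more modular---it makes transparent that the containment lemma is really doing the combinatorial work---whereas the paper's proof is self-contained in the sense that one sees the explicit generating set for $\rho$ itself. Both are perfectly sound; yours is arguably the more elegant packaging.
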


\begin{proof}
The direct implication follows from Lemma \ref{quotient}, so we just need to prove the converse.\par
Let $\rho$ be a right congruence of finite index on $M\times N,$ and let $C_i, i\in I,$ be its classes.
For each $m\in M,$ we define a right congruence $\rho_m^N$ on $N$ by
$$a\,\rho_m^N\,b\iff(m, a)\,\rho\,(m, b).$$
In a similar way we define a right congruence $\rho_n^M$ on $M$ for each $n\in N.$
Clearly $\rho_m^N$ and $\rho_n^M$ have finite index for every $m\in M$ and $n\in N.$
Let $D_j, j\in J,$ be the $\rho_{1_M}^N$-classes.
For each $j\in J,$ choose $d_j\in D_j.$\par
Let $Q$ denote the set of pairs $(i, j)\in I\times J$ such that $(m, d_j)\in C_i$ for some $m\in M.$
For each pair $(i, j)\in Q,$ choose $\alpha_{i, j}\in M$ such that $(\alpha_{i, j}, d_j)\in C_i.$
We now define a set 
$$H=\bigl\{\bigl((\alpha_{i, j}, d_j), (\alpha_{i, k}, d_k)\bigr) : (i, j), (i, k)\in Q, j\neq k\bigr\}.$$
Since $M$ and $N$ are f-noetherian, we have that $\rho_{1_M}^N$ is generated by some finite set $X$,
and each $\rho_j^M=\rho_{d_j}^M$ is generated by some finite set $X_j.$
We define a set 
$$Y=\bigl\{\bigl((1_M, x), (1_M, y)\bigr) : (x, y)\in X\bigr\},$$
and, for each $j\in J,$ we let 
$$Y_j=\bigl\{\bigl((x, d_j), (y, d_j)\bigr) : (x, y)\in X_j\bigr\}.$$
We claim that $\rho$ is generated by the finite set 
$$Z=H\cup Y\cup\biggl(\bigcup_{j\in J}Y_j\biggr).$$
Indeed, let $(m, u), (n, v)\in C_i, i\in I,$ with $(m, u)\neq(n, v).$\par
Now, there exist $j, k\in J$ such that $u\in D_j$ and $v\in D_k.$
Therefore, there exists an $X$-sequence connecting $u$ and $d_j,$ 
and hence there clearly exists a $Y$-sequence connecting $(1_M, u)$ and $(1_M, d_j).$
Multiplying every term of this latter sequence on the right by $(m, 1_N),$ 
we have a $Y$-sequence connecting $(m, u)$ and $(m, d_j).$
Since $(m, d_j)\in C_i,$ we have that $(m, d_j)\,\rho\,(\alpha_{i, j}, d_j),$ so $m\,\rho_j\,\alpha_{i, j}.$
Therefore, by a similar argument as above, there exists a $Y_j$-sequence connecting $(m, d_j)$ and $(\alpha_{i, j}, d_j).$
Hence, we have that $\bigl((m, u), (\alpha_{i, j}, d_j)\bigr)$ is a consequence of $Y\cup Y_j.$
A similar argument proves that $\bigl((n, v), (\alpha_{i, k}, d_k)\bigr)$ is a consequence of $Y\cup Y_k.$
Now, either $j=k$ or $\bigl((\alpha_{i, j}, d_j), (\alpha_{i, k}, d_k)\bigr)\in H,$
so it follows that $\bigl((m, u), (n, v)\bigr)$ is a consequence of $Z.$
\end{proof}

\begin{remark}
The direct product of two f-noetherian semigroups is not necessarily f-noetherian.  
Indeed, it was shown in \cite[Example 4.3]{Dandan} that $\mathbb{N}\times\mathbb{N}$ is not finitely connected.
\end{remark}

\section{\large{Transformation semigroups}\nopunct}

In this section we prove that certain infinite transformation semigroups are f-noetherian.
The notion of the {\em diagonal act} of a semigroup will turn out to be useful.
For any semigroup $S$ and $n\in\mathbb{N},$ the Cartesian product of $n$ copies of $S$ can be made into an $S$-act by defining
$$(x_1, \dots, x_n)s=(x_1s, \dots, x_ns)$$
for all $x_1, \dots, x_n, s\in S.$
If $n=2,$ we call it the {\em diagonal $S$-act}, and for $n\geq 3$ we refer to it as the {\em $n$-diagonal $S$-act} and denote it by $S^{(n)}.$\par
It is easy to see that any set that generates the diagonal $S$-act is also a generating set for the universal right congruence on $S.$
Gallagher showed in \cite{Gallagher1} that infinite semigroups from various `standard' semigroup classes, 
such as commutative, inverse, idempotent, cancellative, completely regular and completely simple, do {\em not} have finitely generated diagonal acts.
However, it was proved in \cite{Gallagher2} that the diagonal act is cyclic for various transformation semigroups on an infinite set.

\begin{thm}\cite[Table 1]{Gallagher2}
\label{transformations}
Let $X$ be an infinite set, and let $S$ be any of the following transformation semigroups on $X$:
\begin{itemize}
\item $\mathcal{B}_X$ (the monoid of binary relations); 
\item $\mathcal{T}_X$ (the full transformation monoid);
\item $\mathcal{P}_X$ (the monoid of partial transformations); 
\item $\mathcal{F}_X$ (the monoid of full finite-to-one transformations).
\end{itemize}
Then the diagonal $S$-act is cyclic.
\end{thm}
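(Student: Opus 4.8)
The plan is to exhibit, for each of the four semigroups, a single pair $(a,b)\in S\times S$ that generates $S\times S$ as an $S$-act; since each $S$ is a monoid, this amounts to finding a pair for which, given any target $(u,v)\in S\times S$, one can solve the simultaneous equations $as=u$ and $bs=v$ for some $s\in S$. I would use one uniform construction for all four classes. Since $X$ is infinite we may partition it as $X=X_1\sqcup X_2$ with $|X_1|=|X_2|=|X|$, and fix bijections $a\colon X\to X_1$ and $b\colon X\to X_2$, regarded as elements of $S$. A bijection onto a subset is total, injective, finite-to-one and functional, so it lies in each of $\mathcal{B}_X$, $\mathcal{T}_X$, $\mathcal{P}_X$ and $\mathcal{F}_X$. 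The two features of this choice that do the work are that $a$ and $b$ are injective and that their images partition $X$.

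Given an arbitrary $(u,v)$, I would define $s$ piecewise by $s(xa)=xu$ and $s(xb)=xv$ for all $x\in X$. This is unambiguous: injectivity of $a$ means $xa$ determines $x$, so the first clause is well defined, and likewise for $b$; and since $X_1=Xa$ and $X_2=Xb$ are disjoint and cover $X$, the two clauses never conflict and together define $s$ on all of $X$. Using the convention $x(fg)=(xf)g$, a direct check gives $x(as)=(xa)s=xu$ and $x(bs)=(xb)s=xv$ for every $x$, so $as=u$ and $bs=v$. Hence $(a,b)S=S\times S$ and the diagonal act is cyclic.

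The only remaining work is to confirm that the $s$ just built actually lies in the ambient semigroup, and this is the sole point at which the four cases diverge. For $\mathcal{T}_X$ it is immediate, as $s$ is a total transformation. For $\mathcal{P}_X$ one simply leaves $s(xa)$ (resp. $s(xb)$) undefined exactly when $xu$ (resp. $xv$) is, and the same computation carries over to partial maps. For $\mathcal{B}_X$ I would instead take $s=\{(xa,z):(x,z)\in u\}\cup\{(xb,z):(x,z)\in v\}$ and verify $as=u$ and $bs=v$ via relational composition; this succeeds precisely because $a,b$ are functional and injective with disjoint images, so $(xa,z)\in s$ forces $(x,z)\in u$ and no interference occurs.

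I expect the genuine obstacle to be the class $\mathcal{F}_X$ of finite-to-one maps, where one must check that $s$ is again finite-to-one. This is exactly where the choice of generator pays off: the preimage of a point $w$ under $s$ is $(wu^{-1})a\cup(wv^{-1})b$, and since $u,v$ are finite-to-one the sets $wu^{-1}$ and $wv^{-1}$ are finite, so their injective images, and hence their union, are finite. Crucially, because $X_1\cup X_2=X$ there are no ``leftover'' points of $X$ on which $s$ would have to be assigned values separately; this simultaneously keeps $s$ total (as required for $\mathcal{T}_X$ and $\mathcal{F}_X$) and rules out the accidental creation of infinite fibres. Thus the single pair $(a,b)$ witnesses cyclicity of the diagonal act in all four cases.
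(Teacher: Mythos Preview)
Your argument is correct. The paper does not actually supply a proof of this theorem; it is quoted verbatim as a result from \cite{Gallagher2} and used as a black box to feed into Proposition~\ref{cyclic diagonal act}. So there is no in-paper proof to compare against.

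That said, your construction is the standard one and matches what appears in the cited source: pick bijections $a,b$ onto the two halves of a partition $X=X_1\sqcup X_2$, and for any target pair $(u,v)$ build $s$ by pushing $u$ through $a^{-1}$ on $X_1$ and $v$ through $b^{-1}$ on $X_2$. Your case-by-case verification that $s$ lands in the correct monoid is accurate; in particular your treatment of $\mathcal{F}_X$ (computing $ws^{-1}=(wu^{-1})a\cup(wv^{-1})b$ and using that $u,v$ are finite-to-one) is exactly the point that needs checking, and your handling of $\mathcal{B}_X$ via the explicit relation $s=\{(xa,z):(x,z)\in u\}\cup\{(xb,z):(x,z)\in v\}$ is clean. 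One tiny stylistic remark: in the $\mathcal{B}_X$ case you might want to say explicitly that $a$ and $b$, being bijections onto subsets, are in particular \emph{left-total} relations, so that the composite $as$ picks up every pair of $u$ (you use injectivity and disjointness to rule out spurious pairs, but left-totality is what guarantees no pair of $u$ is missed). This is implicit in your computation but worth a word.
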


We show that a semigroup $S$ whose diagonal act is cyclic is f-noetherian.  In order to prove this, we require the following result of Gallagher.

\begin{prop}\cite[Propositions 3.1.11 and 3.1.12]{Gallagher}
\label{diagonal act}
Let $S$ be a non-trivial semigroup with a cyclic diagonal act, so that $S\times S=(a, b)S$ for some $a, b\in S.$
\begin{enumerate}
 \item The subsemigroup generated by $a$ and $b$ is free of rank two.
 \item For each $n\in\mathbb{N},$ if we list the elements of $\{a, b\}^n$ (the set of words of length $n$ over $\{a, b\}$) as $w_1, \dots, w_{2^n},$ then $S^{(2^n)}=(w_1, \dots, w_{2^n})S.$
\end{enumerate}
\end{prop}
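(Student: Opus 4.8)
The plan is to establish part (2) first, by induction on $n$, directly from the cyclicity hypothesis $S\times S=(a,b)S$, and then to deduce part (1) from part (2). The base case $n=1$ of (2) is precisely the hypothesis, since the two words of length $1$ are $a$ and $b$.

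For the inductive step of (2), suppose $S^{(2^n)}=(w_1,\dots,w_{2^n})S$, where $w_1,\dots,w_{2^n}$ enumerate the words of length $n$. I would list the words of length $n+1$ by prepended first letter as $aw_1,\dots,aw_{2^n},bw_1,\dots,bw_{2^n}$. Given an arbitrary target tuple $(z_1,\dots,z_{2^{n+1}})\in S^{2^{n+1}}$, I would first use cyclicity to choose, for each $i\le 2^n$, an element $t_i\in S$ with $at_i=z_i$ and $bt_i=z_{2^n+i}$ (possible since $(z_i,z_{2^n+i})\in S\times S=(a,b)S$). The inductive hypothesis then supplies an $s\in S$ with $w_is=t_i$ for all $i$, and one checks immediately that $(aw_i)s=at_i=z_i$ and $(bw_i)s=bt_i=z_{2^n+i}$, so the single tuple of length-$(n+1)$ words generates $S^{(2^{n+1})}$. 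This step is routine once the coordinates are split according to the first letter.

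For part (1), I would first record that $S$ is infinite: $|S|^2=|S\times S|=|(a,b)S|\le|S|$ forces $|S|\le 1$ when $S$ is finite, contradicting non-triviality. The key consequence of part (2) is what I shall call same-length injectivity: distinct words of the same length $n$ represent distinct elements of $S$. Indeed, if two coordinates $w_i,w_j$ (with $i\ne j$) of the generating tuple were equal in $S$, then $w_is=w_js$ for every $s$, so the image $(w_1,\dots,w_{2^n})S$ would miss every tuple whose $i$th and $j$th entries differ, contradicting the fact from part (2) that $(w_1,\dots,w_{2^n})S$ is all of $S^{2^n}$. It remains only to rule out relations between words of different lengths.

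Here is the crux, and the step I expect to be the main obstacle. Suppose $u=v$ in $S$ with $u,v$ distinct words; by same-length injectivity I may assume $|u|\ne|v|$. Substituting $v$ for $u$ shows $uv=uu=vu$ in $S$, so the equal-length words $uv$ and $vu$ are equal in $S$; if they are distinct as words, this contradicts same-length injectivity and we are done. The remaining possibility is $uv=vu$ as words, which by the commutation theorem in free semigroups means $u=\rho^i$ and $v=\rho^j$ for a common word $\rho$ and some $i\ne j$. To break this periodicity I would right-multiply the relation by a letter $c\in\{a,b\}$ chosen different from the first letter of $\rho$, obtaining $uc=vc$ in $S$; a short combinatorial check shows that $(uc)(vc)$ and $(vc)(uc)$ are then distinct words of equal length that are equal in $S$, again contradicting same-length injectivity. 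The delicate point throughout is this reduction of a length-changing relation to a genuine equal-length collision, and in particular the combinatorics-on-words argument disposing of the periodic case. Once it is in place, no nontrivial relation survives, so $\langle a,b\rangle$ is free of rank two.
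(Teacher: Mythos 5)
Your argument is correct and complete. Note that the paper itself gives no proof of this statement---it is imported verbatim from Gallagher's thesis (Propositions 3.1.11 and 3.1.12)---so there is no internal argument to compare against; what you have written stands as a self-contained proof, whether or not it matches Gallagher's original one. Your induction for (2) is the natural argument, and fixing the particular listing $aw_1,\dots,aw_{2^n},bw_1,\dots,bw_{2^n}$ is harmless, since the statement for an arbitrary listing follows by permuting the coordinates of the target tuple. The real content is your deduction of (1) from (2), and it holds up at every step: same-length injectivity needs only $|S|\geq 2$ (your cardinality computation showing $S$ is infinite is correct but more than is required); a relation $u=v$ between words of different lengths forces $uv=uu=vu$ in $S$, contradicting same-length injectivity unless $uv=vu$ as words; and in that periodic case the commutation theorem gives $u=\rho^i$, $v=\rho^j$ with $i\neq j$ and $i,j\geq 1$ (both positive since words in a semigroup are nonempty, and $i\neq j$ since $|u|\neq|v|$). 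Your letter trick then closes the gap: taking $c\in\{a,b\}$ different from the first letter of $\rho$ and assuming $i<j$, the words $ucvc$ and $vcuc$ have equal length $|u|+|v|+2$, are equal in $S$ because $uc=vc$ in $S$ implies $(uc)(vc)=(vc)(vc)=(vc)(uc)$, and are distinct as words because they differ at position $i|\rho|+1$, where the first carries $c$ and the second carries the first letter of $\rho$. This ordering of the material---proving the tuple statement first and extracting freeness from it---is arguably the cleaner route, since it sidesteps the failure of cancellativity that makes a direct attack on a relation $u=v$ awkward; the only step a referee would scrutinise is exactly the one you flagged, and your handling of it is sound.
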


\begin{prop}
\label{cyclic diagonal act}
Let $S$ be a semigroup.  If the diagonal $S$-act is cyclic, then $S$ is f-noetherian.
\end{prop}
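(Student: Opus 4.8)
The plan is to prove the following strong statement, from which the result is immediate: if $S$ is nontrivial and has a cyclic diagonal act, then the \emph{only} right congruence of finite index on $S$ is the universal relation. Since a one-element semigroup is finite and hence f-noetherian, I would first dispose of the trivial case and assume $S \times S = (a, b)S$ as in Proposition~\ref{diagonal act}. I would then fix an arbitrary right congruence $\rho$ of finite index $k$, the goal being to exhibit a finite generating set for $\rho$.

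The key observation, and the step I expect to be the crux, is that a single pair of distinct length-$n$ words already generates the universal right congruence. For any $n$, Proposition~\ref{diagonal act}(2) gives $S^{(2^n)} = (w_1, \dots, w_{2^n})S$, where $w_1, \dots, w_{2^n}$ enumerate $\{a, b\}^n$. Projecting this cyclic $2^n$-diagonal act onto any two coordinates $p \neq q$, I would argue that the map $r \mapsto (w_p r, w_q r)$ is surjective onto $S \times S$: given any $(u, v) \in S \times S$, realise a $2^n$-tuple having $u$ in position $p$ and $v$ in position $q$ as $(w_1 r, \dots, w_{2^n} r)$, so that $w_p r = u$ and $w_q r = v$. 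Hence $(w_p, w_q)S = S \times S$, and therefore the right congruence generated by the single pair $(w_p, w_q)$ contains every pair $(w_p r, w_q r)$, that is, all of $S \times S$; so $(w_p, w_q)$ generates the universal right congruence.

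Finally I would invoke the finiteness of the index through a pigeonhole argument. Choosing $n$ with $2^n > k$, the $2^n$ words $w_1, \dots, w_{2^n}$ lie in at most $k$ classes of $\rho$, so $w_p \,\rho\, w_q$ for some $p \neq q$. Then $(w_p, w_q) \in \rho$, and since this pair generates the universal relation, $\rho$ must itself be universal. As the universal right congruence is generated by the single pair $(a, b)$ (the case $n = 1$), it is finitely generated, and since $\rho$ was an arbitrary right congruence of finite index, $S$ is f-noetherian. I would stress that the finite-index hypothesis is used exactly once, to force the pigeonhole collision among the words; this is precisely what the definition of f-noetherian demands, and it is also why the argument does not extend to right congruences of infinite index, explaining how such semigroups can fail to be right noetherian while still being f-noetherian.
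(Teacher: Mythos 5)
Your proposal is correct and takes essentially the same route as the paper's proof: after disposing of the trivial case, both arguments apply the Pigeonhole Principle to the $2^n$ words of $\{a,b\}^n$ to find $w_p\,\rho\,w_q$ with $p\neq q$, and then use Proposition \ref{diagonal act}(2) to conclude that $(w_p,w_q)S=S\times S$, forcing $\rho$ to be the universal right congruence (which is generated by the single pair $(a,b)$). The only cosmetic differences are that the paper takes $n$ equal to the index of $\rho$ and cites Proposition \ref{diagonal act}(1) for the distinctness of the words, whereas you choose any $n$ with $2^n>k$ and make the coordinate-projection step explicit.
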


\begin{proof}
If $S$ is trivial, then it is obviously f-noetherian, so assume that $S$ is non-trivial.  
Since the diagonal $S$-act is cyclic, there exist $a, b\in S$ such that $S\times S=(a, b)S,$
and hence $(a, b)$ generates the universal right congruence on $S.$\par
We claim that the universal right congruence is the only right congruence of finite index on $S.$
So, let $\rho$ be a right congruence of index $n$ on $S.$ 
List the elements of $\{a, b\}^n,$ which are distinct by Proposition \ref{diagonal act}(1), as $w_1, \dots, w_{2^n}.$
By the Pigeonhole Principle, there exist $i, j\in\{1, \dots, 2^n\}$ with $i\neq j$ such that $w_i\,\rho\,w_j.$
We have that $S^{(2^n)}=(w_1, \dots, w_{2^n})S$ by Proposition \ref{diagonal act}(2).
It follows that $S\times S=(w_i, w_j)S,$ and hence $\rho$ is the universal right congruence.
\end{proof}

The symmetric inverse monoid $\mathcal{I}_X$ on an infinite set $X$ does not have a finitely generated diagonal act \cite[Theorem 2.5]{Gallagher2}.  However, it is f-noetherian:

\begin{prop}
\label{symmetric inverse}
Let $X$ be an infinite set.  Then the symmetric inverse monoid $\mathcal{I}_X$ is f-noetherian.
\end{prop}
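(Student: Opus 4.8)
The plan is to take an arbitrary right congruence $\rho$ of finite index on $\mathcal{I}_X$ and produce a finite generating set for it; since the diagonal $\mathcal{I}_X$-act is not finitely generated, Proposition \ref{cyclic diagonal act} does not apply and a direct argument is needed. I will use the standard description of Green's relations: partial bijections $\alpha,\beta$ are $\mathcal{R}$-related exactly when $\operatorname{dom}\alpha=\operatorname{dom}\beta$ and $\mathcal{L}$-related exactly when $\operatorname{im}\alpha=\operatorname{im}\beta$, while the $\mathcal{D}=\mathcal{J}$-classes are graded by the rank $|\operatorname{dom}\alpha|$. The two features of $\mathcal{I}_X$ that drive the argument are that right multiplication can only shrink the domain (hence never raises the rank), and that the idempotents are precisely the partial identities $e_A=\operatorname{id}_A$, with $e_A e_B=e_{A\cap B}$ and $e_A\gamma=\gamma|_{A\cap\operatorname{dom}\gamma}$ for every $\gamma$.

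First I would harvest collapsing relations among idempotents. Given any strictly monotone chain of subsets of $X$, the corresponding idempotents are pairwise distinct and infinite in number, so finiteness of the index forces two of them, say $e_A$ and $e_B$ with $B\subsetneq A$, to be $\rho$-related. Multiplying $e_A\,\rho\,e_B$ on the right by an arbitrary $\gamma$ yields $\gamma|_{A}\,\rho\,\gamma|_{B}$; in particular, every element with domain inside $A$ is $\rho$-related to its restriction to $B$. This \emph{restriction move} is the main mechanism for trading domain against rank, and by exploiting several such relations (taken along both finite ascending chains and chains of infinite sets) I expect to show that, modulo $\rho$, every element may be replaced by one drawn from a tightly controlled family, so that only finitely many shapes of domain and image survive.

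The second step is to understand how $\rho$ behaves on this controlled family using the $\mathcal{R}$- and $\mathcal{L}$-coordinates. Because right multiplication can only erode the set of domain points, the domain of an element behaves like an invariant that multiplication cannot enlarge, whereas the image can be relabelled arbitrarily by a suitable $\gamma$ (or annihilated to the zero map). Consequently the finitely many $\rho$-classes partition the domain/image data into finitely many types, and a single relation between two elements propagates, under right multiplication, to an entire family of relations between their images. I would select finitely many generating pairs realising these type-identifications, together with the finitely many idempotent relations above, and verify via the $X$-sequence calculus that their consequences recover all of $\rho$.

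The hard part will be the last verification: proving that finitely many pairs genuinely suffice. The obstruction is the left--right asymmetry of a right congruence — one may freely adjust images by right multiplication but cannot relabel the support of an idempotent, so connecting elements living over different domains, and in particular handling the infinite-rank elements whose domains are proper but still of cardinality $|X|$, cannot be done by a single propagation. I anticipate resolving this by using the richness of $\mathcal{I}_X$ for infinite $X$ (for instance a fixed bijection $X\cong X\sqcup X$) to manufacture the connecting elements needed to route any pair in $\rho$ through the chosen generators. An alternative packaging of the same work would be to exhibit a finitely generated right congruence of finite index contained in $\rho$ and then invoke the earlier observation that a finitely generated finite-index right congruence forces every larger finite-index right congruence to be finitely generated.
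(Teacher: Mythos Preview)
Your proposal is a plan rather than a proof, and you correctly flag the obstruction yourself: connecting elements supported on different domains, in particular the infinite-rank ones, cannot be done by right multiplication alone, and you have not shown how the finitely many pairs you envisage actually generate $\rho$. The machinery you set up (idempotent chains, restriction moves, type-partitions by domain and image data) is plausible, but the decisive closing verification is left open, and it is not clear that it can be closed along the lines you sketch.

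More to the point, the paper sidesteps all of this by proving a much stronger and much simpler statement: the universal right congruence is the \emph{only} right congruence of finite index on $\mathcal{I}_X$. The argument is a two-line pigeonhole. Partition $X$ into $n+1$ pairwise disjoint pieces $X_1,\dots,X_{n+1}$, each of cardinality $|X|$, and choose bijections $\alpha_i\colon X\to X_i$. If $\rho$ has index $n$, two of these, say $\alpha_i$ and $\alpha_j$ with $i\neq j$, lie in the same $\rho$-class; right-multiplying by $\alpha_i^{-1}$ gives
\[
1=\alpha_i\alpha_i^{-1}\ \rho\ \alpha_j\alpha_i^{-1}=0,
\]
the last equality holding because $\operatorname{im}\alpha_j=X_j$ is disjoint from $\operatorname{dom}\alpha_i^{-1}=X_i$. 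Hence $\rho$ is universal, and since the universal congruence is generated by the single pair $(1,0)$ the result is immediate. Your instinct to exploit a bijection $X\cong X\sqcup X$ points in the right direction, but the decisive move is to use \emph{many} disjoint copies simultaneously and force $1\,\rho\,0$ outright, rather than to attempt a structural classification of the classes of $\rho$.
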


\begin{proof}
Letting $1$ and $0$ denote the identity and zero of $I_X$ respectively, we have that the universal right congruence on $\mathcal{I}_X$ is generated by the pair $(1, 0).$
We claim that this is the only right congruence of finite index on $\mathcal{I}_X.$
So, let $\rho$ be a right congruence of index $n$ on $\mathcal{I}_X.$
Partition $X$ as $X=\bigcup_{i=1}^{n+1}X_i,$ where the $X_i$ are disjoint subsets of $X,$ each with the same cardinality as $X,$ and define bijections $\alpha_i : X\to X_i.$  
By the Pigeonhole Principle, there exist $i, j\in\{1, \dots, n+1\}$ with $i\neq j$ such that $\alpha_i\,\rho\,\alpha_j.$
But then we have that $$1=\alpha_i\alpha_i^{-1}\,\rho\,\alpha_j\alpha_i^{-1}=0,$$ so $\rho=\langle(1, 0)\rangle$ is the universal right congruence.
\end{proof}

\begin{remark}
Let $X$ be an infinite set.  The semilattice of idempotents of $\mathcal{I}_X$ is isomorphic to the semigroup $P(X)$ of all subsets of $X$ under intersection.
It is finitely connected since it is a monoid with a zero.  However, it is not f-noetherian.
Indeed, the subsemigroup $P(X)\!\setminus\!\{X\}$ has the infinite set of maximal elements $\{X\!\setminus\!\{x\} : x\in X\},$
so it is not finitely connected by \cite[Corollary 5.7]{Dandan}, and hence not f-noetherian.
It follows from Theorem \ref{large} that $P(X)$ is not f-noetherian.
\end{remark}

\begin{remark}
The symmetric group $\mathcal{S}_X$ on an infinite set $X$ is not f-noetherian by Lemma \ref{group}, since it is not finitely generated.
\end{remark}

For a set $X,$ we denote the semigroups of full surjective transformations and full injective transformations on $X$ by ${Surj}_X$ and ${Inj}_X$ respectively.

\begin{prop}
Let $X$ be an infinite set.  Then the semigroups ${Surj}_X$ and ${Inj}_X$ are not f-noetherian.
\end{prop}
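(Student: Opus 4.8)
The plan is to apply Corollary \ref{maximal subgroup} to the group of units of each semigroup. In both ${Surj}_X$ and ${Inj}_X$ the group of units is the symmetric group $\mathcal{S}_X$, which is a maximal subgroup of the semigroup (in fact one checks easily that the identity is the only idempotent in either semigroup, so $\mathcal{S}_X$ is the unique maximal subgroup). Since $X$ is infinite, $\mathcal{S}_X$ is not finitely generated, so if I can verify the hypothesis of Corollary \ref{maximal subgroup}---namely that the $\mathcal{R}$-class of $\mathcal{S}_X$ is a finite union of $\mathcal{H}$-classes---then being f-noetherian is immediately ruled out.

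The key step is therefore to show that the $\mathcal{R}$-class of the identity coincides with $\mathcal{S}_X$ itself, so that this $\mathcal{R}$-class is a single $\mathcal{H}$-class. An element $\alpha$ is $\mathcal{R}$-related to the identity precisely when $\alpha S^1 = S^1$, equivalently when $\alpha$ possesses a right inverse $\gamma$ with $\alpha\gamma = 1$. I would then argue that in each semigroup right-invertibility forces $\alpha$ to be a bijection. In ${Surj}_X$, if $\alpha$ were not injective, say $(x_1)\alpha = (x_2)\alpha$ with $x_1\neq x_2$, then $\alpha\gamma = 1$ would demand $(x_1)\alpha\gamma = x_1$ and $(x_2)\alpha\gamma = x_2$ while $((x_1)\alpha)\gamma = ((x_2)\alpha)\gamma$, an impossibility; hence $\alpha$ is injective, and being surjective it is bijective. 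In ${Inj}_X$ the dual point is that a right inverse $\gamma$ restricts to a bijection of $\mathrm{im}(\alpha)$ onto $X$, so if $\mathrm{im}(\alpha)\neq X$ then $\gamma$ would be forced to send some $z\in X\setminus\mathrm{im}(\alpha)$ to an element already in the image of $\gamma|_{\mathrm{im}(\alpha)}$, violating injectivity of $\gamma$; thus $\alpha$ is surjective and, being injective, bijective. In either case the $\mathcal{R}$-class of $1$ is exactly $\mathcal{S}_X$.

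With this established, the $\mathcal{R}$-class of the maximal subgroup $\mathcal{S}_X$ is the single $\mathcal{H}$-class $\mathcal{S}_X$, which is trivially a finite union of $\mathcal{H}$-classes. Corollary \ref{maximal subgroup} then asserts that if ${Surj}_X$ (resp. ${Inj}_X$) were f-noetherian, $\mathcal{S}_X$ would be finitely generated, contradicting the well-known fact that the symmetric group on an infinite set is not finitely generated. Hence neither semigroup is f-noetherian.

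I expect the main obstacle to be the Green's-relation computation of the second paragraph, specifically the verification that a right-invertible element is necessarily bijective; the surjectivity half in ${Inj}_X$, which rests on the image-counting argument for the putative right inverse, is the most delicate point. Everything else reduces to recalling standard facts about the group of units of a monoid and about the non-finite-generation of $\mathcal{S}_X$.
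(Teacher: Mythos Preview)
Your proof is correct but follows a different path from the paper's. The paper observes (citing Gallagher's thesis) that $\mathcal{S}_X$ is a subsemigroup of each of ${Surj}_X$ and ${Inj}_X$ whose complement is a two-sided ideal, and then invokes Lemma~\ref{complement ideal}: since $\mathcal{S}_X$ is not f-noetherian (by Lemma~\ref{group}, as it is not finitely generated), neither ambient semigroup can be. Your approach instead goes through Corollary~\ref{maximal subgroup}, computing directly that the $\mathcal{R}$-class of the identity is precisely $\mathcal{S}_X$, hence a single $\mathcal{H}$-class. The two arguments are closely related under the surface---the complement being a right ideal is exactly what forces the $\mathcal{R}$-class of $1$ to lie inside $\mathcal{S}_X$---but they invoke different machinery from the paper. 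Your route has the advantage of being self-contained (you prove the needed Green's-relation fact rather than citing it), while the paper's route is shorter once the ideal fact is granted and makes the mechanism transparent: passing to the Rees quotient leaves $\mathcal{S}_X$ with a zero adjoined, which cannot be f-noetherian.
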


\begin{proof}
We note that $S_X$ is a subsemigroup of both ${Surj}_X$ and ${Inj}_X.$
It was shown in the proofs of \cite[Theorems 4.4.2 and 4.4.4]{Gallagher} that ${Surj}_X\!\setminus\!S_X$ is an ideal of ${Surj}_X$ and ${Inj}_X\!\setminus\!S_X$ is an ideal of ${Inj}_X.$
Therefore, since $S_X$ is not f-noetherian, we have that ${Surj}_X$ and ${Inj}_X$ are not f-noetherian by Lemma \ref{complement ideal}.
\end{proof}

\section{\large{Finite generation}\nopunct}

We now turn to the main theme of the paper, which is to investigate the relationship between the properties of being f-noetherian and being finitely generated.
We have already seen in the previous section that there exist f-noetherian semigroups that are non-finitely generated (indeed, uncountable).
In this section we show that the class of finitely generated semigroups is strictly contained within the class of f-noetherian semigroups.
We also prove that the properties coincide for semigroups with finitely many $\mathcal{L}$-classes.

\begin{prop}
\label{fg}
Let $S$ be a semigroup.  If $S$ is finitely generated, then it is f-noetherian.
\end{prop}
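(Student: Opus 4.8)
The plan is to exploit the finiteness of the index to produce a finite set of class representatives, and then to connect every element of $S$ to the representative of its own class by an $X$-sequence built from the finite generating set. Concretely, fix a finite generating set $A$ for $S$, let $\rho$ be a right congruence with finitely many classes $C_1, \dots, C_n$, and choose a representative $c_k \in C_k$ for each $k.$ For an element $s \in S$ write $[s]$ for the index of the class containing $s.$ I would take as candidate generating set
$$X = \bigl\{(a, c_{[a]}) : a \in A\bigr\} \cup \bigl\{(c_i a, c_{[c_i a]}) : 1 \le i \le n,\ a \in A\bigr\},$$
which is finite precisely because $A$ is finite and $\rho$ has finite index. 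Every pair in $X$ joins two $\rho$-related elements, so $X \subseteq \rho.$

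The central claim is that for every $s \in S$ the pair $(s, c_{[s]})$ is a consequence of $X$; I would prove this by induction on the length $\ell$ of a fixed expression $s = a_1 \cdots a_\ell$ of $s$ as a product of generators. The base case $\ell = 1$ is immediate from the first family of pairs in $X.$ For the inductive step, write $s = s' a_\ell$ with $s' = a_1 \cdots a_{\ell-1}$ and suppose $s' \in C_i$; by induction there is an $X$-sequence connecting $s'$ and $c_i.$ Multiplying every term of that sequence on the right by $a_\ell$ yields an $X$-sequence connecting $s = s' a_\ell$ and $c_i a_\ell$ (this is the step that uses that $\rho$ is a \emph{right} congruence and that the $X$-sequence format is closed under right multiplication). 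Finally the pair $(c_i a_\ell, c_{[c_i a_\ell]}) \in X$ supplies one more step from $c_i a_\ell$ to its representative, and since $s \,\rho\, c_i a_\ell$ we have $[c_i a_\ell] = [s],$ completing the connection from $s$ to $c_{[s]}.$

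Once the claim is established, the proposition follows quickly: if $s \,\rho\, t$ then $s$ and $t$ lie in a common class $C_k,$ so both $(s, c_k)$ and $(t, c_k)$ are consequences of $X,$ and reversing one of the two $X$-sequences and concatenating shows that $(s,t)$ is a consequence of $X.$ By the basic lemma on consequences this means $\rho$ is generated by the finite set $X,$ so $S$ is f-noetherian. The only genuinely delicate point is the bookkeeping in the definition of $X$: one needs both the ``generator'' pairs $(a, c_{[a]})$ to start the induction and the ``representative times generator'' pairs $(c_i a, c_{[c_i a]})$ to close each inductive step, and it is exactly the finiteness of the index that keeps this set finite. I do not expect a serious obstacle beyond getting this definition right and verifying that right multiplication preserves the $X$-sequence format.
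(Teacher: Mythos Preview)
Your proof is correct and follows essentially the same approach as the paper: the paper uses the identical finite generating set (written as $H=\{(x,\alpha_i):x\in X\cap C_i\}\cup\{(\alpha_i x,\alpha_j):C_i x\subseteq C_j\}$) and, rather than phrasing the argument as an induction on word length, simply writes out the explicit $H$-sequence that your induction produces. The only difference is presentational.
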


\begin{proof}
Let $X$ be a finite generating set for $S,$ 
and let $\rho$ be a right congruence of finite index on $S$ with classes $C_i, i\in I.$
For each $i\in I,$ fix $\alpha_i\in C_i.$  
We claim that $\rho$ is generated by the finite set
$$H=\{(x, \alpha_i) : i\in I, x\in X\cap C_i\}\cup\{(\alpha_ix, \alpha_j) : i, j\in I, C_ix\subseteq C_j\}.$$
Indeed, let $s\,\rho\,t$ with $s\neq t.$
We have that $s=x_1\dots x_n$ for some $x_i\in X.$  
Let $x_1\in C_{i_1}$ and, for each $j\in\{1, \dots, n-1\},$ let $C_{i_j}x_{j+1}\subseteq C_{i_{j+1}}$ and $u_j=x_{j+1}\dots x_n.$ 
We then have the following $H$-sequence connecting $s$ and $\alpha_{i_n}$:
$$s=x_1u_1, \alpha_{i_1}u_1=(\alpha_{i_1}x_2)u_2, \alpha_{i_2}u_2=(\alpha_{i_2}x_3)u_3, \dots, \alpha_{i_{n-1}}u_{n-1}=\alpha_{i_{n-1}}x_n, \alpha_{i_n}.$$ 
Similarly, there exists $i\in I$ such that $(t, \alpha_{i})$ is a consequence of $H.$
Since $s\,\rho\,t,$ we have that $i_n=i,$ so $(s, t)$ is a consequence of $H.$
\end{proof}

In the case that a semigroup has finitely many $\mathcal{L}$-classes, the converse of Proposition \ref{fg} holds.

\begin{prop}
\label{L-classes}
Let $S$ be a semigroup with finitely many $\mathcal{L}$-classes.  If $S$ is f-noetherian, then it is finitely generated.
\end{prop}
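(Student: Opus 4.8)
The plan is to exploit the connection between $\mathcal{L}$-classes and a naturally defined right congruence of finite index, and then invoke the f-noetherian hypothesis to extract a finite generating set for $S$. First I would recall that $s\,\mathcal{L}\,t$ means $S^1s=S^1t$, which is a left-invariant notion; what I actually want is the relation that encodes membership in $\mathcal{L}$-classes but interacts well with \emph{right} congruences. The key observation is that $\mathcal{L}$ is itself a right congruence: if $s\,\mathcal{L}\,t$, then $S^1s=S^1t$, and multiplying on the right gives $S^1su\subseteq S^1tu$ and vice versa, so $su\,\mathcal{L}\,tu$ for all $u\in S^1$. Since $S$ has finitely many $\mathcal{L}$-classes by hypothesis, $\mathcal{L}$ is a right congruence of finite index. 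Therefore, by the f-noetherian assumption, $\mathcal{L}$ is finitely generated, say by a finite set $X\subseteq S\times S$.

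Next I would use this finite data, together with a finite set of $\mathcal{L}$-class representatives, to build a finite generating set for $S$ as a semigroup. Let $L_1,\dots,L_n$ be the $\mathcal{L}$-classes and fix a representative $e_k\in L_k$ for each. The generating set for $S$ should consist of: the $\mathcal{L}$-class representatives $e_k$; the elements appearing in the finite relation set $X$ (more precisely, for each $(x,y)\in\overline{X}$ and each $s_i\in S^1$ that arises in an $X$-sequence, suitable pieces); and, crucially, a finite set of left multipliers that realise the $\mathcal{L}$-relation. The core idea is that if $s\,\mathcal{L}\,t$, then by Lemma~1 (the consequence lemma) there is an $X$-sequence connecting $s$ and $t$ of the form $s=x_1s_1,\ y_1s_1=x_2s_2,\ \dots,\ y_ks_k=t$ with $(x_i,y_i)\in\overline X$ and $s_i\in S^1$. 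Since $s\,\mathcal{L}\,t$ is equivalent to $S^1s=S^1t$, I want to convert this right-congruence bookkeeping into explicit left-multiplier witnesses: for each generating pair, record an element $p$ with $y=px$ or the analogous relation witnessing that $x$ and $y$ lie in the same $\mathcal{L}$-class.

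Concretely, for each arbitrary $s\in S$, I would write $s\,\mathcal{L}\,e_k$ for the representative of its class, so $s=u\,e_k$ for some $u\in S^1$, and then show that $u$ can be taken from a controlled finite set determined by $X$. The recursive/inductive step is to express an arbitrary element as a product of representatives and a bounded collection of multipliers: starting from $s$, peel off a factor to descend to a representative, using the $X$-sequence to track how $s$ differs from $e_k$ by a left factor, and show that each such left factor decomposes into finitely many fixed pieces. The finite generating set of $S$ then consists of the representatives $e_k$ together with these finitely many left-multiplier pieces and the finitely many elements occurring in $X$.

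The main obstacle I anticipate is the direction of the invariance: $\mathcal{L}$ is a \emph{left}-based relation defined by $S^1s=S^1t$, yet the machinery of finitely generated right congruences and $X$-sequences works through \emph{right} multiplication. The delicate point is to verify rigorously that $\mathcal{L}$ is genuinely a right congruence (this requires the computation $su\,\mathcal{L}\,tu$ above and must be checked carefully, since it is not the case that every Green's relation is one-sided-congruence-compatible in both senses) and then, having extracted a finite $X$ generating $\mathcal{L}$, to translate the additive right-sequence information into multiplicative left-factor witnesses that actually build up elements of $S$. I expect the bulk of the work to lie in showing that the left multipliers arising across all $\mathcal{L}$-classes can be drawn from a single finite set, so that the resulting generating set for $S$ is finite; this is where the finiteness of the number of $\mathcal{L}$-classes is used a second time, to bound the number of representatives and hence the number of distinct transition pieces.
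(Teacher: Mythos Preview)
Your approach is essentially the same as the paper's: use that $\mathcal{L}$ is a right congruence of finite index, extract a finite generating set $X$ for it, fix one left multiplier $\alpha(x,y)\in S$ with $x=\alpha(x,y)\,y$ for each $(x,y)\in\overline{X}$, and take these together with a set of $\mathcal{L}$-class representatives as the generating set for $S$.

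Two places where you over-complicate matters. First, you do \emph{not} need the elements appearing in $X$, nor any of the $s_i$'s from $X$-sequences, in your generating set; the representatives and the $\alpha(x,y)$'s suffice. Second, the ``bulk of the work'' you anticipate---showing that the left multipliers can be drawn from a single finite set---is a non-issue: you choose exactly one $\alpha(x,y)$ per pair in $\overline{X}$, and $\overline{X}$ is finite because $X$ is. The only real computation is the clean telescoping step you allude to but do not write out: given an $X$-sequence $s=x_1s_1,\ y_1s_1=x_2s_2,\ \dots,\ y_ks_k=b$ from $s$ to its class representative $b$, one has
\[
s=x_1s_1=\alpha_1(y_1s_1)=\alpha_1(x_2s_2)=\alpha_1\alpha_2(y_2s_2)=\cdots=\alpha_1\cdots\alpha_k\,b,
\]
where $\alpha_i=\alpha(x_i,y_i)$. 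This single line is the entire translation from right-sequence data to a left-factorisation, and it finishes the proof.
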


\begin{proof}
Recall that $\mathcal{L}$ is a right congruence on $S.$
Since $S$ is f-noetherian, we have that $\mathcal{L}=\langle X\rangle$ for some finite set $X\subseteq\mathcal{L}.$
For each $(x, y)\in\overline{X},$ choose $\alpha(x, y)\in S$ such that $x=\alpha(x, y)y.$
Let $L_i, i\in I,$ be the $\mathcal{L}$-classes of $S,$ and for each $i\in I$ fix $b_i\in L_i.$
We claim that $S$ is generated by the finite set 
$$A=\{\alpha(x, y) : (x, y)\in\overline{X}\}\cup\{b_i : i\in I\}.$$
Indeed, if $s\in S,$ then $s\in L_i$ for some $i\in I.$ 
Let $b=b_i.$
If $s=b,$ then $s\in A,$ so assume that $s\neq b.$
Since $s\,\mathcal{L}\,b,$ there exists an $X$-sequence
$$s=x_1s_1, y_1s_1=x_2s_2, \dots, y_ks_k=b,$$
where $(x_i, y_i)\in\overline{X}$ and $s_i\in S^1$ for $1\leq i\leq k.$
Letting $\alpha_i=\alpha(x_i, y_i)$ for $1\leq i\leq k,$ we have that
$$s=(\alpha_1y_1)s_1=\alpha_1(y_1s_1)=\alpha_1(x_2s_2)=\cdots=\alpha_1\dots\alpha_k(y_ks_k)=\alpha_1\dots\alpha_kb\in\langle A\rangle,$$
as required.
\end{proof}

\begin{remark}
\label{left simple}
In a left simple semigroup $S,$ Green's $\mathcal{L}$-relation coincides with the universal relation.
Therefore, if $S$ is finitely connected, then the same argument as the one in the proof of Proposition \ref{L-classes} shows that $S$ is finitely generated.
\end{remark}

Being f-noetherian does not imply finite generation under the weaker condition that a semigroup has finitely many $\mathcal{D}$-classes.
We demostrate this by providing an example of a $0$-bisimple inverse monoid that is f-noetherian but not finitely generated.

\begin{ex}
\label{polycyclic}
Let $X$ be an infinite set, and let $M$ be the monoid with zero defined by the presentation 
$$\langle X, X^{-1}\,|\,xx^{-1}=1, xy^{-1}=0\,(x, y\in X, x\neq y)\rangle.$$
The monoid $M$ is called the {\em polycyclic monoid} on $X,$ and is easily seen to be $0$-bisimple and inverse.
The universal right congruence on $M$ is generated by the pair $(1, 0).$
We claim that this is the only right congruence of finite index on $M.$
Indeed, if $\rho$ is a right congruence of finite index on $M,$ there exist distinct $x, y\in X$ with $x\,\rho\,y.$
But then we have that $$1=xx^{-1}\,\rho\,yx^{-1}=0,$$ so $\rho=\langle(1, 0)\rangle$ is the universal congruence.
\end{ex}

\section{\large{Completely regular semigroups}\nopunct}

In this section we consider completely regular semigroups.
A {\em completely regular semigroup} is a semigroup which is a union of groups.\par
We shall make use of the following construction.
Let $Y$ be a semilattice and let $(S_{\alpha})_{\alpha\in Y}$ be a family of disjoint semigroups, indexed by $Y,$
such that $S=\bigcup_{\alpha\in Y}S_{\alpha}$ is a semigroup.
If $S_{\alpha}S_{\beta}\subseteq S_{\alpha\beta}$ for all $\alpha, \beta\in Y,$
then $S$ is called a {\em semilattice of semigroups}, and we denote it by $S=\mathcal{S}(Y, S_{\alpha}).$\par
We have the following structure theorem for completely regular semigroups.

\begin{thm}\cite[Theorem 4.1.3]{Howie}
\label{com reg structure}
Every completely regular semigroup is a semilattice of completely simple semigroups.
\end{thm}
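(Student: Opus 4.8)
The plan is to produce the semilattice decomposition directly from Green's relations, taking the completely simple pieces to be the $\mathcal{J}$-classes of $S$. Throughout I would use the characterisation of complete regularity as the property that every $\mathcal{H}$-class is a group; equivalently, that there is a unary operation $a\mapsto a^{-1}$ with $aa^{-1}a=a$, $(a^{-1})^{-1}=a$ and $aa^{-1}=a^{-1}a$. Writing $a^{0}=aa^{-1}$ for the identity of the maximal subgroup $H_{a}$ containing $a$, the group structure of $H_{a}$ gives $a\,\mathcal{H}\,a^{2}\,\mathcal{H}\,a^{m}$ for every $m\geq 1$, so in particular $a\,\mathcal{J}\,a^{m}$. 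This single observation is what drives everything that follows.

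First I would show that $\mathcal{J}=\mathcal{D}$ and that this relation is a semilattice congruence. For the equality, complete regularity makes $S$ an epigroup, where $\mathcal{J}=\mathcal{D}$ is standard. For the congruence property I would invoke the least semilattice congruence $\eta$, which exists on any semigroup and is described by $a\,\eta\,b$ if and only if each of $a,b$ divides (two-sidedly) a power of the other; since $a\,\mathcal{J}\,a^{m}$ collapses powers, this reduces to mutual $\mathcal{J}$-divisibility, giving $\eta=\mathcal{J}$. Hence $\mathcal{J}$ is a congruence and $Y:=S/\mathcal{J}$ is a semilattice. The commutativity underlying this can also be seen concretely: $(ba)^{2}=b(ab)a\leq_{\mathcal{J}}ab$ together with $(ba)^{2}\,\mathcal{J}\,ba$ force $ba\leq_{\mathcal{J}}ab$, and symmetrically, so $ab\,\mathcal{J}\,ba$; idempotency of the quotient is just $a\,\mathcal{J}\,a^{2}$.

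Next I would identify the classes. Writing $S=\bigcup_{\alpha\in Y}S_{\alpha}$ for the partition into $\mathcal{J}$-classes, the semilattice quotient yields $S_{\alpha}S_{\beta}\subseteq S_{\alpha\beta}$ and in particular $S_{\alpha}S_{\alpha}\subseteq S_{\alpha}$, so each $S_{\alpha}$ is a subsemigroup and $S=\mathcal{S}(Y,S_{\alpha})$ is a semilattice of semigroups. Each $S_{\alpha}$ is a union of the group $\mathcal{H}$-classes it contains (since $\mathcal{H}\subseteq\mathcal{J}$), hence is itself completely regular; and since $S_{\alpha}$ is a single regular $\mathcal{J}=\mathcal{D}$-class forming a subsemigroup, a standard Green's-lemma argument shows that any two of its elements generate the whole of $S_{\alpha}$ as a two-sided ideal, so $S_{\alpha}$ is simple. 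A simple completely regular semigroup is completely simple (its idempotents are primitive; equivalently it is a Rees matrix semigroup over a group, by Rees's theorem), which completes the argument.

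The main obstacle is the congruence/semilattice step: verifying that $\mathcal{J}$ is genuinely a congruence and that its quotient is a semilattice. Commutativity and idempotency of the quotient are cheap once $a\,\mathcal{J}\,a^{m}$ is in hand, but two-sided compatibility of $\mathcal{J}$ with multiplication is the delicate point, and this is exactly what the identification $\mathcal{J}=\eta$ with the least semilattice congruence buys. After that, the facts that each class is a subsemigroup, inherits complete regularity, and is simple are comparatively routine, as is the final passage from ``simple and completely regular'' to ``completely simple''.
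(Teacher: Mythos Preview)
The paper does not give its own proof of this statement; it is quoted without proof as a classical structure theorem, with a citation to \cite[Theorem 4.1.3]{Howie}. So there is nothing in the paper to compare your argument against.

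That said, your outline is essentially the textbook proof (indeed, it is close to Howie's own treatment): on a completely regular semigroup one has $a\,\mathcal{J}\,a^{m}$ for all $m\geq 1$, whence $\mathcal{J}=\mathcal{D}$ coincides with the least semilattice congruence $\eta$, and each $\mathcal{J}$-class, being a simple union of groups, is completely simple. One small caution: the description of $\eta$ as ``each of $a,b$ divides a power of the other'' is not the correct characterisation of the least semilattice congruence on an \emph{arbitrary} semigroup (in general one needs the transitive closure of this relation, and even that requires care). It works here precisely because $a\,\mathcal{J}\,a^{m}$ already forces the relation to be transitive, so in your setting the shortcut is legitimate; but you should flag that you are using complete regularity at that point rather than presenting it as a general fact about $\eta$.
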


The following necessary and sufficient conditions for a semilattice to be finitely connected were provided in \cite{Dandan}.

\begin{prop}\cite[Corollary 5.7]{Dandan}
\label{fc semilattice}
A semilattice $Y$ is finitely connected if and only if there exists a finite set $X\subseteq Y$ such that $Y=XY^1$ and $Y$ has a zero element.
\end{prop}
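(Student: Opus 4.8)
The plan is to characterise finite connectivity of a semilattice $Y$ via the two conditions: $Y$ is finitely generated as a right-act-like structure (i.e. $Y = XY^1$ for some finite $X$), and $Y$ has a zero. Since a semilattice is commutative and idempotent, I would first observe that every right congruence is automatically a two-sided congruence, and that the natural partial order $\alpha \leq \beta \iff \alpha\beta = \alpha$ governs the multiplicative structure. Because $Y$ is a monoid only if it has an identity, but it is always a commutative monoid upon adjoining $1$ if necessary, I would work with $Y^1$ throughout and use Remark \ref{fc remark}, which says finite connectivity is equivalent to the existence of a finite $X \subseteq Y$ with $Y = XY^1$ whose associated undirected left Cayley graph of $Y^1$ on $X$ is connected. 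The goal is to show that for a semilattice this connectivity of the Cayley graph is equivalent to $Y$ having a zero element.

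For the backward direction, suppose $Y = XY^1$ for finite $X$ and $Y$ has a zero $0$. First I would note that in a semilattice $Y = XY^1$ simply means every element is below some element of $X$ in the order (since $xy^1 \leq x$), so $X$ contains the maximal elements and generates $Y$ downward under meets. The presence of a zero lets me connect any two generators: for $x, x' \in X$, the element $xx' = x \cdot x'$ lies below both, and iterating, $0 = \prod_{x \in X} x$ is reachable from every $x \in X$ by a path in the Cayley graph. The key computation is that multiplying a generator $x$ on the left by another generator $x'$ moves $x$ (thought of as a vertex) to $xx'$, and these edges assemble into a connected graph precisely because all generators have the common lower bound $0$. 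I expect this to reduce to showing the meet-semilattice generated by the finite maximal set, together with $0$, is path-connected through the edge relation, which follows since every vertex connects down to $0$.

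For the forward direction, suppose $Y$ is finitely connected, so by Remark \ref{fc remark} there is a finite $X$ with $Y = XY^1$ and connected Cayley graph. That $Y = XY^1$ with $X$ finite already gives the first condition, so the substance is producing a zero. Here I would argue that connectivity forces the finitely many elements of $X$ to be pairwise joined by $X$-sequences, and tracing such a sequence in the semilattice yields a common element below all generators; since every element of $Y$ lies below some generator, this common element is below everything, hence is a zero. The main obstacle, and the step requiring the most care, is this last implication: I must verify that the graph-theoretic connectivity genuinely forces the existence of a single bottom element rather than merely local meets, i.e. that the finitely many generators possess a global infimum that is attained in $Y$. I would handle this by showing that the product of all generators $\prod_{x \in X} x$ is a well-defined element of $Y$ (finiteness of $X$ is essential here) and that connectivity guarantees it lies below every element, making it the zero; the delicate point is ruling out the possibility that distinct connected components of downward-closed pieces prevent a single global minimum, which connectivity of the Cayley graph precisely excludes.
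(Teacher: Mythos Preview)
The paper does not actually prove this proposition; it is quoted directly from \cite[Corollary~5.7]{Dandan} and no proof appears in the text. So there is no ``paper's proof'' to compare against, and I will simply assess your sketch on its own merits.

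Your forward direction is essentially right, and the step you flag as delicate can be completed cleanly. With $z=\prod_{x\in X}x$, observe that for any Cayley-graph edge $\{s,xs\}$ one has $zs=(zx)s=z(xs)$ because $z\le x$; thus left-multiplication by $z$ collapses every edge, and connectivity of the graph (which contains the vertex $1\in Y^1$) forces $za=z\cdot 1=z$ for every $a\in Y^1$. Equivalently, working directly from a finite generating set $H$ for the universal congruence, multiply an $H$-sequence from $a$ to $z$ through by $z$ and use $zp=zq=z$ for all $p,q$ occurring in $H$.

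Your backward direction, however, contains a genuine error. You assert that $0=\prod_{x\in X}x$, but this is false in general: take $Y=\{0,1\}$ with $X=\{1\}$, where $Y=XY^1$ and $Y$ has a zero, yet $\prod_{x\in X}x=1\neq 0$. Worse, the Cayley graph of $Y^1$ with respect to this $X$ consists only of loops and is disconnected, so your plan of showing connectivity for the \emph{given} $X$ cannot succeed. The hypothesis only provides \emph{some} finite $X$ with $Y=XY^1$; nothing forces the meet of $X$ to be $0$, nor the Cayley graph on that particular $X$ to be connected. The fix is easy: either replace $X$ by $X\cup\{0\}$ (then every vertex $s$ has an edge to $0s=0$, giving connectivity immediately), or bypass the Cayley graph entirely and note that the set $\{(x,0):x\in X\}$ generates the universal right congruence, since any $a=xs$ yields $(a,0)=(xs,0s)$ as a one-step consequence of $(x,0)$.
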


We now prove that f-noetherian semilattices are finite.

\begin{prop}
\label{semilattice}
Let $Y$ be a semilattice.  If $Y$ is f-noetherian, then it is finite.
\end{prop}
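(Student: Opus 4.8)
The plan is to prove the contrapositive: an infinite semilattice $Y$ is not f-noetherian. Since every f-noetherian semigroup is finitely connected, I may assume $Y$ is finitely connected, for otherwise there is nothing to prove. By Proposition \ref{fc semilattice}, $Y$ then has a zero and there is a finite set $X$ with $Y=XY^1$; writing $\leq$ for the natural partial order ($a\leq b\iff ab=a$), this says that every element lies below some element of $X$, so $Y$ is the union of the finitely many principal ideals $\{x\in Y:x\leq x_i\}$. As $Y$ is infinite, one of these, say $\{x\in Y:x\leq e\}$, is infinite. The map $y\mapsto ye$ is a homomorphism of $Y$ onto this principal ideal (by commutativity and idempotency), so by Lemma \ref{quotient} the principal ideal is again f-noetherian. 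Hence it suffices to show that an infinite semilattice with a top element $e$ is not f-noetherian, and I shall assume from now on that $Y$ has such a top element.

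The key device is the Rees congruence $\rho$ collapsing the maximal ideal $M=Y\setminus\{e\}$ to a point; this has index $2$. I claim that if $M$ has no finite dominating set---no finite $F\subseteq M$ with $M=\{x:x\leq f\text{ for some }f\in F\}$---then $\rho$ is not finitely generated. Indeed, suppose $\rho=\langle G\rangle$ for some finite $G$ and let $K$ be the finite set of all coordinates occurring in pairs of $G$; since every nontrivial pair of $\rho$ lies in $M\times M$, we have $K\subseteq M$. Choose $b\in M$ lying below no element of $K$. The first step of any $G$-sequence starting at $b$ has the form $b=x_1s_1$ with $x_1\in K$, forcing $b\leq x_1$, a contradiction; thus $b$ is $\langle G\rangle$-related only to itself, whereas its $\rho$-class is the infinite set $M$. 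So $\rho$ is not finitely generated and $Y$ is not f-noetherian.

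It remains to treat the case where $M$ does have a finite dominating set, equivalently where $Y$ has finitely many coatoms $c_1,\dots,c_k$ with $M=\bigcup_i\{x:x\leq c_i\}$. As $Y$ is infinite, some $\{x:x\leq c_i\}$ is infinite and is a homomorphic image of $Y$ (via $y\mapsto yc_i$) with top element $c_i$, so I repeat the analysis inside it. This process either halts, yielding a non-finitely-generated congruence on an f-noetherian quotient (a contradiction), or runs forever, producing a strictly descending chain of top elements $e=t_0>t_1>t_2>\cdots$, each $t_{j+1}$ a coatom of the principal ideal below $t_j$. In the latter case I define $q\colon Y\to C$ onto the chain $C=\{t_j:j\geq 0\}\cup\{\bot\}$ by sending $y$ to the largest $t_j$ with $t_j\leq y$, and to $\bot$ if there is none; a short check shows $q$ is a surjective homomorphism, so $C$ is f-noetherian by Lemma \ref{quotient}. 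But $C$ is an infinite descending chain with top and bottom, and the index-$2$ congruence separating $\bot$ from $\{t_j\}$ is not finitely generated: by the same isolation argument (now the class $\{t_j\}$ has no smallest element), a finite generating set has coordinates bounded below by some $t_N$, and then $t_{N+1}$ is related only to itself. This contradiction finishes the proof.

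The main obstacle is reconciling finite index with non-finite-generation: keeping the index finite forces one to collapse all but finitely many classes, and the whole difficulty lies in arranging that a single infinite class cannot be connected by finitely many pairs. The isolation argument pins this down---it fails precisely when the class has no finite dominating (dually, co-dominating) subset---so the real content is the structural step, carried out by the coatom recursion, which guarantees that such a class always exists, either directly as $M$ or, after passing to the descending chain $C$, as the complement of its bottom element.
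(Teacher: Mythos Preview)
Your proof is correct, and it takes a genuinely different route from the paper's.

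Both arguments begin similarly: reduce via Proposition~\ref{fc semilattice} and pass to an infinite homomorphic image, then iterate to produce a strictly descending chain of elements each of whose principal ideals is infinite. The divergence is in how the contradiction is extracted. The paper removes finite sets $X_k$ at each stage (using Proposition~\ref{large}), keeps track of auxiliary ``exception sets'' $Z_k$, and assembles from the process a subsemilattice $U$ whose complement is shown to be an ideal; then Lemma~\ref{complement ideal} forces $U$ to be f-noetherian, while the construction guarantees $U$ has no zero, contradicting Proposition~\ref{fc semilattice}. Your argument instead passes to quotients throughout (via $y\mapsto yc$ and Lemma~\ref{quotient}) and, when the recursion runs forever, maps $Y$ onto the explicit chain $C=\{t_j\}\cup\{\bot\}$, where you directly exhibit the index-$2$ congruence that is not finitely generated. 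Your approach is arguably more transparent: it produces a concrete witness to the failure of f-noetherianity (a specific non-finitely-generated congruence of finite index), whereas the paper's argument is indirect, appealing to the absence of a zero in $U$. The paper's construction, on the other hand, avoids the case split between ``recursion halts'' and ``recursion runs forever'' by building a single infinite object $U$ in one sweep.

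One small point worth making explicit: surjectivity of your map $q$ onto $C$ (specifically, that $\bot$ is hit) uses the zero of $Y$, which you have available from Proposition~\ref{fc semilattice} and which survives the passage to $\downarrow e$; since each $\downarrow t_j$ is infinite while $\downarrow 0=\{0\}$, no $t_j$ equals $0$, so $q(0)=\bot$. Also, your ``equivalently\ldots coatoms'' remark is correct but deserves a word: a \emph{minimal} finite dominating set for $M$ necessarily consists of maximal elements of $M$, and these are precisely the coatoms of $Y$.
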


\begin{proof}
Suppose for a contradiction that $Y$ is infinite, and let $Y=Y_1.$  
By Proposition \ref{fc semilattice}, there exists a finite set $X_1\subseteq Y_1$ such that $Y_1=X_1Y_1^1.$
Choose $x_{11}\in X_1$ such that $x_{11}Y^1$ is infinite.
Let $$Z_{11}=\{z\in X_1 : (x_{11}z)Y^1\text{ is finite}\}.$$
If $Z_{11}\cup\{x_{11}\}\neq X_1,$ choose $x_{12}\in X_1\!\setminus\!(Z_{11}\cup\{x_{11}\}).$
Now let $$Z_{12}=\{z\in X_1 : (x_{11}x_{12}z)Y^1\text{ is finite}\}.$$
Notice that $Z_{11}\subset Z_{12}.$
Continue this procedure until we obtain sets $Z_1=Z_{1n_1}$ and $X_1^{\prime}=\{x_{11}, \dots, x_{1n_1}\}\subseteq X_1$ such that $X_1=Z_1\cup X_1^{\prime}.$
Let $e_1=x_{11}\dots x_{1n_1}.$  Note that $(e_1z)Y^1$ is finite for every $z\in Z_1.$\par 
Now consider the semilattice $Y_2=Y_1\!\setminus\!X_1.$  
We have that $Y_2$ is f-noetherian by Proposition \ref{large}, so there exists a finite set $X_2\subseteq Y_2$ such that $Y_2=X_2Y_2^1$ by Proposition \ref{fc semilattice}.
There exists $x\in X_2$ such that $(xe_1)Y^1$ is infinite; choose $x_{21}\in X_2$ with this property,
and let $$Z_{21}=\{z\in X_2 : (e_1x_{21}z)Y^1\text{ is finite}\}.$$
In a similar way as above we construct sets $X_2^{\prime}=\{x_{21}, \dots, x_{2n_2}\}$ and $Z_2=X_2\!\setminus\!X_2^{\prime}$ such that,
letting $e_2=e_1x_{21}\dots x_{2n_2},$ we have $e_2Y^1$ is infinite and $(e_2z)Y^1$ is finite for all $z\in Z_1\cup Z_2.$\par 
Continuing this process ad infinitum, we obtain semilattices $$Y_1\supset Y_2\supset\dots,$$
and, for each $k\in\mathbb{N},$ we have the following:
\begin{itemize}
 \item a finite set $X_k\subseteq Y_k$ such that $Y_k=X_kY_k^1$;
 \item sets $X_k^{\prime}=\{x_{k_1}\dots x_{kn_k}\}$ and $Z_k=X_k\!\setminus\!x_k^{\prime}$;
 \item an element $e_k=e_1\dots e_{k-1}x_{k1}\dots x_{kn_k}$ such that $e_kY^1$ is infinite and $(e_kz)Y^1$ is finite for every $z\in\bigcup_{i=1}^kZ_k.$
\end{itemize}
Now let $U$ be the subsemilattice of $Y$ generated by the set $\bigcup_{i=1}^{\infty}X_i^{\prime}.$
We claim that the complement $Y\!\setminus\!U$ is the ideal 
$$I=\{g\in Y : g\ngeq u\text{ for any }u\in U\}.$$
Clearly $I\subseteq Y\!\setminus\!U.$  
Now let $e\in Y\!\setminus\!U,$ and suppose that $e\not\in I,$ so $e\geq u$ for some $u\in U.$
We then have that $e\in Z_j$ and $u\in\langle\bigcup_{i=1}^kX_i^{\prime}\rangle$ for some $j, k\in\mathbb{N}.$
Letting $n=\text{max}(j, k),$ we have $e\geq ue_je_k=e_n.$
But then we have that $e_nY^1=(e_ne)Y^1$ is finite, which is a contradiction, so $e\in I$ and $Y\!\setminus\!U=I.$\par
It now follows from Lemma \ref{complement ideal} that $U$ is f-noetherian.
However, $U$ does not contain a zero, contradicting Proposition \ref{fc semilattice}, and hence $Y$ is finite.
\end{proof}

Since the semilattice $Y$ is a homomorphic image of $S=\mathcal{S}(Y, S_{\alpha}),$ Lemma \ref{quotient} and Proposition \ref{semilattice} together yield the following corollary.

\begin{corollary}
\label{finite semilattice}
Let $S=\mathcal{S}(Y, S_{\alpha})$ be a semilattice of semigroups.  If $S$ is right noetherian, then $Y$ is finite.
\end{corollary}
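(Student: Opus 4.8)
The plan is to exhibit $Y$ as a homomorphic image of $S$ and then simply chain the two cited prior results. First I would define the index projection $\pi\colon S\to Y$ sending each element of the component $S_{\alpha}$ to its index $\alpha\in Y$. This is well defined because the components $S_{\alpha}$ are pairwise disjoint, and it is surjective because each $S_{\alpha}$ is nonempty. To check that $\pi$ is a homomorphism, take $s\in S_{\alpha}$ and $t\in S_{\beta}$; the defining inclusion $S_{\alpha}S_{\beta}\subseteq S_{\alpha\beta}$ forces $st\in S_{\alpha\beta}$, whence $(st)\pi=\alpha\beta=(s\pi)(t\pi)$. Thus $Y$ is a quotient of $S$.

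With the projection in hand, the hypothesis does the rest. Since right noetherian semigroups are f-noetherian (as recorded in the Introduction), the assumption that $S$ is right noetherian gives that $S$ is f-noetherian. Applying Lemma \ref{quotient} to the surjective homomorphism $\pi$, the image $Y$ is therefore f-noetherian as well. Finally, Proposition \ref{semilattice} asserts that an f-noetherian semilattice is finite, and so $Y$ is finite, as required.

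There is essentially no obstacle beyond assembling these pieces: the only genuine content is the verification that the index map is a surjective homomorphism, which is immediate from the definition of $\mathcal{S}(Y, S_{\alpha})$. Everything else is a direct invocation of the implication that right noetherian implies f-noetherian, together with Lemma \ref{quotient} and Proposition \ref{semilattice}.
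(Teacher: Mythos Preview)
Your proof is correct and follows essentially the same route as the paper: the paper simply notes that $Y$ is a homomorphic image of $S$ and cites Lemma \ref{quotient} and Proposition \ref{semilattice}, while you additionally spell out the projection $\pi$ and make explicit the intermediate implication that right noetherian implies f-noetherian.
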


The following example shows that for a completely regular semigroup to be f-noetherian,
it is not required that every completely simple semigroup in its semilattice decomposition be f-noetherian.

\begin{ex}
\label{com reg ex}
Let $S$ be the semigroup defined by the presentation $$\langle a, b, c\,|\,ab=ba=1, ac=c^2=c\rangle.$$
We have that $S$ is a semilattice of two semigroups $S_{\alpha}=\langle a, b\rangle\cong\mathbb{Z}$ and $S_{\beta}=\{ca^i, cb^i : i\in\mathbb{N}\}$ where $\alpha>\beta.$
Now $S$ is f-noetherian since it is finitely generated.  However, $S_{\beta}$ is an infinite right zero semigroup, and hence not finitely connected by \cite[Corollary 3.6]{Dandan}.
\end{ex}

Green's relation $\mathcal{H}$ is not in general a congruence on a completely regular semigroup.
A completely regular semigroup for which $\mathcal{H}$ is a congruence is called a {\em cryptogroup}.
Examples of cryptogroups include completely simple semigroups, Clifford semigroups and bands.
We shall show that every f-noetherian cryptogroup is finitely generated.
We first prove the following technical lemma.

\begin{lemma}
\label{congruence contained in L}
Let $S=\mathcal{S}(Y, S_{\alpha})$ be a semilattice of semigroups, and suppose that $S$ has a congruence $\rho\subseteq\mathcal{L}$ with the following property:
for each $\alpha\in Y,$ if the semigroup $S_{\alpha}/\rho_{\alpha}$ (where $\rho_{\alpha}=\rho|_{S_{\alpha}}$) is f-noetherian, then it is finite.
If $S$ is f-noetherian, then it is finitely generated.
\end{lemma}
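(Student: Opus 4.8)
The plan is to show that the quotient $T=S/\rho$ is finite; once this is established, finite generation of $S$ follows quickly. First I would note that the semilattice projection $S\to Y$ is a surjective homomorphism, so $Y$ is a homomorphic image of $S$; by Lemma \ref{quotient} it is f-noetherian, and hence finite by Proposition \ref{semilattice}. Next I would observe that $\rho\subseteq\mathcal{L}$ forces every $\rho$-class to lie inside a single component $S_\alpha$: if $a\,\mathcal{L}\,b$ then, applying the projection $S\to Y$ to the relations $a\in S^1b$ and $b\in S^1a$, one gets that $a$ and $b$ map to comparable, hence equal, elements of $Y$. Consequently $\rho=\bigcup_{\alpha\in Y}\rho_\alpha$ and $T=S/\rho=\mathcal{S}(Y,T_\alpha)$, where $T_\alpha=S_\alpha/\rho_\alpha$; moreover $T$ is f-noetherian by Lemma \ref{quotient}.

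The core of the argument is to prove, by induction on the (now finite) cardinality of $Y$, that $T$ is finite. If $|Y|=1$ then $T=T_\alpha$ is f-noetherian, hence finite by the standing hypothesis on the components. For the inductive step, choose a maximal element $\alpha\in Y$. Then $J_\alpha=\bigcup_{\beta\neq\alpha}T_\beta$ is an ideal of $T$: maximality of $\alpha$ ensures that $\beta\gamma\neq\alpha$ whenever $\beta\neq\alpha$, since $\beta\gamma\leq\beta$ and $\beta\gamma=\alpha$ would force $\alpha=\beta$. The Rees quotient $T/J_\alpha$ is then precisely $T_\alpha^0$, and as a homomorphic image of $T$ it is f-noetherian; hence $T_\alpha$ is f-noetherian by Corollary \ref{adjoin zero}, and therefore finite by the hypothesis on the components.

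Having shown $T_\alpha$ finite, I would use that $T\setminus J_\alpha=T_\alpha$ is finite to conclude, via Proposition \ref{large}, that the subsemigroup $J_\alpha=\mathcal{S}(Y\setminus\{\alpha\},T_\beta)$ is itself f-noetherian. Since $Y\setminus\{\alpha\}$ is a subsemilattice of strictly smaller cardinality whose components still satisfy the standing hypothesis, the induction hypothesis gives that $J_\alpha$ is finite, whence $T=J_\alpha\cup T_\alpha$ is finite. Finally, finiteness of $T=S/\rho$ means that $\rho$ has finite index; as $\rho\subseteq\mathcal{L}$, the number of $\mathcal{L}$-classes of $S$ is at most the index of $\rho$ and so is finite, and Proposition \ref{L-classes} then yields that $S$ is finitely generated. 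The delicate point—and the step I expect to be the main obstacle—is obtaining f-noetherianity of an individual component $T_\alpha$: this cannot be done by viewing $T_\alpha$ as an ideal, since ideals need not inherit the property, and the trick is to realise $T_\alpha^0$ as a Rees quotient of $T$ by peeling off a maximal element, which is precisely why the induction on $|Y|$ is arranged this way.
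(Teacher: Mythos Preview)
Your argument is correct and follows essentially the same route as the paper: show $Y$ is finite, pass to $T=S/\rho=\mathcal{S}(Y,T_\alpha)$, prove $T$ finite by induction on $|Y|$ by peeling off a maximal component, and conclude via Proposition~\ref{L-classes}. The only cosmetic difference is that where you explicitly form the Rees quotient $T/J_\alpha\cong T_\alpha^0$ and invoke Corollary~\ref{adjoin zero}, the paper packages this step as a direct citation of Lemma~\ref{complement ideal} (whose proof is precisely that manoeuvre).
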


\begin{proof}
The semilattice $Y$ is finite by Corollary \ref{finite semilattice}.
Let $T=S/\rho.$  Then $T$ is a semilattice $\mathcal{S}(Y, T_{\alpha}),$ where $T_{\alpha}=S_{\alpha}/\rho_{\alpha},$ and $T$ is f-noetherian by Lemma \ref{quotient}. 
We prove that $T$ is finite by induction on the order of $Y.$  
If $|Y|=1,$ then $T$ is finite by assumption.
Suppose that $|Y|>1,$ and choose a maximal element $\beta\in Y.$
We have that $T_{\beta}$ is f-noetherian by Lemma \ref{complement ideal}, and hence finite by assumption.
Therefore, by Theorem \ref{large}, we have that $T\!\setminus\!T_{\beta}=\mathcal{S}(Y\!\setminus\!\{\beta\}, T_{\alpha})$ is f-noetherian. 
By induction we have that $T\!\setminus\!T_{\beta}$ is finite, and hence $T$ is finite.\par
It now follows that $S$ has finitely many $\mathcal{L}$-classes, and is hence finitely generated by Proposition \ref{L-classes}.
\end{proof}

\begin{thm}
\label{cryptogroup}
Let $S$ be a cryptogroup.  If $S$ is f-noetherian, then it is finitely generated.
\end{thm}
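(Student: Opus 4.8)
The plan is to invoke Lemma~\ref{congruence contained in L} with the congruence $\rho=\mathcal{H}$. By Theorem~\ref{com reg structure} we may write $S=\mathcal{S}(Y, S_{\alpha})$ as a semilattice of completely simple semigroups, and since $S$ is a cryptogroup $\mathcal{H}$ is a congruence on $S$; as $\mathcal{H}\subseteq\mathcal{L}$ holds in any semigroup, the only thing left to verify is the hypothesis of Lemma~\ref{congruence contained in L}: for each $\alpha\in Y$, the quotient $S_{\alpha}/\rho_{\alpha}$ (where $\rho_{\alpha}=\mathcal{H}|_{S_{\alpha}}$) should be finite whenever it is f-noetherian.

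First I would pin down the structure of $S_{\alpha}/\rho_{\alpha}$. Being a cryptogroup, $S$ is a union of its group $\mathcal{H}$-classes, so every element of $S/\mathcal{H}$ is idempotent and $S/\mathcal{H}$ is a band. Now $S_{\alpha}/\rho_{\alpha}$ is simultaneously a subsemigroup of the band $S/\mathcal{H}$ (hence itself a band) and a homomorphic image of the completely simple semigroup $S_{\alpha}$ (hence completely simple). A completely simple band is a rectangular band, so $S_{\alpha}/\rho_{\alpha}\cong I\times\Lambda$ with multiplication $(i,\lambda)(j,\mu)=(i,\mu)$. The theorem therefore reduces to a single claim: an f-noetherian rectangular band is finite.

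To prove this claim I would use the two coordinate projections of $B=I\times\Lambda$, namely $(i,\lambda)\mapsto i$ onto the left-zero semigroup on $I$ and $(i,\lambda)\mapsto\lambda$ onto the right-zero semigroup on $\Lambda$; both are surjective homomorphisms, so by Lemma~\ref{quotient} it is enough to show that infinite left-zero and right-zero semigroups are not f-noetherian. In each of these the right action of the semigroup on itself contributes nothing new when closing a relation: in a left-zero semigroup every element acts as the identity map, and in a right-zero semigroup right multiplication turns every pair into a reflexive pair. Consequently the right congruence generated by a set $X$ is just the equivalence relation generated by $X$. A finite $X$ involves only finitely many elements, so the equivalence relation it generates leaves all but finitely many elements in singleton classes; thus an equivalence relation of index $2$ on an infinite set, which necessarily has an infinite class, is a right congruence of finite index that is not finitely generated. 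Hence infinite left-zero and right-zero semigroups fail to be f-noetherian.

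Putting the pieces together, an f-noetherian rectangular band $I\times\Lambda$ has both $I$ and $\Lambda$ finite and is therefore finite, which is exactly the hypothesis needed to apply Lemma~\ref{congruence contained in L} and conclude that $S$ is finitely generated. I expect the rectangular band claim to be the crux of the argument; the delicate point there is the observation that for left- and right-zero semigroups the generated right congruence collapses to the bare equivalence closure, since this is what prevents an infinite class from arising from finitely many generators and is precisely the feature that fails for general bands (so that one cannot simply quote an ``infinite band is not f-noetherian'' fact, that being a consequence of the present theorem).
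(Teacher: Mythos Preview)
Your proof is correct and follows the same route as the paper: both apply Lemma~\ref{congruence contained in L} with $\rho=\mathcal{H}$, having identified each $S_{\alpha}/\mathcal{H}_{\alpha}$ as a rectangular band. The only difference is that the paper dispatches the finiteness of an f-noetherian rectangular band by citing the (stronger) external result \cite[Corollary~6.4]{Dandan} that a finitely connected rectangular band is finite, whereas you supply a self-contained argument via the left- and right-zero projections.
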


\begin{proof}
We have that $S$ is a semilattice $\mathcal{S}(Y, S_{\alpha})$ of completely simple semigroups by Theorem \ref{com reg structure}.
For each $\alpha\in Y,$ the quotient $S_{\alpha}/\mathcal{H}_{\alpha}$ is a rectangular band.
By \cite[Corollary 6.4]{Dandan}, any finitely connected rectangular band is finite.
Hence, we have that $S$ is finitely generated by Lemma \ref{congruence contained in L}.
\end{proof}

\begin{corollary}
\label{band}
A band $B$ is f-noetherian if and only if it is finite.
\end{corollary}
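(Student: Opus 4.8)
The plan is to combine Theorem \ref{cryptogroup} with the classical fact that every finitely generated band is finite, so that essentially all the work has already been done.

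First I would verify that every band $B$ is a cryptogroup. Since every element of a band is idempotent, $B$ is completely regular (it is the union of the trivial groups $\{x\}$, $x\in B$), and in particular its maximal subgroups are all trivial. As the band is completely regular, each $\mathcal{H}$-class is a group $\mathcal{H}$-class, and since the only idempotent in a group is its identity, each such class is a singleton. Hence $\mathcal{H}$ coincides with the equality relation on $B$. The equality relation is obviously a congruence, so $\mathcal{H}$ is a congruence and $B$ is a cryptogroup.

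With this in hand the forward implication is immediate: if $B$ is f-noetherian, then $B$ is finitely generated by Theorem \ref{cryptogroup}. It then remains only to invoke the well-known fact that every finitely generated band is finite (equivalently, that the free band on a finite set is finite, a result of Green and Rees), from which $B$ is finite. For the converse, a finite band is finitely generated and therefore f-noetherian by Proposition \ref{fg}.

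I do not anticipate a genuine obstacle here, since the substantive content has already been carried out in Theorem \ref{cryptogroup}; the only external input is the local finiteness of the variety of bands, which is entirely standard. The one point worth stating carefully is the reduction of bands to the cryptogroup case, i.e.\ the observation that $\mathcal{H}$ is trivial and hence a congruence.
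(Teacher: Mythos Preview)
Your proposal is correct and matches the paper's intended argument: the corollary is stated immediately after Theorem \ref{cryptogroup} with no separate proof, so the paper is implicitly using exactly your route (bands are cryptogroups, apply Theorem \ref{cryptogroup}, then invoke the Green--Rees fact that finitely generated bands are finite, as also noted in Table~1). Your explicit verification that $\mathcal{H}$ is the identity relation on a band, and hence a congruence, is the only detail the paper leaves to the reader.
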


\section{\large{Commutative semigroups}\nopunct}

For commutative semigroups, being (right) noetherian is equivalent to being finitely generated.
Finitely connected commutative semigroups, however, are not necessarily finitely generated.
Indeed, in Section 5 we saw that there exist infinite finitely connected semilattices.
In this section we investigate how the properties of being f-noetherian and being finitely generated relate to one another for commutative semigroups.
We begin by presenting an example of a cancellative idempotent-free commutative semigroup that is f-noetherian but not finitely generated. 

\begin{ex}
\label{commutative ex}
Let $A=\langle a\rangle\cong\mathbb{N},$ and let $B=\mathbb{N}_0\times\mathbb{Z}$ be the semigroup with multiplication given by $$(m, i)(n, j)=(m+n+1, i+j).$$
Let $S=A\cup B,$ and define a multiplication on $S,$ extending those on $A$ and $B,$ as follows:
$$a^j(m, i)=(m, i)a^j=(m, i-j).$$
Letting $b_i$ represent the generator $(0, i), i\in\mathbb{Z},$ the semigroup $S$ is defined by the presentation 
$$\langle a, b_i\,(i\in\mathbb{Z})\,|\,ab_i=b_ia=b_{i-1}, b_ib_j=b_0b_{i+j}\,(i, j\in\mathbb{Z})\rangle.$$
It is easy to see that $S$ is commutative, cancellative, idempotent-free and not finitely generated.
We now show that $S$ is f-noetherian.\par 
Let $\rho$ be a congruence of finite index on $S.$  
We have that $\rho|_A$ is generated by some pair $(a^s, a^t)$ with $s<t$; let $r=t-s.$
Now choose $m_0, n_0\in\mathbb{N}_0$ with $n_0>m_0$ such that $(n_0, 0)\,\rho\,(m_0, z)$ for some $z\in\mathbb{Z}.$
We let $$X=\{(a^s, a^t)\}\cup\{\bigl((n_0, 0), (m_0, z)\bigr)\},$$
and let $U$ denote the finite set
$$\{a^i : 1\leq i<t\}\cup\{(k, p) : 0\leq k<n_0, 0\leq q<r\}.$$
We show that $\rho$ is generated by the set $Y=X\cup\bigl(\rho\cap(U\times U)\bigr).$
We first make the following claim.
\begin{claim*}
\begin{enumerate}
 \item For each $n\geq n_0$ and $i\in\mathbb{Z},$ we have $\bigl((n, i), (n-(n_0-m_0), i+z)\bigr)$ is a consequence of $\bigl((n_0, 0), (m_0, z)\bigr).$
 \item For each $n\in\mathbb{N}_0$ and $i\in\mathbb{Z},$ there exists some $0\leq q<r$ such that $\bigl((n, i), (n, q)\bigr)$ is a consequence of $(a^s, a^t).$
\end{enumerate}
\end{claim*}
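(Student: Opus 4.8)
The two parts are the computational heart of the f-noetherian argument for this semigroup, so I would establish each by exhibiting an explicit $X$-sequence (equivalently, a short chain of applications of the generating pair), using the structure of the multiplication. The key relations I would exploit are $a^j(m,i)=(m,i-j)$ and $(m,i)(n,j)=(m+n+1,i+j)$, together with the generating pairs $(a^s,a^t)$ and $\bigl((n_0,0),(m_0,z)\bigr)$.

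For part (2), the idea is that applying the relation $(a^s,a^t)$ decreases the second coordinate by the fixed amount $r=t-s$ while fixing the first. Concretely, multiplying the pair $(a^s,a^t)$ on the right by an element $(n,i)\,a^{\ell}$-type term produces a consequence connecting $(n,i)$ to $(n,i-r)$ (up to bookkeeping with the first coordinate absorbing the ``$+1$'' shifts from the $B$-multiplication). Iterating this, I can reduce the second coordinate modulo $r$ until it lands in the residue range $0\leq q<r$. The main thing to verify carefully is that each single step is genuinely an $\overline{X}$-sequence step of the form $x_{\ell}s_{\ell}=x_{\ell+1}s_{\ell+1}$ with $s_{\ell}\in S^1$, i.e.\ that the right multiplier can be chosen so the first coordinate is preserved throughout; this is a direct check using the given presentation.

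For part (1), the generating pair $\bigl((n_0,0),(m_0,z)\bigr)$ is designed to decrease the first coordinate by $n_0-m_0$ while shifting the second by $z$. So I would multiply this pair on the right by a suitable element to connect $(n,i)$ with $\bigl(n-(n_0-m_0),\,i+z\bigr)$, again checking that the multiplier lies in $S^1$ and that the coordinate arithmetic works out from $(m,i)(n,j)=(m+n+1,i+j)$ and the hypothesis $n\geq n_0$ (which guarantees the first coordinate stays nonnegative so we remain inside $B$). The condition $n\geq n_0$ is exactly what makes the right multiplier well-defined in $B$, so I would flag that as the place where the hypothesis is used.

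The main obstacle I anticipate is purely bookkeeping: because multiplication in $B$ adds a spurious $+1$ to the first coordinate, one must choose the right-multipliers $s_{\ell}\in S^1$ precisely so that the first coordinate is held fixed (in part (2)) or decremented by exactly $n_0-m_0$ (in part (1)). I expect no conceptual difficulty once the correct multipliers are pinned down, and each claim reduces to a one-line verification per step of the sequence. I would present (2) first, since its single-step reduction is the cleaner of the two, and then handle (1) by the analogous explicit sequence.
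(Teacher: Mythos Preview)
Your plan is essentially the paper's: for (1) it multiplies $\bigl((n_0,0),(m_0,z)\bigr)$ on the right by $(n-n_0-1,i)\in B$ in a single step, and for (2) it writes $i=pr+q$, notes that $(a^s,a^{s+|p|r})$ is a consequence of $(a^s,a^t)$, and applies this once with right multiplier $(n,i+s)$ (case-splitting on the sign of $p$). Two small corrections to your outline: in (2) there is no ``$+1$'' bookkeeping at all, since the relevant action is $a^j(m,i)=(m,i-j)$, which fixes the first coordinate; and in (1) the multiplier $(n-n_0-1,i)$ lies in $B$ only when $n\geq n_0+1$, so your assertion that the hypothesis $n\geq n_0$ is ``exactly what makes the right multiplier well-defined'' is off by one---the boundary case $n=n_0$ needs a separate word (or one simply enlarges $U$ to allow $k\leq n_0$).
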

\begin{proof}
(1) We have the sequence $$(n, i)=(n_0, 0)(n-n_0-1, i),\,(m_0, z)(n-n_0-1, i)=(n-(n_0-m_0), i+z).$$
(2) We have that $i=pr+q$ for some $p\in\mathbb{Z}$ and $0\leq q<r.$
Now the pair $(a^s, a^{s+|p|r})$ is a consequence of $(a^s, a^t)$.
Therefore, if $p\geq 0,$ we obtain $(n, q)=a^{s+pr}(n, i+s)$ from $(n, i)=a^s(n, i+s),$ 
and if $p<0,$ we obtain $(n, q)=a^s(n, i+(s-pr))$ from $(n, i)=a^{s-pr}(n, i+(s-pr)).$
\end{proof}
Now let $s\,\rho\,t$ with $s\neq t.$
If $s\in A,$ then there exists $i\in\{s, \dots, t-1\}$ such that $(s, a^i)$ is a consequence of $(a^s, a^t).$
If $s=(n, i)\in B,$ then it follows from (1) of the above claim that there exist some $m_0\leq k<n_0$ and $p\in\mathbb{Z}$ 
such that $\bigl((n, i), (k, p)\bigr)$ is a consequence of $\bigl((n_0, 0), (m_0, z)\bigr),$
and by (2) there exists some $0\leq q<r$ such that $\bigl((k, i), (k, q)\bigr)$ is a consequence of $(a^s, a^t).$
Therefore, in either case, there exists some $u\in U$ such that $(s, u)$ is a consequence of $X.$
Similarly, there exists $v\in U$ such that $(t, v)$ is a consequence of $X.$
Since $(u, v)\in\rho\cap(U\times U),$ we have that $(s, t)$ is a consequence of $Y,$ as required.
\end{ex}

\begin{defn}
An {\em archimedean semigroup} is a commutative semigroup $S$ in which, for each $a, b\in S,$ there exists $n>0$ such that $a^n\leq b.$
\end{defn}

We have the following structure theorem for commutative semigroups.

\begin{thm}\cite[Theorem IV.2.2]{Grillet}
\label{archimedean decomposition}
Every commutative semigroup is a semilattice of archimedean semigroups.
\end{thm}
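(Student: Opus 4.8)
The plan is to exhibit the semilattice decomposition concretely, by constructing the least semilattice congruence rather than invoking any abstract machinery. Throughout, write $x \mid y$ to mean $y \in xS^1$; with this convention the relation $a^n \le b$ occurring in the definition of archimedean reads $b \mid a^n$, i.e. $b$ divides some power of $a$. Define a relation $\mathcal{N}$ on $S$ by declaring $a \mathrel{\mathcal{N}} b$ exactly when each of $a,b$ divides a power of the other, that is, $a \mid b^m$ and $b \mid a^n$ for some $m,n \ge 1$. The entire proof reduces to showing that $\mathcal{N}$ is a congruence, that $Y := S/\mathcal{N}$ is a semilattice, and that the $\mathcal{N}$-classes are archimedean subsemigroups satisfying $S_\alpha S_\beta \subseteq S_{\alpha\beta}$.

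First I would verify that $\mathcal{N}$ is an equivalence relation. Reflexivity and symmetry are immediate, while transitivity is the first place a computation is needed: from $a \mid b^m$ and $b \mid c^{m'}$ one raises the second divisibility to the power $m$ to obtain $b^m \mid c^{mm'}$, and composes to get $a \mid c^{mm'}$, and symmetrically in the reverse direction. Next I would check compatibility with multiplication: if $b^m = au$ then $(bc)^m = (ac)(uc^{m-1})$, so $a \mid b^m$ yields $ac \mid (bc)^m$, and commutativity makes the two-sided condition automatic. Hence $\mathcal{N}$ is a congruence. That $Y = S/\mathcal{N}$ is a semilattice then follows because $S$ is commutative and every class is idempotent: $a \mid a^2$ together with $a^2 \mid a^2$ gives $a \mathrel{\mathcal{N}} a^2$, so $[a]^2 = [a]$ in $Y$.

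It remains to analyse the classes. Writing $S_\alpha$ for the $\mathcal{N}$-class corresponding to $\alpha \in Y$, the congruence property gives $[xy] = [x][y]$, whence products of elements of $S_\alpha$ remain in $S_\alpha$ (as $\alpha^2 = \alpha$) and more generally $S_\alpha S_\beta \subseteq S_{\alpha\beta}$; thus $S = \mathcal{S}(Y, S_\alpha)$ in the sense of the construction above. To see each $S_\alpha$ is archimedean, take $a,b \in S_\alpha$; since $a \mathrel{\mathcal{N}} b$ there is $n$ with $b \mid a^n$, which is precisely $a^n \le b$.

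The main obstacle, and the one point deserving care, is to keep the witnessing quotient inside the class, so that $S_\alpha$ is archimedean as a semigroup in its own right rather than merely relative to $S$. Writing $a^n = bc$ with $c \in S^1$, I would replace $c$ by $d = ca^m$: from $\alpha = [a^n] = \alpha[c]$ one computes $[d] = [c]\alpha^m = \alpha$, so $d \in S_\alpha$, while $bd = a^{n+m}$ exhibits the required divisibility within $S_\alpha$. The remaining details are bookkeeping with the exponent manipulations established above; the only genuinely delicate steps are the transitivity of $\mathcal{N}$ and this final containment argument.
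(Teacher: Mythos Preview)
The paper does not supply a proof of this theorem at all: it is simply quoted from Grillet's book with the citation \cite[Theorem IV.2.2]{Grillet} and immediately used. So there is nothing in the paper to compare your argument against.

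That said, your proposal is correct and is precisely the standard textbook proof (essentially what one finds in Grillet). The relation $\mathcal{N}$ you define is the well-known smallest semilattice congruence on a commutative semigroup, and your verification that it is a congruence, that the quotient is a band, and that each class is archimedean in its own right is accurate. Your closing paragraph, where you push the divisibility witness $c\in S^1$ into the class $S_\alpha$ by multiplying by a power of $a$, is exactly the right manoeuvre and is the one point that is sometimes glossed over.

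One small notational wrinkle: when you write $\alpha=[a^n]=\alpha[c]$ you are implicitly treating $c$ as an element of $S$, but you allowed $c\in S^1$. If $c$ is the adjoined identity then $[c]$ is undefined; however in that case $a^n=b$ and you may simply take $d=a\in S_\alpha$, giving $bd=a^{n+1}$. This is harmless, but worth saying explicitly. Otherwise the argument is complete.
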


Below we provide a characterisation of archimedean semigroups that have an idempotent.
In general, archimedean semigroups can have a rather complex structure.
We refer the reader to \cite[Chapter IV]{Grillet} for more information.

\begin{lemma}\cite[Proposition IV.2.3]{Grillet}
\label{Grillet, archimedean with idempotent}
A commutative semigroup $S$ is archimedean with idempotent if and only if $S$ is either a group or an ideal extension of a group by a nilpotent semigroup.
\end{lemma}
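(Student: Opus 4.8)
Although this lemma is quoted from Grillet, let me indicate the argument I would give. Throughout I read the relation $a^n\le b$ in the definition of an archimedean semigroup as divisibility, namely $a^n\in bS^1$ (equivalently, $b$ divides $a^n$); this is the natural partial order that restricts to the usual order on the semilattice of archimedean components. With this reading, being \emph{archimedean} means precisely that for all $a,b\in S$ there is some $n\ge 1$ with $a^n=bt$ for a suitable $t\in S^1$.

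For the reverse implication the group case is immediate: an abelian group contains its identity as an idempotent, and any two elements divide one another, so it is archimedean. For an ideal extension $S$ of an abelian group $G$ by a nilpotent semigroup, let $e$ be the identity of $G$; this is an idempotent of $S$. First I would show that $bG=G$ for every $b\in S$: since $G$ is an ideal we have $be\in G$, and for $g\in G$ one has $bg=b(eg)=(be)g$, so left multiplication by $b$ restricted to $G$ coincides with multiplication by the group element $be$, which is a bijection of $G$. Now given $a,b\in S$, nilpotency of $S/G$ yields some $k$ with $a^k\in G$, and then $a^k\in G=bG$ gives $a^k=bg'$ for some $g'\in G$; hence $a^k\in bS^1$, which is exactly the archimedean condition.

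The substantive direction is the forward one. Suppose $S$ is archimedean and contains an idempotent $e$, and set $G=eS$. The plan is to prove that $G$ is an abelian group with identity $e$, that $G$ is an ideal of $S$, and that every element of $S$ has a power in $G$; these three facts say precisely that $S$ is an ideal extension of the group $G$ by the Rees quotient $S/G$, with $S/G$ nilpotent (the remaining possibility $S=G$ being the group alternative). That $e$ is a two-sided identity for $G$ is clear since $e(es)=es$, and $G$ is closed under products because $(es)(es')=e(ss')\in eS$. The key point is the existence of inverses, which is where the archimedean hypothesis enters: applying the condition to the pair $(e,g)$ produces $n$ with $e=e^n=gt$ for some $t\in S^1$, whence $et\in G$ satisfies $g(et)=e(gt)=e$, so $et$ is an inverse of $g$ in $G$. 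That $G=eS$ is an ideal is routine from commutativity, and applying the archimedean condition to each pair $(a,e)$ gives, for every $a\in S$, a power $a^m\in eS^1=eS=G$; hence every element of $S/G$ has a power equal to the zero, i.e. $S/G$ is nilpotent in the required sense.

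I expect the main obstacle to be the inverse step: one has to massage the divisibility witness $e=gt$, with $t$ known only to lie in $S^1$, into a genuine inverse inside $G$, and to dispose of the degenerate possibility $t=1$ (which forces $g=e$) separately. The bookkeeping around $S^1$ versus $S$, together with the decision of whether $S$ already equals its kernel $G$ (the pure group case) or is a proper ideal extension, are the only places where real care is needed; everything else is a direct verification.
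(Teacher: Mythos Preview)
The paper does not supply its own proof of this lemma; it is quoted from Grillet without argument, so there is nothing to compare your approach against. Your proof is correct and is essentially the standard one: identify the kernel $G=eS$, verify it is a group using the archimedean condition applied to $(e,g)$, check it is an ideal, and use the archimedean condition applied to $(a,e)$ to see that every element has a power in $G$.

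One terminological remark worth making explicit: what you actually establish in the forward direction is that $S/G$ is \emph{nil} (every element has a power equal to the zero), not that $(S/G)^n=0$ for some global $n$. This is the intended meaning here and is consistent with how the paper uses the word elsewhere; the stronger global nilpotency genuinely can fail for $S/G$, so it is right that you do not claim it. Your handling of the degenerate case $t=1$ in the inverse step and of the dichotomy $S=G$ versus $S\neq G$ is also fine.
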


In the following we show that finitely connected archimedean semigroups are finitely generated.
We begin with the following lemma.

\begin{lemma}
\label{nilpotent}
Let $T$ be a nilpotent semigroup.  If $T$ is finitely connected, then it is finite.
\end{lemma}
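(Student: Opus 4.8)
Recall that a nilpotent semigroup $T$ has a zero and satisfies $T^n=\{0\}$ for some $n\in\mathbb{N}$, where $T^n$ denotes the set of all products of $n$ elements of $T$; we may assume $n\geq 2$, since $n=1$ gives $T=\{0\}$, which is finite. The plan is to extract a finite set from the hypothesis of finite connectivity and then use nilpotency to show that $T$ is exhausted by short products of that set. The key input is the characterisation in Remark \ref{fc remark}: since $T$ is finitely connected, there exists a finite set $X\subseteq T$ such that $T=XT^1$. Only the equality $T=XT^1$, and not the connectivity of the Cayley graph, will be needed.

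Writing $X^i=\{x_1\cdots x_i : x_1,\dots,x_i\in X\}$ and noting that $XT^1=X\cup XT$, I would substitute the equation $T=X\cup XT$ into itself repeatedly. A straightforward induction on $k$, using associativity to rewrite $X(X^iT)$ as $X^{i+1}T$ at each step, yields
$$T=X\cup X^2\cup\cdots\cup X^k\cup X^kT$$
for every $k\geq 1$. Taking $k=n-1$ and observing that $X^{n-1}\subseteq T^{n-1}$ (as $X\subseteq T$), so that $X^{n-1}T\subseteq T^{n-1}T=T^n=\{0\}$, we obtain
$$T=X\cup X^2\cup\cdots\cup X^{n-1}\cup\{0\}.$$
Since $X$ is finite, each $X^i$ is finite (indeed $|X^i|\leq|X|^i$), so $T$ is a finite union of finite sets and hence finite.

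The argument is essentially complete once the iteration is set up, so there is no serious obstacle; the only point requiring care is the bookkeeping in the induction and the containment $X^{n-1}T\subseteq T^n$, which relies on $X\subseteq T$. An alternative route, which I would keep in reserve, is to pass to the Rees quotient $T/T^2$: this is a null semigroup, it is finitely connected as a homomorphic image of $T$ by (the finitely connected analogue of) Remark \ref{quotient remark}, and for a null semigroup $N$ the condition $N=XN^1=X\cup\{0\}$ already forces $N$ to be finite. One would then recover finiteness of $T$ from finiteness of $T\setminus T^2$ using the fact that a nilpotent semigroup is generated by $T\setminus T^2$. The direct iteration above is shorter and avoids this last generation lemma, so I would present it as the main proof.
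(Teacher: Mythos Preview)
Your proof is correct and follows essentially the same approach as the paper: both extract a finite $X$ with $T=XT^1$ from finite connectedness and then use nilpotency to show that $T$ is exhausted by bounded-length products from $X$. The paper phrases the iteration as an element-wise peeling argument by contradiction (writing $a=x_1t_1=x_1x_2t_2=\cdots$), whereas you carry it out as a direct set-equality induction, but the content is the same.
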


\begin{proof}
Since $T$ is finitely connected, there exists a finite set $X\subseteq T$ such that $T=XT^1.$
Let $U=\langle X\rangle.$ 
For each $x\in X,$ let $$m(x)=\text{min}\{n\in\mathbb{N} : x^n=0\},$$ and let $N=\prod_{x\in X}m(x).$  It can be easily shown that $|U|\leq N.$
We claim that $T=U.$  Suppose for a contradiction that $T\neq U,$ and let $a\in T\!\setminus\!U.$
We have that $a=x_1t_1$ for some $x_1\in X$ and $t_1\in T^1.$  Since $a\notin U,$ we have that $t_1\in T\!\setminus\!U.$  
By a similar argument, there exist $x_2\in X$ and $t_2\in T\!\setminus\!U$ such that $t_1=x_2t_2.$
Continuing in this way, for each $n\in\mathbb{N}$ there exist $x_1, \dots, x_n\in X$ and $t_n\in T\!\setminus\!U$ such that $a=(x_1\dots x_n)t_n.$
However, we have that $x_1\dots x_N=0$ and hence $a=0,$ which is a contradiction. 
\end{proof}

\begin{prop}
\label{archimedean}
Let $S$ be an archimedean semigroup.  If $S$ is finitely connected, then it is finitely generated.
\end{prop}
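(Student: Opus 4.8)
The plan is to extract from finite connectivity the one concrete consequence I need, namely (by Remark \ref{fc remark}) a finite set $X\subseteq S$ with $S=XS^1$, and then to argue according to whether or not $S$ contains an idempotent. Throughout I use that $S$ is commutative, so that Green's relations satisfy $\mathcal{J}=\mathcal{L}=\mathcal{R}=\mathcal{H}$ and the natural divisibility preorder ($u$ below $v$ iff $u\in vS^1$) is well behaved.

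Suppose first that $S$ is idempotent-free; I claim then that $S=\langle X\rangle$, so that $S$ is finitely generated. Let $U=\langle X\rangle$ and suppose for contradiction that some $a\in S\setminus U$ exists. Writing $a=xs$ with $x\in X$ and $s\in S^1$, and noting $s\notin U$ (else $a\in U$), I iterate $S=XS^1$ to produce, for every $n$, a factorisation $a=x_1\cdots x_ns_n$ with each $x_i\in X$ and $s_n\in S\setminus U$. Since $X$ is finite, some generator $x$ occurs infinitely often along this sequence, and commutativity then gives $a\in x^kS^1$ for every $k$. Now archimedeanness yields $N$ with $x^N\in aS^1$; taking $k=N+1$ and substituting the factorisation $a\in x^{N+1}S^1$ forces $x^N\in x^{N+1}S^1$, and, propagating by multiplication by $x$, $x^N\in x^{2N}S^1$. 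Thus $x^N=x^{2N}t$ for some $t\in S^1$, and a direct check shows $e:=x^Nt$ satisfies $e^2=x^{2N}t^2=(x^{2N}t)t=x^Nt=e$, so $e$ is an idempotent of $S$ --- contradicting our assumption. Hence $S\setminus U=\emptyset$.

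Now suppose $S$ has an idempotent. By Lemma \ref{Grillet, archimedean with idempotent}, $S$ is either a group or an ideal extension of a group $G$ by a nilpotent semigroup. In the first case $S$ is finitely generated by Lemma \ref{group}. In the second, $G$ is an ideal of $S$ with identity, and the Rees quotient $S/G$ --- a homomorphic image of $S$ --- is nilpotent and, by the finitely connected analogue of Lemma \ref{quotient} (Remark \ref{quotient remark}), finitely connected; Lemma \ref{nilpotent} then makes $S/G$ finite, so $S\setminus G$ is finite. Meanwhile the finitely connected analogue of Lemma \ref{monoid ideal} (Remark \ref{monoid ideal remark}) shows that $G$ is finitely connected, whence finitely generated by Lemma \ref{group}. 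A finite generating set for $G$ together with the finite set $S\setminus G$ then generates $S$.

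The only delicate point is the idempotent-free case, where the whole content lies in manufacturing an idempotent out of the infinite factorisation; this is exactly where archimedeanness is indispensable, since Example \ref{commutative ex} exhibits an idempotent-free, cancellative, finitely connected commutative semigroup that is not finitely generated (and which, accordingly, cannot be archimedean). I should take care that the divisibility bookkeeping --- that an element divisible by $x_1\cdots x_n$ is, after collecting repeated factors by commutativity, divisible by a high power of a single generator --- is carried out cleanly, and that the equalities of principal ideals $x^NS^1=x^{N+j}S^1$ are propagated correctly by multiplying through by $x$.
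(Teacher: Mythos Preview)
Your proof is correct. The case where $S$ has an idempotent is handled identically to the paper, via Lemma~\ref{Grillet, archimedean with idempotent}, Remark~\ref{monoid ideal remark}, Lemma~\ref{group}, Remark~\ref{quotient remark}, and Lemma~\ref{nilpotent}.

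The idempotent-free case, however, is genuinely different. The paper invokes the Tamura order with respect to a fixed element $a$ and the structure theory from \cite[Section~IV.4]{Grillet}: there is a set $M$ of $\leq_a$-maximal elements, with $a\in M$, such that $S=\{a^np:n\geq 0,\,p\in M\}$ and $S\setminus M$ is an ideal; the Rees quotient $S/(S\setminus M)$ is then nilpotent, hence finite by Lemma~\ref{nilpotent}, so $M$ is a finite generating set. Your argument bypasses this structural input entirely: starting from any finite $X$ with $S=XS^1$, you show $\langle X\rangle=S$ by manufacturing an idempotent from a hypothetical element outside $\langle X\rangle$. The key step---iterating $S=XS^1$ to force $a\in x^kS^1$ for all $k$, then combining this with $x^N\in aS^1$ from the archimedean hypothesis to get $x^N\in x^{2N}S^1$ and hence an idempotent $x^Nt$---is clean and self-contained. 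Your route is more elementary in that it does not import the Tamura decomposition from Grillet; the paper's route is more structural and gives an explicit description of the generating set (the maximal elements) rather than just the set $X$ handed to you by finite connectivity.
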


\begin{proof}
Suppose first that $S$ has an idempotent.
If $S$ is a group, then it is finitely generated by Lemma \ref{group}, so assume that $S$ is not a group. 
By Lemma \ref{Grillet, archimedean with idempotent}, there exists a group $G$ that is an ideal of $S$ such that $T=S/G$ is a nilpotent semigroup.
We have that $G$ is finitely generated by Remark \ref{monoid ideal remark} and Lemma \ref{group}, and $T$ is finite by Remark \ref{quotient remark} and Lemma \ref{nilpotent}.
It follows that $S$ is finitely generated.\par 
Now suppose that $S$ has no idempotent.  Let $a$ be a fixed element of $S.$  
The {\em Tamura order} on $S$ (with respect to $a$) is defined by $$x\leq_a y\iff x=a^ny\text{ for some }n\geq 0.$$
By \cite[Section IV.4]{Grillet}, there exists a set $M$ of maximal elements of $S$ (under $\leq_a$), where $a\in M,$
such that every element of $S$ can be written in the form $p_n=a^np$ with $n\geq 0$ and $p\in M,$ and the set $I=S\!\setminus\!M$ is an ideal.  
The Rees quotient $S/I$ is f-noetherian by Lemma \ref{quotient}.
We have that $S/I$ is nilpotent, since $S$ is an archimedean semigroup, and hence it is finite by Remark \ref{quotient remark} and Lemma \ref{nilpotent}.
Therefore, the set $M$ is finite and $S=\langle M\rangle$ is finitely generated.
\end{proof}

\begin{defn}
A commutative semigroup $S$ is said to be {\em complete} if every archimedean component of $S$ contains an idempotent.
\end{defn}

Our next result states that f-noetherian complete semigroups are finitely generated.

\begin{prop}
\label{complete}
Let $S$ be a complete semigroup.  If $S$ is f-noetherian, then it is finitely generated.
\end{prop}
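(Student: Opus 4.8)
The plan is to derive the result directly from Lemma \ref{congruence contained in L}, exactly as was done for cryptogroups in Theorem \ref{cryptogroup}, but now using the archimedean decomposition of a commutative semigroup. First I would invoke Theorem \ref{archimedean decomposition} to write $S$ as a semilattice $\mathcal{S}(Y, S_\alpha)$ of archimedean semigroups; since $S$ is complete, each component $S_\alpha$ contains an idempotent. Because $S$ is commutative, Green's relation $\mathcal{H}$ coincides with $\mathcal{L}$ and is a (two-sided) congruence, and $\mathcal{H}$-related elements lie in a common archimedean component, so I would take $\rho=\mathcal{H}$ in Lemma \ref{congruence contained in L}. The inclusion $\mathcal{H}\subseteq\mathcal{L}$ is automatic, and $\rho$ restricts to a congruence $\mathcal{H}_\alpha$ on each $S_\alpha$, so the lemma is applicable once its finiteness hypothesis is verified.

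The heart of the argument is to check that hypothesis: that whenever $S_\alpha/\mathcal{H}_\alpha$ is f-noetherian it is finite. I would do this by showing that each quotient $T_\alpha=S_\alpha/\mathcal{H}_\alpha$ is nilpotent. By Lemma \ref{Grillet, archimedean with idempotent}, $S_\alpha$ is either a group or an ideal extension of a group $G_\alpha$ by a nilpotent semigroup, where $G_\alpha$ is the maximal subgroup at the unique idempotent $e_\alpha$. The key observation is that $G_\alpha$ lies in a single $\mathcal{H}$-class of $S$: every $g\in G_\alpha$ satisfies $g\,\mathcal{H}\,e_\alpha$, so $G_\alpha$ collapses to the single point $[e_\alpha]$ in $T_\alpha$. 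Since $T_\alpha$ is again commutative, archimedean and carries the idempotent $[e_\alpha]$, its maximal subgroup is the image of $G_\alpha$ and is therefore trivial; applying Lemma \ref{Grillet, archimedean with idempotent} once more then identifies $T_\alpha$ as nilpotent (the ideal extension of a trivial group by a nilpotent semigroup, or else trivial).

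With the nilpotency of each $T_\alpha$ established, the remainder is immediate: if $T_\alpha$ is f-noetherian then it is finitely connected, so Lemma \ref{nilpotent} forces it to be finite. Thus the hypotheses of Lemma \ref{congruence contained in L} are satisfied, and that lemma yields that $S$ is finitely generated. I expect the main obstacle to be the middle paragraph, namely verifying cleanly that each $S_\alpha/\mathcal{H}_\alpha$ is nilpotent: this requires correctly locating the group ideal $G_\alpha$ inside one $\mathcal{H}$-class and confirming that passing to the $\mathcal{H}$-quotient annihilates precisely the group part while preserving the archimedean-with-idempotent structure needed to reapply Lemma \ref{Grillet, archimedean with idempotent}.
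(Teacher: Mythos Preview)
Your proposal is correct and follows essentially the same route as the paper: take $\rho=\mathcal{H}$ in Lemma~\ref{congruence contained in L}, observe that each $T_\alpha=S_\alpha/\mathcal{H}_\alpha$ is nilpotent, and then invoke Lemma~\ref{nilpotent}. The paper simply asserts the nilpotency of $T_\alpha$ without justification, whereas you supply an argument via Lemma~\ref{Grillet, archimedean with idempotent}; note that a slightly more direct line is available---since $G_\alpha$ is an ideal and $S_\alpha/G_\alpha$ is nilpotent, any product of sufficiently many elements of $S_\alpha$ lies in $G_\alpha$, hence maps to $[e_\alpha]$ in $T_\alpha$---but your reapplication of the structure lemma works just as well.
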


\begin{proof}
We have that $S$ is a semilattice of archimedean semigroups $\mathcal{S}(Y, S_{\alpha})$ where each $S_{\alpha}$ contains an idempotent.
Note that $\mathcal{H}$ is a congruence on $S.$
For each $\alpha\in Y,$ we have that $T_{\alpha}=S_{\alpha}/\mathcal{H}_{\alpha}$ is a nilpotent semigroup. 
Therefore, if $T_{\alpha}$ is f-noetherian, then it is finite by Lemma \ref{nilpotent}.
It follows from Lemma \ref{congruence contained in L} that $S$ is finitely generated.
\end{proof}

\begin{remark}
There exist finitely connected complete semigroups that are not finitely generated; e.g. infinite finitely connected semilattices. 
\end{remark}

We now exhibit an example of an uncountable commutative semigroup that is f-noetherian.

\begin{ex}
Let $A=\langle a\rangle\cong\mathbb{N}.$ 
Let $B$ be the set of all infinite bounded sequences of non-negative integers, and define $xy=0$ for all $x, y\in B,$ where $0$ denotes the sequence whose every term is $0$;
note that $B$ is a null semigroup.
We denote a sequence $(x_1, x_2, \dots)$ by $(x_n).$
Let $S=A\cup B,$ and define a multiplication on $S,$ extending those on $A$ and $B,$ as follows:
$$a^i(x_n)=(x_n)a^i=(y_n),$$ where $y_n=\text{max}(x_n-i, 0)$ for each $n\in\mathbb{N}.$
It it can be easily seen that, with this multiplication, $S$ is an uncountable commutative semigroup.  We now show that $S$ is f-noetherian.\par
Notice that $(x_n)=a^i(x_n+i),$ and $a^i(x_n)=0$ if and only if $i\geq\text{max}\{x_n : n\in\mathbb{N}\}.$
Let $\rho$ be a congruence of finite index on $S.$  
Suppose first that there exists $i\in\mathbb{N}$ such that $a^i\,\rho\,0.$  
Let $n$ be minimal such that $a^n\,\rho\,0$; we then have that $\{a^i\}$ is a singleton $\rho$-class for every $i<n.$
We claim that $\rho$ is generated by the pair $(a^n, 0).$  
Indeed, for any $s\in\{a^i : i>n\}\cup B,$ there exists $t\in A\cup B$ such that $s=a^nt,$ so we obtain $(s, 0)$ by applying the pair $(a^n, 0),$ and the claim follows.\par 
Now suppose that there are no $i\in\mathbb{N}$ such that $a^i\,\rho\,0.$  
We have the $\rho|_A$ is generated by a pair $(a^m, a^n)$ with $m<n.$
We claim that $\rho$ is generated by $(a^m, a^n).$
Clealy it is enough to show that, for every $x\in B,$ the pair $(x, 0)$ is a consequence of $(a^m, a^n).$
So, let $x=(x_n)\in B$ and let $t=\text{max}\{x_n : n\in\mathbb{N}\}.$  There exists $q\in\mathbb{N}$ such that $t\leq qk,$ where $k=n-m.$
Since $x=a^m(x_n+m),$ $0=a^{m+qk}(x_n+m),$ and $(a^m, a^{m+qk})$ is a consequence of $(a^m, a^n),$ it follows that $(x, 0)$ is a consequence of $(a^m, a^n),$ as required.
\end{ex}

For cancellative commutative semigroups, being countable is a necessary condition for being f-noetherian.

\begin{prop}
Let $S$ be a cancellative commutative semigroup.  If $S$ is f-noetherian, then it is countable.
\end{prop}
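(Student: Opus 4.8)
The plan is to pass to the group of fractions of $S$ and to show that it is countable. Since $S$ is commutative and cancellative, it embeds in its group of fractions $G$, an abelian group in which every element can be written as a quotient $st^{-1}$ with $s,t\in S$. Identifying $S$ with its image, we have $G=\{st^{-1}:s,t\in S\}$, so $|G|\leq|S\times S|$ and $S\hookrightarrow G$; for infinite $S$ this gives $|G|=|S|$. Hence it suffices to prove that $G$ is countable.

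The key observation is that being f-noetherian already forces the universal right congruence on $S$ to be finitely generated. Indeed, the universal right congruence has a single class, so it has index $1$, and by hypothesis it is therefore generated by some finite set $X=\{(s_i,t_i):1\leq i\leq m\}$ (equivalently, $S$ is finitely connected). I would then let $K$ be the subgroup of $G$ generated by the finite set $\{s_it_i^{-1}:1\leq i\leq m\}$ and prove that $K=G$.

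The heart of the argument, and the step I expect to need the most care, is showing $G\subseteq K$. Given $s,t\in S$, finite generation of the universal congruence provides an $X$-sequence $s=x_1u_1,\ y_1u_1=x_2u_2,\ \dots,\ y_ku_k=t$ with $(x_j,y_j)\in\overline{X}$ and $u_j\in S^1$. Working inside $G$, each elementary step satisfies $(x_ju_j)(y_ju_j)^{-1}=x_jy_j^{-1}\in K$ (since $K$ is a subgroup, the direction of each pair in $\overline{X}$ is immaterial), so consecutive terms lie in the same coset of $K$; because the right endpoint of one step is literally equal to the left endpoint of the next, the coset is preserved along the whole sequence, giving $st^{-1}\in K$. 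As every element of $G$ has the form $st^{-1}$, this yields $G=K$.

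Finally, $G=K$ is a finitely generated abelian group, hence countable, and therefore $S\subseteq G$ is countable. I would remark that cancellativity is used only to form $G$, which is consistent with the earlier uncountable f-noetherian commutative example being non-cancellative. The only real obstacle is the coset-propagation bookkeeping in the third paragraph; the passage to the group of fractions and the countability of a finitely generated abelian group are routine. (Note that this approach in fact shows the stronger statement that $G$ is finitely generated, of which countability of $S$ is an immediate consequence.)
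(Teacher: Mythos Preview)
Your argument is correct and takes a genuinely different---and considerably more elementary---route than the paper's own proof. The paper proceeds structurally: it invokes the archimedean decomposition $S=\mathcal{S}(Y,S_{\alpha})$, uses Corollary~\ref{finite semilattice} to conclude that $Y$ is finite (a step that genuinely requires the full f-noetherian hypothesis, via Proposition~\ref{semilattice}), and then argues by induction on $|Y|$. The inductive step splits into two cases according to whether the top archimedean component contains an idempotent, and brings in Sch\"utzenberger groups (Proposition~\ref{Schutzenberger}) in one case and the Tamura order in the other. Your approach bypasses all of this machinery: passing to the group of fractions $G$, you observe that the coset-propagation along an $X$-sequence shows $st^{-1}$ lies in the subgroup generated by $\{s_it_i^{-1}\}$, so $G$ is a finitely generated abelian group. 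This is not only much shorter, it actually uses strictly less of the hypothesis---only finite connectedness, not f-noetherian---and delivers the stronger conclusion that the group of fractions of $S$ is finitely generated (a fact one can verify directly for Example~\ref{commutative ex}, where $G\cong\mathbb{Z}^2$). The paper's approach, by contrast, stays within the structural framework developed in Sections~5 and~6 and illustrates how Propositions~\ref{Schutzenberger} and~\ref{archimedean} combine; but for the bare statement of the proposition, your argument is the cleaner one.
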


\begin{proof}
By Theorem \ref{archimedean decomposition} we have that $S$ is a semilattice of archimedean semigroups $\mathcal{S}(Y, S_{\alpha}).$ 
Now $Y$ is finite by Corollary \ref{finite semilattice}.
We prove that $S$ is countable by induction on the order of $Y.$   
If $|Y|=1,$ then $S$ is finitely generated, and hence countable, by Proposition \ref{archimedean}.
Suppose that $|Y|>1,$ and choose a maximal element $\beta\in Y.$
We have that $S_{\beta}$ is f-noetherian by Lemma \ref{complement ideal}, and hence finitely generated by Proposition \ref{archimedean}.\par
Suppose that $S_{\beta}$ contains an idempotent.  
Consider $T=S/\mathcal{H}=\mathcal{S}(Y, T_{\alpha}),$ where $T_{\alpha}=S/\mathcal{H}_{\alpha}.$
From the proof of Proposition \ref{archimedean}, we have that $S_{\beta}$ is an ideal extension of a group by a finite nilpotent semigroup;
hence $T_{\beta}$ is finite.
Therefore, by Theorem \ref{large}, we have that $T\!\setminus\!T_{\beta}=\mathcal{S}(Y\!\setminus\!\{\beta\}, T_{\alpha})$ is f-noetherian. 
By induction we have that $T\!\setminus\!T_{\beta}$ is countable, and hence $T$ is countable, so $S$ has countably many $\mathcal{H}$-classes.
Now, for every $\mathcal{H}$-class $H,$ the Sch{\"u}tzenberger group $\Gamma(H)$ is finitely generated by Proposition \ref{Schutzenberger};
therefore, since $|H|=|\Gamma(H)|,$ we have that $H$ is countable.
Hence, we have that $S,$ being a countable union of countable sets, is countable.\par
Now suppose that $S_{\beta}$ has no idempotent, and fix an element $a\in S_{\beta}.$
Recall from the proof of Proposition \ref{archimedean} that there exists a finite set $M$ of maximal elements in $S_{\beta}$ under the Tamura order $\leq_a,$ where $a\in M,$
such that every element of $S_{\beta}$ can be written in the form $p_n=a^np$ with $n\geq 0$ and $p\in M.$
We now define a congruence $\rho$ on $S$ by
$$s\,\rho\,t\iff s, t\in S_{\alpha}\text{ for some }\alpha\in Y, a^ms=a^nt\text{ for some }m, n\geq 0.$$
Let $T=S/\rho.$  Letting $\rho_{\alpha}$ denote $\rho$ restricted to $S_{\alpha},$ we have that $T=\mathcal{S}(Y, T_{\alpha}),$ where $T_{\alpha}=S_{\alpha}/\rho_{\alpha}.$
It is clear that $T_{\beta}$ is finite.  The same argument as above proves that $T$ is countable, so $S$ has countably many $\rho$-classes.
Now consider a $\rho$-class $C,$ and fix $c\in C.$
Since $S$ is countable, for each $s\in S$ there exists some unique $n_s\geq 0$ such that $a^{n_s}s=c$ or $s=a^{n_s}c.$
We therefore have an injection $$C\to\mathbb{N}_0\times\mathbb{N}_0, s\mapsto
\begin{cases}
 (n_s, 0)\text{ if }a^{n_s}s=c,\\
 (0, n_s)\text{ if }s=a^{n_s}c.
\end{cases}$$
It follows that each $\rho$-class $C$ is countable, and hence $S$ is countable.
\end{proof}

\section{\large{Simple and $0$-simple semigroups}\nopunct}

In this section we consider simple and $0$-simple semigroups.
We have already seen in Example \ref{polycyclic} that the properties of being f-noetherian and being finitely generated do not coincide for the class of $0$-simple semigroups.
On the other hand, it was observed in Remark \ref{left simple} that f-noetherian left simple semigroups are finitely generated.
We shall show that this is not the case for the class of right simple semigroups by considering Baer-Levi semigroups.\par 
So, let $X$ be an infinite set, and let $S$ be the set of all injective mappings $\alpha : X\to X$ such that $|X\!\setminus\!X\alpha|$ is infinite.
Under composition of mappings $S$ is a semigroup, called the {\em Baer-Levi semigroup} on $X$.
It turns out that Baer-Levi semigroups are right cancellative, right simple and idempotent-free \cite[Theorem 8.2]{C&P}.

\begin{thm}
\label{Baer-Levi}
The Baer-Levi semigroup $S$ on an infinite set $X$ is f-noetherian. 
\end{thm}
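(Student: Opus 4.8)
The plan is to prove that the Baer-Levi semigroup $S$ on an infinite set $X$ has exactly one right congruence of finite index, namely the universal relation, which is trivially finitely generated. This is the same strategy used for $\mathcal{I}_X$ in Proposition \ref{symmetric inverse}, for $M$ in Example \ref{polycyclic}, and implicitly in Proposition \ref{cyclic diagonal act}: rather than directly bound the number of generators of an arbitrary finite-index right congruence, I would show that no proper finite-index right congruence can exist at all, so that f-noetherian-ness holds vacuously beyond the universal congruence.

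First I would take an arbitrary right congruence $\rho$ of finite index $n$ on $S$ and aim to derive that $\rho$ is universal. The natural tool is the Pigeonhole Principle applied to a carefully chosen family of $n+1$ elements of $S$: if I can find elements $\alpha_1, \dots, \alpha_{n+1}\in S$ that are forced by pigeonhole to satisfy $\alpha_i\,\rho\,\alpha_j$ for some $i\neq j$, and if I can then right-multiply this relation by some $\gamma\in S^1$ to collapse any two prescribed elements of $S$, I will have shown $\rho$ is universal. The obstacle here, compared with $\mathcal{I}_X$, is that $S$ has no zero and no identity — being idempotent-free and right simple — so I cannot simply produce the pair $(1,0)$. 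Instead I must exploit right simplicity: for any $s, t \in S$ there exists $\gamma$ with $s\gamma = t$ (indeed $aS = S$ for all $a$ since $S$ is right simple), so collapsing one nontrivial pair and closing under right multiplication suffices to reach every pair, provided the collapsed pair can be ``transported''.

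The key step I expect to be the crux is manufacturing the initial coincidence. Here I would use that $S$ is \emph{right} cancellative but \emph{not} left cancellative, and that each $\alpha\in S$ omits an infinite subset of $X$. Concretely, I would choose $n+1$ injections $\alpha_1, \dots, \alpha_{n+1}$ with a common ``tail'': arrange that they all agree on some cofinite-in-image portion but differ by a permutation-like adjustment, so that after pigeonhole gives $\alpha_i\,\rho\,\alpha_j$, right-multiplying by a suitable $\gamma\in S$ chosen to erase the difference (using the infinite deficiency of the range to reinject) yields $\beta\,\rho\,\beta'$ for a pair $\beta, \beta'$ that I can then spread to all of $S\times S$. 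The careful design of the $\alpha_i$ and the collapsing element $\gamma$ — ensuring $\gamma$ genuinely lands in $S$, i.e. remains injective with infinite complementary range — is the delicate combinatorial bookkeeping, and it is where the argument will stand or fall.

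Once the universal congruence is shown to be the unique finite-index right congruence, f-noetherian-ness is immediate, since the universal right congruence on $S$ is generated by any single pair $(s,t)$ with $s\neq t$: right simplicity guarantees that from one nontrivial pair every other pair is a consequence. I would close by remarking that, combined with Remark \ref{left simple}, this establishes the ``Right simple'' row of Table \ref{long}, exhibiting a right simple f-noetherian semigroup that is not finitely generated (as $S$ is uncountable).
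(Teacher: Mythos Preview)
Your overall strategy --- show that the universal right congruence is the only right congruence of finite index on $S$, via pigeonhole on a family of carefully chosen injections --- is exactly the paper's. The gap is in the ``spreading'' step, and it is a real one. Your closing claim that ``right simplicity guarantees that from one nontrivial pair every other pair is a consequence'' is false: every group is right simple, yet a single pair $(a,b)$ generates only the right congruence corresponding to the cyclic subgroup $\langle ab^{-1}\rangle$. Concretely, right simplicity tells you that for any $s$ there is $\gamma$ with $\alpha_i\gamma=s$, but then $\alpha_j\gamma$ is \emph{determined}, so you only learn that each $s$ is $\rho$-related to one particular companion, not to every element. The ``transport'' you allude to is the entire content of the argument, and right simplicity alone does not deliver it.

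Worse, your proposed choice of the $\alpha_i$ --- maps that ``agree on some cofinite-in-image portion but differ by a permutation-like adjustment'' --- actively fails. If $\alpha_i$ and $\alpha_j$ agree off a finite set $F\subseteq X$, then $\alpha_i\gamma$ and $\alpha_j\gamma$ also agree off $F$ for every $\gamma$, so the right congruence $\langle(\alpha_i,\alpha_j)\rangle$ is contained in the proper right congruence $\{(\theta,\phi):\theta|_{X\setminus F}=\phi|_{X\setminus F}\}$ and cannot be universal. The paper goes in the opposite direction: it partitions $X=\bigcup_{i=1}^\infty X_i$ into infinitely many pieces each of cardinality $|X|$ and takes $\alpha_i:X\to X_i$ to be bijections with \emph{pairwise disjoint} images. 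Disjointness of $X_i$ and $X_j$ is precisely what lets one choose $\gamma\in S$ with $\gamma|_{X_i}$ and $\gamma|_{X_j}$ specified independently; the paper exploits this (via an intermediate claim that $(\theta,\phi)\in\langle(\alpha_i,\alpha_j)\rangle$ whenever $|X\setminus(X\theta\cup X\phi)|=\infty$, and then a reduction to that case) to connect an arbitrary pair in a sequence of length at most three.
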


\begin{proof}
Partition $X$ as $X=\bigcup_{i=1}^{\infty}X_i,$ where the $X_i$ are disjoint subsets of $X,$ each with the same cardinality as $X,$ and define bijections $\alpha_i : X\to X_i.$
We prove that, for any distint $i, j\in\mathbb{N},$ the pair $(\alpha_i, \alpha_j)$ generates the universal right congruence.
It then follows that the universal right congruence is the only right congruence of finite index on $S,$ and $S$ is f-noetherian.\par 
So, let $\rho$ be the right congruence on $S$ generated by the pair $(\alpha_i, \alpha_j).$  
We make the following claim.
\begin{claim*}
If $\theta, \phi\in S$ such that $|X\!\setminus\!(X\theta\cup X\phi)|=\infty,$ then $(\theta, \phi)\in\rho.$
\end{claim*}
\begin{proof}
Choose $\gamma\in S$ such that $\gamma|_{X_i}=\alpha_i^{-1}\theta$ and $(X\!\setminus\!X_i)\gamma\subseteq X\!\setminus\!(X\theta\cup X\phi),$
and choose $\delta\in S$ such that $\delta|_{X_i}=\alpha_i^{-1}\alpha_j\gamma$ and $\delta|_{X_j}=\alpha_j^{-1}\phi.$
We then have a sequence $$\theta=\alpha_i\gamma, \alpha_j\gamma=\alpha_i\delta, \alpha_j\delta=\phi,$$
so $(\theta, \phi)\in\rho.$
\end{proof}
Returning to the proof of Theorem \ref{Baer-Levi}, let $\theta, \phi\in S.$
If $|X\phi\!\setminus\!X\theta|<\infty,$ then we have that $|X\!\setminus\!(X\theta\cup X\phi)|=\infty,$ so $(\theta, \phi)\in\rho$ by the above claim.
Suppose now that $|X\phi\!\setminus\!X\theta|=\infty.$
Choose $\gamma\in S$ such that $\gamma|_{X_i}=\alpha_i^{-1}\theta$ and $(X\!\setminus\!X_i)\gamma\subseteq X\phi\!\setminus\!X\theta,$ and let $\theta^{\prime}=\alpha_j\gamma.$
We have that $\theta=\alpha_i\gamma,$ so $(\theta, \theta^{\prime})\in\rho.$
Since $X\theta^{\prime}\subseteq X\phi,$ we have that $|X\!\setminus\!(X\theta^{\prime}\cup X\phi)|=|X\!\setminus\!X\phi|=\infty,$
and hence $(\theta^{\prime}, \phi)\in\rho$ by the above claim.
It now follows by transitivity that $(\theta, \phi)\in\rho.$ 
Therefore, we have that $S\times S\subseteq\rho,$ and hence $\rho=S\times S,$ as required.
\end{proof}

\begin{remark}
\label{0-simple}~\par
\begin{enumerate}
\item Adjoining a zero to a Baer-Levi semigroup yields a right $0$-simple semigroup that is f-noetherian but not finitely generated.
Left $0$-simple f-noetherian semigroups, however, are finitely generated by Proposition \ref{L-classes}.
\item Finitely connected left $0$-simple semigroups are not in general finitely generated.
Indeed, any group with a zero adjoined is finitely connected.
\item Any $0$-simple right noetherian semigroup is finitely generated by \cite[Thm. 3.2]{Hotzel}.  
However, there exist finitely generated left/right $0$-simple semigroups that are not right noetherian; 
e.g. the monoid $G^0$ where $G$ is any finitely generated group with a non-finitely generated subgroup.
\end{enumerate}
\end{remark}

\begin{remark}
A semigroup $S$ is said to be {\em pseudo-finite} if there exists a finite set $X\subseteq S\times S$ and some $n\in\mathbb{N}$ such that for any $a, b\in S,$
there exists an $X$-sequence of length at most $n$ connecting $a$ and $b.$
Clearly this property is stronger than that of being finitely connected.
It was asked in \cite[Open Problem 8.10]{Dandan} whether every pseudo-finite semigroup contains a completely simple ideal.
It can be seen from the proof of Theorem \ref{Baer-Levi} that Baer-Levi semigroups are pseudo-finite, answering this question in the negative.
\end{remark}

It was shown in \cite[Corollary 3.6]{Dandan} that every finitely connected completely simple semigroup is finitely generated,
so we have the following generalisation of Lemma \ref{group}.

\begin{thm}
\label{completely simple}
The following are equivalent for a completely simple semigroup $S$:
\begin{enumerate}
\item $S$ is f-noetherian;
\item $S$ is finitely connected;
\item $S$ is finitely generated.
\end{enumerate}
\end{thm}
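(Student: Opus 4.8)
The plan is to prove the three-way equivalence by closing a cycle of implications $(3)\Rightarrow(1)\Rightarrow(2)\Rightarrow(3)$, each step of which is either a general fact about f-noetherian semigroups or a result already available. The substance of the theorem lies entirely in locating the property of being f-noetherian between finite generation and finite connectedness, and observing that for completely simple semigroups these collapse to a single property; complete simplicity will be invoked at exactly one place.

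For $(3)\Rightarrow(1)$ I would simply apply Proposition \ref{fg}, which asserts that every finitely generated semigroup is f-noetherian. This holds for arbitrary semigroups, so the hypothesis of complete simplicity plays no role in this step.

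For $(1)\Rightarrow(2)$ I would use the general observation, recorded immediately after the definition of finitely connected, that every f-noetherian semigroup is finitely connected: the universal right congruence has a single class and hence is of finite index, so f-noetherianity forces it to be finitely generated, which is precisely the definition of being finitely connected. Again this reasoning is valid for all semigroups.

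The only implication that genuinely exploits the hypothesis is $(2)\Rightarrow(3)$, and for this I would cite \cite[Corollary 3.6]{Dandan}, which states exactly that a finitely connected completely simple semigroup is finitely generated. Composing the three implications yields the stated equivalence. The main obstacle is essentially absent here: the one piece of real content is the cited result of Dandan, Gould, Quinn-Gregson and Zenab, and the present theorem is a packaging of it together with the elementary facts above, showing that for completely simple semigroups the three finiteness conditions coincide and so generalising Lemma \ref{group}.
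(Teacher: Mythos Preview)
Your proposal is correct and matches the paper exactly: the paper does not even give a separate proof environment for this theorem, but simply introduces it with the sentence ``It was shown in \cite[Corollary 3.6]{Dandan} that every finitely connected completely simple semigroup is finitely generated, so we have the following generalisation of Lemma \ref{group},'' relying on Proposition~\ref{fg} and the trivial implication $(1)\Rightarrow(2)$ for the remaining steps. Your write-up makes these implicit steps explicit, but the content is identical.
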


Finitely connected completely $0$-simple semigroups are not in general finitely generated by \cite[Corollary 6.10]{Dandan}.
However, the properties of being f-noetherian and being finitely generated coincide for completely $0$-simple semigroups.

\begin{thm}
\label{completely 0-simple}
Let $S$ be a completely $0$-simple semigroup.  Then $S$ is f-noetherian if and only if it is finitely generated.
\end{thm}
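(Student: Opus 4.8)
The backward implication is Proposition \ref{fg}, so assume $S$ is f-noetherian. By the Rees theorem \cite{Howie} I may take $S=M^0[G;I,\Lambda;P]$: the non-zero elements are triples $(i,g,\lambda)$ (with $i\in I$, $g\in G$, $\lambda\in\Lambda$), and $(i,g,\lambda)(j,h,\mu)=(i,g\,p_{\lambda j}\,h,\mu)$ when $p_{\lambda j}\neq 0$ and $=0$ otherwise. The $\mathcal{R}$-classes are indexed by $I$ and the $\mathcal{L}$-classes by $\Lambda$. My plan is to prove that $I$ and $\Lambda$ are both finite; once $\Lambda$ is finite, $S$ has finitely many $\mathcal{L}$-classes and Proposition \ref{L-classes} yields finite generation. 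I write $\mathrm{col}(x)$ for the third coordinate of a non-zero element $x$, and use repeatedly that a non-zero product keeps the first coordinate of its left factor and takes the third coordinate of its right factor.

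First I would show $I$ is finite. As $S$ is f-noetherian it is finitely connected, so by \cite[Proposition 3.6]{Dandan} (Remark \ref{fc remark}) there is a finite $X\subseteq S$ with $S=XS^1$. Writing a non-zero $(i,g,\lambda)$ as $xs$ with $x\in X$ and $s\in S^1$, the factor $x$ is non-zero and, since the left factor fixes the first coordinate of a non-zero product, $x$ lies in the $\mathcal{R}$-class indexed by $i$. Thus $X$ meets every $\mathcal{R}$-class, forcing $I$ finite.

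The crux is to show $\Lambda$ is finite, and here I expect the real work. Suppose not. Assign to each $\lambda$ its zero-pattern $Z_\lambda=\{i\in I:p_{\lambda i}=0\}$; as $I$ is finite there are only finitely many patterns, so infinitely many columns share a single pattern. Let $\rho$ be the relation whose classes are $\{0\}$ and the sets $\{(i,g,\lambda):Z_\lambda=Z\}$, one per occurring pattern $Z$. I would check that $\rho$ is a right congruence: for non-zero $s,t$ with $Z_{\mathrm{col}(s)}=Z_{\mathrm{col}(t)}$ and any $c=(j,h,\mu)$, the entries $p_{\mathrm{col}(s),j}$ and $p_{\mathrm{col}(t),j}$ vanish together, so $sc$ and $tc$ are either both $0$ or both non-zero with third coordinate $\mu$, hence $\rho$-related. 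Thus $\rho$ is a right congruence of finite index (at most $2^{|I|}+1$ classes) in which $\{0\}$ is a singleton class.

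Finally I would show $\rho$ is not finitely generated, contradicting f-noetherianity. If $\rho=\langle X\rangle$ with $X$ finite, let $C$ be the finite set of third coordinates occurring in non-zero entries of pairs of $X$. The key sublemma is a confinement property: if $s$ is non-zero with $\mathrm{col}(s)=\lambda\notin C$ and $s$ is not $\rho$-related to $0$, then every $t$ with $s\,\rho\,t$ has $\mathrm{col}(t)=\lambda$. Indeed, in an $X$-sequence from $s$ to $t$ no term is $0$ (else $s\,\rho\,0$), so the third coordinate is defined throughout; an induction along the sequence shows it stays equal to $\lambda$, since a step using the multiplier $1$ would place a generator entry in column $\lambda\in C$, while every step with a multiplier in $S$ preserves the third coordinate. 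Now choose distinct $\lambda,\lambda'\notin C$ in the infinite common-pattern class (possible as that class is infinite and $C$ finite), and fix any $i\in I$. Then $(i,1,\lambda)\,\rho\,(i,1,\lambda')$ by construction, yet confinement forces their third coordinates to coincide, a contradiction. Hence $\rho$ is not finitely generated, so $\Lambda$ is finite, and Proposition \ref{L-classes} completes the proof.
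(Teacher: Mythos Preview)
Your proof is correct and follows essentially the same route as the paper's: define the finite-index right congruence $\rho$ whose classes are determined by the zero-pattern of the column index (your $Z_\lambda$ is the paper's $\theta_j$), and then track the third coordinate through an $X$-sequence to see that a finite generating set cannot link two columns outside the finite set $C$ (the paper's $J_0$) sharing the same pattern. The only cosmetic differences are that you prove $I$ finite directly from $S=XS^1$ rather than citing \cite[Corollary~6.10]{Dandan}, and you package the column-tracking step as a separate ``confinement'' sublemma.
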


\begin{proof}
We only need to prove the direct implication.
Let $S=\mathcal{M}^0(G; I, J; P).$  Since $S$ is finitely connected, we have that $I$ is finite by \cite[Corollary 6.10]{Dandan}.
For each $j\in J,$ define a map $\theta_j : I\to\{1, 0\}$ by $$i\theta_j=
\begin{cases} 
   1 & \text{if }p_{ji}\in G\\
   0 & \text{if }p_{ji}=0.
\end{cases}$$
We define a right congruence $\rho$ on $S$ by setting $0\,\rho\,0$ and
$$(i_1, g_1, j_1)\,\rho\,(i_2, g_2, j_2)\iff\theta_{j_1}=\theta_{j_2}.$$
Since $I$ is finite, there are only finitely many maps of the form $\theta_j$, so $\rho$ has finite index.  
Therefore, since $S$ is f-noetherian, $\rho$ is generated by a finite set $X.$\par
We claim that the index set $J$ is finite, so that $S$ has finitely many $\mathcal{L}$-classes, and is hence finitely generated by Proposition \ref{L-classes}.
Indeed, suppose that $J$ is infinite, and let $J_0$ be the finite set of elements of $J$ that appear in $X.$ 
Choosing $i\in I$ and distinct elements $j, j^{\prime}\in J\!\setminus\!J_0$ such that $(i, 1_G, j)\,\rho\,(i, 1_G, j^{\prime}),$
we have an $X$-sequence
$$(i, 1_G, j)=x_1s_1, y_1s_1=x_2s_2, \dots, y_ks_k=(i, 1_G, j^{\prime}).$$
Since $\bigl((i, 1_G, j), 0\bigr)\notin\rho$ and $j\not\in J_0,$ it is easy to see, reading from left to right, 
that for each $i\in\{1, \dots, k\}$ the element $s_i$ is in $S\!\setminus\!\{0\}$ and its third coordinate is $j.$ 
But then $j=j^{\prime},$ so we have a contradiction, and hence $J$ is finite.
\end{proof}

\section{\large{Concluding remarks and open problems}\nopunct}

In this paper we have shown that the class of finitely generated semigroups is strictly contained within the class of f-noetherian semigroups.
Furthermore, we have investigated, for various classes of semigroups, whether the property of being f-noetherian is equivalent to being finitely generated, and the results are summarised in Table 1.
We have not been able to answer the following question.

\begin{prob}
Is every f-noetherian completely regular semigroup finitely generated?
\end{prob}

Building on the work of this paper, a natural progression would be to attempt to classify those f-noetherian semigroups that lie in certain important classes 
(beyond those for which being f-noetherian coincides with finite generation), such as commutative or inverse.
This appears to be a difficult task, particularly due to the presence of uncountable examples.
Cancellative commutative semigroups that are f-noetherian, however, were shown to be countable, so perhaps this would be a good starting point.\par
Another possible direction for future research would be to consider alternative conditions for a general semigroup or monoid to be f-noetherian.
It was shown in \cite{Dandan} that, for a monoid $M,$ 
the universal right congruence on $M$ being generated by a finite set $X$ is equivalent to the undirected left Cayley graph of $M$ with respect to $X$ being connected,
and is also equivalent to $M$ satisfying the homological finiteness property of being {\em type right-$FP_1$} \cite[Theorem 3.10]{Dandan}.
This leads us to ask whether the property of being f-noetherian for monoids can be similarly described in graph-theoretic or homological terms.

\section*{Acknowledgments}
The author would like to thank his supervisor, Professor Nik Ru{\v s}kuc, for his advice and guidance during the writing of this paper, and EPSRC for financial support.

\vspace{1em}


\begin{thebibliography}{99}
\bibitem{Bergman}
G. Bergman.  Problem list from \textit{Algebras, Lattices and Varieties}: A conference in honour of Walter Taylor, University of Colorado, 15-18 August, 2005.  
\textit{Algbra Universalis}, 55:509-526, 2006.
\bibitem{Budach}
L. Budach.  Strukter Noetherescher kommutativer Halbgruppen.  \textit{Monatsberichte Deutschen Akad. Wiss. Berlin}, 6:85-88, 1964.
\bibitem{C&P}
A. Clifford and G. Preston.  \textit{The Algebraic Theory of Semigroups: Volume 2}.  American Mathematical Society, 1967.
\bibitem{Gould}
V. Gould.  Coherent monoids.  \text{J. Australian Math. Soc.}, 53:166-182, 1992.
\bibitem{Dandan}
Y. Dandan, V. Gould, T. Quinn-Gregson and R. Zenab.  Semigroups with finitely generated universal left congruence.  \text{Monatschefte f{\"u}r Mathematik}, 2019.
\bibitem{Gallagher}
P. Gallagher.  \textit{On the finite generation and presentability of diagonal acts, finitary power semigroups and Sch{\"u}tzenberger products}.  
PhD Thesis, University of St Andrews, 2005.
\bibitem{Gallagher1}
P. Gallagher.  On the finite and non-finite generation of diagonal acts.  \textit{Comm. Algebra}, 34:3123-3137, 2006.
\bibitem{Gallagher2}
P. Gallagher and N. Ru{\v s}kuc.  Generation of diagonal acts of some semigroups of transformations and relations.  \textit{Bull. Australian Math. Soc.}, 72:139-146, 2005.
\bibitem{Grillet}
P. A. Grillet.  \textit{Semigroups: An Introduction to the Structure Theory}.  Marcel Dekker, Inc., 1995.
\bibitem{Hotzel}
E. Hotzel.  On semigroups with maximal conditions.  \textit{Semigroup Forum}, 11:337-362, 1975.
\bibitem{Howie}
J. Howie.  \textit{Fundamentals of Semigroup Theory}.  Clarendon Press, Oxford, 1995.
\bibitem{Kilp}
M. Kilp, U. Knauer and A. Mikhalev.  \textit{Monoids, Acts and Categories}.  Walter de Gruyter, 2000.
\bibitem{Kozhukhov1}
I. Kozhukhov.  On semigroups with minimal or maximal condition on left congruences.  \textit{Semigroup Forum}, 21:337-350, 1980.
\bibitem{Kozhukhov2}
I. Kozhukhov.  Semigroups with certain conditions on congruences.  \textit{J. Math. Sciences}, 114:1119-1126, 2003.
\bibitem{Lallement}
G. Lallement.  \textit{Semigroups and Combinatorial Applications}.  Wiley, 1979.
\bibitem{Miller1}
C. Miller.  Generators and presentations for direct and wreath products of monoid acts.  \textit{Semigroup Forum}, 2018.
\bibitem{Miller2}
C. Miller and N. Ru{\v s}kuc.  An introduction to presentations of monoid acts: quotients and subacts.  \textit{Comm. Algebra}, 47:782-799, 2019.
\bibitem{Miller3}
C. Miller and N. Ru{\v s}kuc.  Right noetherian semigroups, \textit{Internat. J. Algebra Comput.}, to appear.
\bibitem{Normak} 
P. Normak.  On Noetherian and finitely presented acts (in Russian). \textit{Tartu Ul. Toimetised}, 431:37-46, 1977.
\bibitem{Redei}
L. R{\'e}dei.  Theorie der Endlich Erzeugbaren Kommutativen Halbgruppen.  \textit{B. G. Teubner Verlagsgesellshaft, Leipzig}, 1963.
\bibitem{Schutzenberger}
M. Sch{\"u}tzenberger.  $\mathcal{D}$-repr{\'e}sentations des demi-groupes.  \textit{C. R. Acad. Sci. Paris}, 244:1994-1996, 1957.
\end{thebibliography}
\end{document}